\documentclass{scrartcl}
\usepackage{ws}

\title{\large \(p\)-SATURATIONS OF WELTER'S GAME \\
 AND THE IRREDUCIBLE REPRESENTATIONS \\
 OF SYMMETRIC GROUPS}
\author{\large Yuki Irie \\
\normalsize Graduate School of Science, Chiba University}
\date{}

\hypersetup{
 pdfauthor={Yuki Irie},
 pdftitle={p-Saturations of Welter's Game and the Irreducible Representations of Symmetric Groups},
 pdfcreator={Emacs 24.5.1 (Org mode 8.3.3)}, 
 pdflang={English}}
\begin{document}

\maketitle

\begin{abstract}
We establish a relation between the Sprague-Grundy function $\sg$ of a \(p\)-saturation of Welter's game and the degrees of the ordinary irreducible representations of symmetric groups.
In \ypass{this game}, a position can be viewed as a partition $\lambda$.
Let $\rho^\lambda$ be the irreducible representation of $\Sym(\size{\lambda})$ indexed by \ypass{$\lambda$}.
For every prime $p$, we show the following results:
(1) the degree of \ypass{$\rho^\lambda$} is prime to \(p\) if and only if $\sg(\lambda) = \size{\lambda}$;
(2) the restriction of \ypass{$\rho^\lambda$} to $\Sym(\sg(\lambda))$ has an irreducible component with degree prime to \(p\).
Further, for every integer $p$ greater than 1, we obtain an explicit formula for $\sg(\lambda)$. 
\end{abstract}

\section{Introduction}
\label{sec:orgheadline9}
\subsection{Welter's game}
\label{sec:orgheadline1}
Welter's game is played with a finite number of coins.
\ypass{These coins} are on a semi-infinite strip of squares numbered \(0, 1, 2, \ldots\)
with no two coins on the same square.
\ypass{This game} has two players.
\ypass{They} alternately move a coin to an empty square with a lower number.
The first player that is not able to move loses.
We now consider the position \(X\) when \ypass{the coins} are on the squares numbered \(x^1, x^2, \ldots, x^\ccm\).
Welter \cite{welter-theory-1954} shows that \ypass{its} Sprague-Grundy number \(\sg(X)\) (see Section \ref{orgtarget1}) 
can be expressed as
\begin{equation}
\label{orgtarget2}
 \sg(X) = x^1 \oplus_2 \cdots \oplus_2 x^m \oplus_2 \left(\bigoplusp[i < j][][2] \fN_2(x^i - x^j) \right),
\end{equation}
where \(\oplus_2\) is binary addition without carry and \(\fN_2(x) = x \oplus_2 (x - 1)\).
Note that \ypass{the position $X$} can be represented both by the \(\ccm\)-set \(\Set{x^1, \ldots, x^\ccm} \subset \NN\)
and by the \(\ccm\)-tuple \((x^1, \ldots, x^\ccm) \in \NN^\ccm\),
where \(\NN\) is the set of all non-negative integers. 
Throughout this paper,
we use \ypass{the set representation}, unless otherwise specified.

\ypass{Welter's game} can also be played with a Young diagram \cite{sato-game-1968}.
Let \(\sigma\) be the permutation of \(\set{1, 2, \ldots, \ccm}\) such that  \(x^{\sigma(1)} > x^{\sigma(2)} > \cdots > x^{\sigma(\ccm)}\).
Let \(\lambda(X)\) be the partition \((x^{\sigma(1)} - \ccm + 1, x^{\sigma(2)} - \ccm + 2, \ldots, x^{\sigma(\ccm)})\).
We identify \ypass{$\lambda(X)$} with its Young diagram 
\[
\Set{ (i, j) \in \ZZ^2 : 1 \le i \le \ccm,\; 1 \le j \le x^{\sigma(i)} - \ccm + i}.
\]
As a result, moving a coin corresponds to removing a hook.
Sato \cite{sato-game-1968, sato-Mathematical-1970, sato-maya-1970} obtains (\ref{orgtarget2}) independently.
In addition, he finds that \(\sg(X)\) can be expressed in a form
similar to the hook-length formula for the degrees of the irreducible representations of symmetric groups.
Kawanaka \cite{kawanaka-GAMES-} points out that
\(\sg(X)\) can also be expressed using the 2-core tower of \(\lambda(X)\).\footnote{Kawanaka \cite{kawanaka-GAMES-} also generalizes Welter's game and obtains an efficient algorithm to find winning moves.}

The purpose of this paper is to establish a relation between 
the Sprague-Grundy function of a \(p\)-saturation of Welter's game
and the degrees of the irreducible representations of symmetric groups.

\subsection{\(p\)-Saturations}
\label{sec:orgheadline2}
Let \(p\) be an integer greater than 1.
For each \(\ccm \in \NN\), let \(\cW^\ccm\) denote \ypass{Welter's game} with \(\ccm\) coins.

To define \ypass{\(p\)-saturations} of \(\cW^m\), 
we introduce a game \(\cW^\ccm_{p, \cck}\) called \(\cW^m\) with \(p\)-index \(\cck\),
where \(\cck\) is a positive integer.
This game comes from Moore's \(\text{Nim}_\cck\) (Nim with index \(\cck\)) \cite{moore-generalization-1910} and Flanigan's \(\text{Rim}_\cck\).\footnote{Players can move at most \(k - 1\) coins in \(\text{Nim}_{\cck}\) and \(\text{Rim}_{\cck}\).
\(\text{Rim}_\cck\) is introduced  in an unpublished paper \cite{flanigan-Nim-1980}.
In Section 3, we show that \(p\)-saturations of Nim have the same Sprague-Grundy function as \(\text{Rim}_p\).}
A position in \(\cW^\ccm_{p, \cck}\) is the same as in \(\cW^\ccm\).
Let \(X\) and \(Y\) be two positions \((x^1, \ldots, x^\ccm)\) and \((y^1, \ldots, y^\ccm)\) in \(\cW^\ccm\).
Let \(\dist(X, Y)\) denote the Hamming distance between \(X\) and \(Y\), that is, \(\dist(X, Y) = \ssize{\set{i : 1 \le i \le \ccm,\, x^i \neq y^i}}\).
In \(\cW^\ccm_{p, \cck}\), players can move from \(X\) to \(Y\) if and only if 
\begin{enumerate}
\item \(0 < \dist(X, Y) < \cck\),
\item \(y^i \le x^i\) for \(1 \le i \le \ccm\),
\item \(\displaystyle\ord\left(\sum_{i = 1}^{\ccm} x^i - y^i\right) = \min \Set{\ord(x^i - y^i) : 1 \le i \le \ccm}\),
\end{enumerate}
where \(\ord(x)\) is the \(p\)-adic order of \(x\),
that is, \(\ord(x) = \max \set{L \in \NN : p^L \mid x}\) if \(x \neq 0\), and \(\ord(0) = \infty\).
For example, in \(\cW_{2, 3}^2\), they can move from \((4, 3)\) to \((0, 1)\), but cannot move from \((4, 3)\) to \((1, 0)\).
In \ypass{the set representation}, they can move from \(\Set{3, 4}\) to \(\Set{0, 1}\).

\ypass{The game} \(\cW_{p, k}^\ccm\) is called a \emph{\(p\)-saturation} of \(\cW^\ccm\) if 
it has the same Sprague-Grundy function as \(\cW_{p, \ccm + 1}^\ccm\).
The smallest such \(\cck\) is called the \emph{\(p\)-saturation index} of \(\cW^\ccm\),
and denoted by \(\sat_p(\cW^\ccm)\).
By definition, if \(\cch \ge \sat_p(\cW^\ccm)\), then
\(\cW_{p, h}^\ccm\) also has the same Sprague-Grundy function as \(\cW_{p, \ccm + 1}^\ccm\).
\ypass{The formula} (\ref{orgtarget2}) implies that \(\sat_2(\cW^\ccm) = 2\) for every positive integer \(\ccm\).
In general, we can show that \(\sat_p(\cW^\ccm)\) is at least \(\min(p, \ccm + 1)\),
but we do not know its exact value for \(3 \le p \le \ccm\) (see Section \ref{orgtarget3}).

\subsection{Main results}
\label{sec:orgheadline7}
\label{orgtarget4}

We begin by introducing some definitions.
Let \(X\) be a position in \ypass{Welter's game}, and let \(L\) be a non-negative integer.
For each \((i, j) \in \lambda(X)\), the \emph{hook}  \(H_{i, j}(X)\) is defined by
\[
 H_{i, j}(X) = \Set{(i', j') \in \lambda(X) : (i' \ge i \tand j' = j) \tor  (i' = i \tand j' \ge j)}.
\]
\ypass{A hook} is called a \emph{\(p^L\)-hook} if its length (size) is a multiple of \(p^L\).
Let \(\btower{X}[L]\) denote the remainder of the number of \ypass{\(p^L\)-hooks} in \(\lambda(X)\) divided by \(p\).
We define
\begin{equation}
\label{orgtarget5}
  \btower{X} = \sum_{L \in \NN} \btower{X}[L] p^L.
\end{equation}
Let \(\oplus_p\) and \(\ominus_p\) be \(p\)-ary addition and subtraction without carry, respectively.
For each \(x \in \ZZ\), let \(\fN_p(x) = x \ominus_p (x - 1)\) .
\begin{thm}
 \comment{Thm.}
\label{sec:orgheadline3}
\label{orgtarget6}
Let \(X\) be a position \(\set{x^1, \ldots, x^\ccm}\) in a \(p\)-saturation of \(\cW^\ccm\).
Then there exists a position \(Y\) such that \(\lambda(Y) \subset \lambda(X)\) and \(\size{\lambda(Y)} = \sg(Y) = \sg(X)\).
Further, 
\begin{equation}
\label{orgtarget7}
\begin{split}
  \sg(X) &= \btower{X}\\
  &= x^1 \oplus_p \cdots \oplus_p x^\ccm \ominus_{p} \left (\bigoplusp[i < j][][p] \fN_p(x^i - x^j) \right) \\
  &= \bigopluspp[(i, j) \in \lambda(X)][][p] \fN_p\left(\size{H_{i, j}(X)}\right).
\end{split}
\end{equation}
 
\end{thm}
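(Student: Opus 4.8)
The plan is to write $g(X)$ for the common value that the three displayed formulas are supposed to equal, and to split the statement into two essentially independent parts: a combinatorial part, that the three expressions for $g(X)$ agree, and a game-theoretic part, that $\sg(X) = g(X)$ and that the position $Y$ exists. For the latter it suffices to work in the game $\cW^\ccm_{p,\ccm+1}$, since this is a $p$-saturation of $\cW^\ccm$ and, by definition, every $p$-saturation has the same Sprague-Grundy function.

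For the combinatorial part I would start from the standard description of the hook lengths of $\lambda(X)$ in terms of its beta-set $\set{x^1,\dots,x^\ccm}$: the multiset of hook lengths is $\Set{x^i - c : 1 \le i \le \ccm,\; 0 \le c < x^i,\; c \notin \set{x^1, \dots, x^\ccm}}$. Since $\fN_p(d) = d \ominus_p (d - 1)$ and the operations $\oplus_p$, $\ominus_p$ turn $\NN$ into a group (isomorphic to a direct sum of copies of $\ZZ/p$), the sum $\bigoplusp[0 \le c < x^i][][p] \fN_p(x^i - c)$ telescopes to $x^i$; subtracting off the terms with $c \in \set{x^1,\dots,x^\ccm}$ then turns the hook expression into $x^1 \oplus_p \cdots \oplus_p x^\ccm \ominus_p (\bigoplusp[i < j][][p] \fN_p(x^i - x^j))$ (after ordering the coins so that $x^1 > \cdots > x^\ccm$). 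For the remaining equality, note that $\fN_p(n) = 1 + p + \cdots + p^{\ord(n)}$, so the $p^L$-digit of the hook expression counts, modulo $p$, the hooks whose length is divisible by $p^L$; that is exactly $\btower{X}[L]$, so the hook expression equals $\btower{X}$.

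For the game-theoretic part I would argue by induction on positions, checking the two Sprague-Grundy conditions in $\cW^\ccm_{p,\ccm+1}$: (a) no legal move $X \to Y$ has $g(Y) = g(X)$, and (b) for every $r$ with $0 \le r < g(X)$ some legal move $X \to Y$ has $g(Y) = r$. For (a), put $L_0 = \min_i \ord(x^i - y^i)$, which is finite since $X \neq Y$; expanding $g(X) \ominus_p g(Y)$ via the coin expression and working one $p$-adic digit at a time, every difference $x^i - y^i$ is divisible by $p^{L_0}$, so nothing below digit $L_0$ interferes, each pair term $\fN_p(x^i - x^j)$ and $\fN_p(y^i - y^j)$ agrees up to digit $L_0$, and condition (3) of the move (the balance $\ord(\sum_i(x^i - y^i)) = L_0$) forces the $p^{L_0}$-digit of $g(X) \ominus_p g(Y)$ to be $\sum_i(x^i-y^i)/p^{L_0}$ modulo $p$, which is nonzero. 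For (b) one has to actually produce the move: working from the most significant digit at which $g(X)$ and $r$ differ downward, choose which coins to lower and by how much so that this digit drops by the right amount while all lower digits can be made arbitrary, keeping condition (3) satisfied throughout; the freedom to move many coins at once (the index $\ccm+1$) is essential here, and the bookkeeping runs parallel to the analysis of Moore's $\text{Nim}_\cck$ and $\text{Rim}_p$ in Section 3.

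Finally, to produce $Y$: writing $g(X) = \btower{X} = \sum_L \btower{X}[L]\, p^L$ with each $\btower{X}[L] < p$, and using that the number of $p^L$-hooks of a partition equals, modulo $p$, the total number of cells on level $L$ of its $p$-core tower, I would shrink the $p$-core tower of $\lambda(X)$ level by level — replacing the level-$L$ partitions by sub-partitions of total size exactly $\btower{X}[L]$ (which are automatically $p$-cores, being of size smaller than $p$) — and let $\lambda(Y)$ be the partition reconstructed from the smaller tower. Then $\btower{Y} = \btower{X}$, so $\sg(Y) = g(Y) = \sg(X)$, while $\size{\lambda(Y)} = \sum_L \btower{X}[L]\, p^L = \btower{X} = \sg(Y)$. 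I expect step (b) to be the main obstacle — verifying that every smaller Grundy value is genuinely attained, since condition (3) excludes most multi-coin moves and the required move has to be exhibited explicitly — with the containment $\lambda(Y) \subseteq \lambda(X)$, i.e. that shrinking the tower levelwise really does yield a sub-diagram, the secondary point requiring care.
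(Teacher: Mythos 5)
Your combinatorial part (the agreement of the three closed formulas) and your step (a) (no legal move preserves the value) do track the paper: the telescoping beta-set computation is exactly how the paper proves the equality of the right-hand sides, and the digit-$L_0$ analysis is its option criterion, though your claim that \emph{each} pair term $\fN_p(x^i-x^j)$, $\fN_p(y^i-y^j)$ agrees up to digit $L_0$ is not quite right (individual pairs can change $p$-adic order at digit $L_0$; only the aggregate count of pairs with order $\ge L_0$ is preserved, via $\ssize{X_R}=\ssize{Y_R}$ for $R\in\ZZ_p^{L_0}$).

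The genuine gap is precisely the two places you flag and then wave through. For step (b), the analogy with Nim/Rim breaks down: in Nim each coin contributes independently to $\oplus_p$, so lower digits can be prescribed freely, but in Welter's game the correction term $\bigoplusp[i<j][][p]\fN_p(x^i-x^j)$ couples the coins and condition (3) forbids most multi-coin moves, so "choose which coins to lower so that all lower digits can be made arbitrary" is not a construction, and no direct digit-by-digit move-building argument is known. The paper instead reduces (b) to the existence of a \emph{descendant} with $\btower{}$-value $\btower{X}-1$ (Lemma \ref{orgtarget31}, which applies the induction hypothesis to the components $X_r$ of the $p$-adic decomposition and the Nim lemma to the tower digits, not to the coins), and the existence of that descendant is the hard content (A1)/(A2) of Theorem 1.1$'$, proved over Sections 4--6 with $p^H$- and $p^*$-options, the condition (P0), peak digits, and congruent $p^*$-paths. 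Your construction of $Y$ has the same problem: a partition is indeed determined by its $p$-core tower, but it is not a routine fact that shrinking the tower levelwise to total sizes $\btower{X}[L]$ yields a partition \emph{contained in} $\lambda(X)$ -- the interaction between levels (borrows, when a $p^0$-option exists, etc.) is exactly what (P0) and the peak-digit machinery control, and for prime $p$ assertion (A1) is already of the strength of Macdonald's theorem combined with the hook-length formula (and the theorem is claimed for all integers $p>1$, so one cannot fall back on representation theory). So the proposal assumes, in both step (b) and the construction of $Y$, essentially the statement being proved.
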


\comment{Macdonald's result}
\label{sec:orgheadline4}
\label{orgtarget8}
Suppose that \(p\) is a prime.
Let \(X\) be a position in \ypass{a \(p\)-saturation} of Welter's game,
and let \(\rho^X\) be the irreducible representation of the symmetric group \(\Sym(\size{\lambda(X)})\) indexed by \(\lambda(X)\).
By Macdonald's result \cite{macdonald-degrees-1971},
we see that
the degree of \ypass{$\rho^X$} is prime to \(p\)
if and only if \(\btower{X} = \size{\lambda(X)}\). 
From Theorem \ref{orgtarget6} we obtain the following corollary.
\begin{cor}
 \comment{Cor.}
\label{sec:orgheadline5}
\label{orgtarget9}
Let \(X\) be a position in a \(p\)-saturation of Welter's game.
If \(p\) is a prime, then the following assertions hold:
\begin{enumerate}
\renewcommand{\labelenumi}{\textnormal{(\arabic{enumi})}}
\item The degree of $\rho^X$ is prime to $p$ if and only if $\sg(X) = \size{\lambda(X)}$.
\item The restriction of $\rho^X$ to $\Sym(\sg(X))$ has an irreducible component with degree prime to \(p\), where $\Sym(0) = \Sym(1)$.
\end{enumerate}
 
\end{cor}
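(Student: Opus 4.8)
The plan is to deduce both assertions directly from Theorem~\ref{orgtarget6} together with Macdonald's criterion recalled above: the degree of $\rho^X$ is prime to $p$ if and only if $\btower{X} = \size{\lambda(X)}$. Since $\btower{X}$ depends only on the partition $\lambda(X)$, this criterion is really a statement about partitions and may be applied to any position. For assertion~(1), Theorem~\ref{orgtarget6} gives $\sg(X) = \btower{X}$, so $\sg(X) = \size{\lambda(X)}$ holds if and only if $\btower{X} = \size{\lambda(X)}$, which by Macdonald's criterion is equivalent to the degree of $\rho^X$ being prime to $p$; this is (1).

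For assertion~(2), I would first invoke the iterated branching rule for symmetric groups: for a partition $\mu \subseteq \lambda(X)$, the multiplicity of $\rho^\mu$ in the restriction of $\rho^X$ to $\Sym(\size{\mu})$ is the number of standard Young tableaux of skew shape $\lambda(X)/\mu$, which is a positive integer precisely because $\mu \subseteq \lambda(X)$. Hence it is enough to produce a partition $\mu \subseteq \lambda(X)$ with $\size{\mu} = \sg(X)$ whose associated irreducible representation has degree prime to $p$. Take $\mu = \lambda(Y)$ for the position $Y$ furnished by Theorem~\ref{orgtarget6}, so that $\lambda(Y) \subseteq \lambda(X)$ and $\size{\lambda(Y)} = \sg(Y) = \sg(X)$ (in particular $\sg(X) \le \size{\lambda(X)}$, so restricting to $\Sym(\sg(X))$ makes sense). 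Applying Theorem~\ref{orgtarget6} to $Y$, which is again a position in the same $p$-saturation, gives $\sg(Y) = \btower{Y}$, hence $\btower{Y} = \size{\lambda(Y)}$, and Macdonald's criterion shows the degree of $\rho^Y$ is prime to $p$. Since $\size{\lambda(Y)} = \sg(X)$, the representation $\rho^Y$ is an irreducible constituent, of degree prime to $p$, of the restriction of $\rho^X$ to $\Sym(\sg(X))$, which is (2). The degenerate cases $\sg(X) \in \set{0, 1}$ are covered as well: there $\lambda(Y)$ is the empty partition or a single box, and $\rho^Y$ is the trivial, degree~$1$, representation of $\Sym(0) = \Sym(1)$.

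I do not expect a genuine obstacle here: essentially all of the content is already packaged in Theorem~\ref{orgtarget6} --- both the existence of $Y$ and the identity $\sg = \btower$ --- so the corollary is a short deduction. The only points needing care are stating the iterated branching rule correctly so that the constituent $\rho^Y$ really does appear (which holds because $\lambda(Y) \subseteq \lambda(X)$), and checking the small cases $\sg(X) \le 1$ against the convention $\Sym(0) = \Sym(1)$.
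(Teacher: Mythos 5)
Your proposal is correct and follows exactly the route the paper intends: assertion (1) is the combination of $\sg(X)=\btower{X}$ from Theorem \ref{orgtarget6} with Macdonald's criterion, and assertion (2) uses the descendant $Y$ with $\lambda(Y)\subset\lambda(X)$ and $\size{\lambda(Y)}=\sg(Y)=\sg(X)$ supplied by the same theorem, Macdonald's criterion applied to $Y$, and the branching rule. The paper leaves this deduction implicit, and your write-up (including the check of the cases $\sg(X)\le 1$ against the convention $\Sym(0)=\Sym(1)$) fills in precisely the intended steps.
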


\comment{Connect}
\label{sec:orgheadline6}
Let \(X\) be a position in a \(p\)-saturation of Welter's game.
In view of Corollary \ref{orgtarget9},
it is natural to consider the maximum Sprague-Grundy number of a position \(Y\) with \(\lambda(Y) \subset \lambda(X)\) and \(\sg(Y) = \size{\lambda(Y)}\).
A lower bound for this value is presented in Proposition \ref{orgtarget10}.

\subsection{Organization}
\label{sec:orgheadline8}
This paper is organized as follows.
In Section 2, we introduce a notation
and recall the concepts of impartial games and \(p\)-core towers.
Section 3 contains the basic properties of \(p\)-saturations of Nim and Welter's game.
In Section 4, we introduce \(p^\ccH\)-options and reduce Theorem \ref{orgtarget6} to three technical lemmas on \(p^H\)-options.
These lemmas are proven in Sections 5 and 6.
\section{Preliminaries}
\label{sec:orgheadline17}
\label{orgtarget1}
Throughout this paper, \(p\) is an integer greater than 1.
We write \(\oplus\) instead of \(\oplus_p\).

\subsection{Notation}
\label{sec:orgheadline10}
Let \(L \in \NN\).
Let \(\ZZ_{p^L}\) be the ring of integers modulo \(p^L\).
We regard \(\ZZ_{p^L}\) as \(\set{0, 1, \ldots, p^{L - 1}}\).

Let \(x\) and \(y\) be two integers.
Let \(\pexp{x}[L][p]\) denote the \(L\)th digit in the \(p\)-adic expansion of \(x\).
We write \(\pexp{x}[L]\) instead of \(\pexp{x}[L][p]\) when no confusion can arise.
Thus \(x = \sum_{L \in \NN} \pexp{x}[L] p^L\).
We identify \(x \in \ZZ\) with the infinite sequence \((\pexp{x}[0], \pexp{x}[1], \ldots) \in \ZZop^\NN\).
For clarity, we sometimes write \((\pexp{x}[0], \pexp{x}[1], \ldots)_{p}\) instead of \((\pexp{x}[0], \pexp{x}[1], \ldots)\).
In this notation,
\[
 x \oplus y = (\pexp{x}[0] + \pexp{y}[0], \pexp{x}[1] + \pexp{y}[1], \ldots)
 \quad \tand \quad 
 x \ominus y = (\pexp{x}[0] - \pexp{y}[0], \pexp{x}[1] - \pexp{y}[1], \ldots).
\]
Let \(\pexp{x}[\ge L]\) denote the quotient of \(x\) divided by \(p^L\), that is, \(\pexp{x}[\ge L] = (\pexp{x}[L], \pexp{x}[L + 1], \ldots)\).
Let \(\pexp{x}[<L]\) denote the residue of \(x\) modulo \(p^L\).
We identify \(\pexp{x}[<L] \in \ZZ_{p^L}\) with the finite sequence \((\pexp{x}[0], \pexp{x}[1], \ldots, \pexp{x}[L - 1]) \in \ZZop[L]\).

For each \(* \in \Set{+, -, \oplus, \ominus}\),
we define \(\pexp{x}[<L]\, *\, y\) by
\[
 \pexp{x}[<L]\, *\, y = \pexp{(x\, *\, y)}[<L] \in \ZZop[L],
\]
which is well-defined.
For example, 
\((0, 0) \ominus 1 = (p - 1, 0)\), while \((0, 0) - 1 = (p - 1, p - 1)\) and \((0, 0) \ominus p^2 = (0, 0)\).

For each \(r \in \ZZop\), let \((\pexp{x}[<L], \ccr)\) denote \((\pexp{x}[0], \pexp{x}[1], \ldots, \pexp{x}[L - 1], \ccr) \in \ZZop[L + 1]\).

\subsection{Games}
\label{sec:orgheadline15}
Let \(\Gamma\) be a directed graph \((\cV, \cA)\), that is, \(\cV\) is a set and \(\cA \subset \cV \times \cV\).
For \(X \in \cV\), let \(\lg(X)\) denote the maximum length of a path from \(X\).
In this paper, a directed graph \(\Gamma\) is called an \emph{(impartial) game}
if \(\lg(X)\) is finite for every vertex \(X\) of \(\Gamma\).
Let \(\Gamma\) be a game.
A vertex of \ypass{$\Gamma$} is called a \emph{position} in \(\Gamma\).
For two positions \(X\) and \(Y\) in \(\Gamma\),
the position \(Y\) is called an \emph{option} of \(X\) if there exists an edge from \(X\) to \(Y\),
and a \emph{descendant} of \(X\) if there exists a path from \(X\) to \(Y\).
For example, in Welter's game, \(Y\) is a descendant of \(X\) if and only if \(\lambda(Y) \subset \lambda(X)\).
A descendant \(Y\) of \(X\) is said to be \emph{proper} if \(Y \neq X\).

\begin{exm}[Nim with \(p\)-index \(k\)]
 \comment{Exm. [Nim with \(p\)-index \(k\)]}
\label{sec:orgheadline11}
\label{orgtarget11}
Recall that Nim is Welter's game without the restriction that the coins are on distinct squares.
For each \(\ccm \in \NN\), let \(\cN^\ccm\) denote Nim with \(\ccm\) coins.
Let \(\cD^\ccm\) be the set of all \(m\)-tuples \((d^1, \ldots, d^\ccm) \in \NN^\ccm\)
such that
\begin{equation}
\label{orgtarget12}
 \ord\left(\sum_{i = 1}^\ccm d^i\right) = \min \Set{\ord(d^i) : 1 \le i \le \ccm}.
\end{equation}
Let \(\cV = \NN^\ccm\). For each positive integer \(k\), let \(\cA_k =  \sset{(X, Y) \in \cV^2 : X - Y \in \cD^\ccm,\, 0 < \dist(X, Y) < k}\).
The game \((\cV, \cA_k)\) is called \emph{\(\cN^\ccm\) with \(p\)-index \(\cck\)}, and denoted by \(\cN^\ccm_{p, \cck}\).
Note that \(\cN^\ccm_{p, 2} = \cN^\ccm\).
 
\end{exm}
\comment{Comment}
\label{sec:orgheadline12}
\begin{exm}[Welter's game with \(p\)-index \(k\)]
 \comment{Exm. [Welter's game with \(p\)-index \(k\)]}
\label{sec:orgheadline13}
Let
\[
 \cV = \Set{(x^1, \ldots, x^\ccm) \in \NN^\ccm : x^i \neq x^j \; \tfor \; 1 \le i < j \le \ccm}.
\]
The induced subgraph of \(\cN^\ccm_{p, \cck}\) to \(\cV\) is \(\cW^\ccm_{p, \cck}\) in the tuple representation. 
This implies that, for every induced subgraph \(\Gamma\) of \(\cN^\ccm\), we can define the \(p\)-saturations of \(\Gamma\).
 
\end{exm}

\comment{Connect Sprague-Grundy}
\label{sec:orgheadline14}
We now introduce Sprague-Grundy functions.
Let \(\Gamma\) be a game and \(X\) a position in \(\Gamma\).
The \emph{Sprague-Grundy number} \(\sg(X)\) of \(X\) is defined as the minimum non-negative integer \(n\)
such that \(X\) has no option \(Y\) with \(\sg(Y) = n\).
\ypass{This function} \(\sg\) is called the \emph{Sprague-Grundy function} of \(\Gamma\).
The Sprague-Grundy theorem \cite{grundy-mathematics-1939, sprague-uber-1935} states that \(X\) is equivalent to
the position \((\sg(X))\) in \(\cN^1\).
In particular, \(X\) is a winning position for the previous player if and only if \(\sg(X) = 0\).
See \cite{berlekamp-Winning-2001, conway-numbers-2001} for details.
Note that by definition, \(\sg(X) \le \lg(X)\). For example, in Welter's game, \(\sg(X) \le \lg(X) = \ssize{\lambda(X)}\).

\subsection{\(p\)-Core towers}
\label{sec:orgheadline16}
We define \(p\)-core towers and state their properties.
See \cite{olsson-combinatorics-1993} for details.
Let \(X\) be a position in \ypass{Welter's game}.
Let \(L\) be a non-negative integer.

For each \(\ccR \in \ZZop[L]\), define
\begin{equation}
\label{orgtarget13}
 X_\ccR = \Set{ \pexp{x}[\ge L] : x \in X,\quad \pexp{x}[<L] = \ccR}.
\end{equation}
\ypass{The position} \(X\) is uniquely determined by \((X_\ccR)_{\ccR \in \ZZop[L]}\). Indeed,
let
\begin{equation}
\label{orgtarget14}
[X_\ccR]_{\ccR \in \ZZop[L]} = \Set{(\ccR, x) : \ccR \in \ZZ_p^L,\quad x \in X_R},
\end{equation}
where \((\ccR, x) = (\ccR_0, \ccR_1, \ldots, \ccR_{L - 1}, \pexp{x}[0], \pexp{x}[1], \ldots)\).
Then \(X = [X_\ccR]_{\ccR \in \ZZop[L]}\).
With this notation, we define the \emph{\(p\)-core} \(X_{(p)}\) of \ypass{$X$} by
\begin{equation}
\label{orgtarget15}
 X_{(p)} = \left[\Set{0,1, \ldots, \size{X_\ccr} - 1}\right]_{\ccr \in \ZZop}.
\end{equation}
By definition, \(\lambda(X_{(p)})\) is the partition obtained by removing all \(p\)-hooks from \(\lambda(X)\).
\ypass{The sequence} \(((X_\ccR)_{(p)})_{\ccR \in \ZZop[L]}\) is called \emph{the \(L\)th row of the \(p\)-core tower} of \(X\).
We define
\begin{equation}
\label{orgtarget16}
 \tower{X}[L] = \sum_{\ccR \in \ZZop[L]} \size{\lambda\left((X_{\ccR})_{(p)}\right)}.
\end{equation}
For example, if \(x \in \NN\), then \(\tower{\set{x}}[L] = \pexp{x}[L]\).
Let \(\tower{X}\) denote the sequence whose \(L\)th term is \(\tower{X}[L]\).

Recall that there exists a canonical bijection between the set of all \(p\)-hooks
in \(\lambda(X)\) and the set of all hooks in \(\lambda(X_0), \ldots, \lambda(X_{p - 1})\).
Let \(\hook{X}[L]\) be the number of \(p^L\)-hooks in \(\lambda(X)\)
and \(\hook{X}\) the sequence whose \(L\)th term is \(\hook{X}[L]\). 
Then
\begin{equation}
\label{orgtarget17}
\begin{split}
 \hook{X}[L] = \sum_{\ccR \in \ZZop[L]} \hook{X_\ccR}[0] &= \sum_{\ccR \in \ZZop[L]} \left(\sum_{x \in X_\ccR} x  - \left(0 + 1 + \cdots + (\size{X_{\ccR}} - 1)\right) \right)\\
 &= \sum_{x \in X} \pexp{x}[\ge L] - \sum_{\ccR \in \ZZop[L]} {\size{X_\ccR} \choose 2}.
\end{split}
\end{equation}
Furthermore, \(\tower{X}\) satisfies the following properties:
\begin{equation}
\label{orgtarget18}
 \tower{X}[L] = \hook{X}[L] - p \hook{X}[L + 1],
\end{equation}
\begin{equation}
\label{orgtarget19}
 \sum_{L \in \NN}  \tower{X}[L] p^L = \size{\lambda(X)},
\end{equation}
\begin{equation}
\label{orgtarget20}
 \tower{X}[\ge L] = \sum_{\ccR \in \ZZop[L]} \tower{X_\ccR},
\end{equation}
where \(\tower{X}[\ge L] = (\tower{X}[L], \tower{X}[L + 1], \ldots)\).
Note that by (\ref{orgtarget18}), \(\btower{X}[L]\) is equal to the remainder of \(\tower{X}[L]\) divided by \(p\).
In particular, it follows from  (\ref{orgtarget19}) that \(\btower{X} = \ssize{\lambda(X)}\) if and only if
\(\btower{X}= \tower{X}\), that is, \(\btower{X}[L] = \tower{X}[L]\) for each \(L \in \NN\).
\section{\(p\)-Saturations}
\label{sec:orgheadline33}
\label{orgtarget3}
We present the basic properties of \(p\)-saturations of Nim and Welter's game.

\subsection{\(p\)-Saturations of Nim}
\label{sec:orgheadline22}
\comment{Intro Nim}
\label{sec:orgheadline18}
Let \(X\) be a position \((x^1, \cdots, x^\ccm)\) in a \(p\)-saturation of \(\cN^\ccm\).
We show that \(\sg(X) = x^1 \oplus \cdots \oplus x^\ccm\)
and \(\sat_p(\cN^\ccm)= \min(p, \ccm + 1)\).

\begin{lem}
 \comment{Lem. p-saturation of Nim}
\label{sec:orgheadline19}
\label{orgtarget21}
Let \(X\) be a position \((x^1, \ldots, x^\ccm)\) in \(\cN^\ccm_{p, \cck}\).
Then the following assertions hold:
\begin{enumerate}
\renewcommand{\labelenumi}{\textnormal{(\arabic{enumi})}}
\item Let $Y$ be a proper descendant $(y^1, \ldots, y^\ccm)$ of $X$ such that $\dist(X, Y) < \cck$. Then
 \[
 \ord \left(\bigoplus_{i = 1}^\ccm x^i - \bigoplus_{i = 1}^\ccm y^i \right) \ge \min \set{\ord(x^i - y^i) : 1 \le i \le \ccm}
 \]
 with equality if and only if $Y$ is an option of $X$.
\item Suppose that $k \ge \min(p, \ccm + 1)$. For $0 \le \cch <  x^1 \oplus \cdots \oplus x^\ccm$, the position $X$ has an option $(y^1, \ldots, y^\ccm)$ with $y^1 \oplus \cdots \oplus y^\ccm = \cch$. In particular, $\sg(X) = x^1 \oplus \cdots \oplus x^m$.
\end{enumerate}
 
\end{lem}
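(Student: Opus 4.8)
The plan is to prove (1) by a direct digit-wise computation and (2) by induction on \(\lg(X)\), the heart of the matter being the construction, for each value \(h\) below \(s := x^1 \oplus \cdots \oplus x^\ccm\), of an option of \(X\) with that \(\oplus\)-value. For an integer \(a\) and \(L \in \NN\) I write \(\langle a \rangle_L\) for the \(L\)th digit in the \(p\)-adic expansion of \(a\), so that \(\oplus\) and \(\ominus\) act digit-wise.

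For (1) I would set \(d^i = x^i - y^i \in \NN\). Since \(Y\) is a proper descendant the \(d^i\) are not all \(0\), so \(\mu := \min\Set{\ord(d^i) : 1 \le i \le \ccm}\) is finite. As \(p^\mu \mid d^i\) for every \(i\), we have \(x^i \equiv y^i \pmod{p^\mu}\), hence \(x^1 \oplus \cdots \oplus x^\ccm\) and \(y^1 \oplus \cdots \oplus y^\ccm\) agree modulo \(p^\mu\), which is the stated inequality. For the equality clause I would look at the coefficient of \(p^\mu\): from \(x^i = y^i + d^i\) and the coincidence of the digits of \(x^i\) and \(y^i\) below \(\mu\) one gets \(\langle x^i \rangle_\mu \equiv \langle y^i \rangle_\mu + \langle d^i \rangle_\mu \pmod p\), and summing over \(i\) shows that the coefficient of \(p^\mu\) in \((x^1 \oplus \cdots \oplus x^\ccm) - (y^1 \oplus \cdots \oplus y^\ccm)\) is congruent modulo \(p\) to \(\sum_{i=1}^\ccm \langle d^i \rangle_\mu\); the latter is also (since \(p^\mu \mid \sum_i d^i\)) exactly what decides whether \(\ord(\sum_i d^i)\) equals \(\mu\). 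So equality holds if and only if \(\ord(\sum_i d^i) = \mu = \min_i \ord(d^i)\), that is, \(X - Y \in \cD^\ccm\), which — together with \(0 < \dist(X, Y) < \cck\) — is precisely the condition that \(Y\) is an option of \(X\).

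For (2) I argue by induction on \(\lg(X)\). By (1), no option \(Y\) of \(X\) has \(y^1 \oplus \cdots \oplus y^\ccm = s\) (otherwise \(\ord(\sum_i d^i)\) would be finite while \(s - s = 0\)). So it suffices to produce, for each \(h\) with \(0 \le h < s\), an option \(Y\) of \(X\) with \(y^1 \oplus \cdots \oplus y^\ccm = h\); then by the induction hypothesis the set of Sprague-Grundy values of the options of \(X\) coincides with \(\Set{y^1 \oplus \cdots \oplus y^\ccm : Y \text{ an option of } X}\), which contains \(\set{0, 1, \ldots, s-1}\) but not \(s\), forcing \(\sg(X) = s\).

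Constructing this \(Y\) is the step I expect to be the main obstacle, since it must simultaneously meet all three requirements for a move of \(\cN^\ccm_{p,\cck}\) — in particular the bound \(\dist(X,Y) < \cck\) has to be reconciled with the \(p\)-index condition — and the device that makes this work is to fold the low-order corrections into one of the piles already being changed rather than into a fresh one. In detail: let \(j\) be the largest index with \(\langle s \rangle_j \ne \langle h \rangle_j\); since \(h < s\) this forces \(\langle s \rangle_j > \langle h \rangle_j\), so \(c := \langle s \rangle_j - \langle h \rangle_j\) lies in \(\set{1, \ldots, p-1}\). As \(\sum_{i=1}^\ccm \langle x^i \rangle_j\) is a non-negative integer congruent to \(\langle s \rangle_j\) modulo \(p\), it is at least \(\langle s \rangle_j \ge c\), so I can choose indices \(i_1, \ldots, i_r\) and integers \(1 \le \delta_{i_t} \le \langle x^{i_t} \rangle_j\) with \(\delta_{i_1} + \cdots + \delta_{i_r} = c\) (greedily, largest digits first); then \(r \le c \le p - 1\) and \(r \le \ccm\), so \(r \le \min(p, \ccm+1) - 1 \le \cck - 1\). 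I define \(Y\) by leaving \(x^i\) unchanged for \(i \notin \set{i_1, \ldots, i_r}\), setting \(y^{i_t} = x^{i_t} - \delta_{i_t} p^j\) for \(t < r\), and for \(i_r\) keeping all digits above \(j\), putting the \(j\)th digit equal to \(\langle x^{i_r} \rangle_j - \delta_{i_r}\), and choosing the digits below \(j\) so that \(y^1 \oplus \cdots \oplus y^\ccm \equiv h \pmod{p^j}\). Then \(Y \le X\) coordinatewise, \(\dist(X, Y) = r < \cck\), and a digit-by-digit check (digits above \(j\) untouched; the \(j\)th digit sums to \(\langle h \rangle_j\) modulo \(p\); the low digits are forced to match those of \(h\)) gives \(y^1 \oplus \cdots \oplus y^\ccm = h\). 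Finally \(X - Y \in \cD^\ccm\): one has \(\ord(d^{i_t}) = j\) for \(t < r\) and \(\ord(d^{i_r}) \le j\); if \(\ord(d^{i_r}) < j\) then \(\ord(\sum_i d^i) = \ord(d^{i_r}) = \min_i \ord(d^i)\), and otherwise \(\sum_i d^i = c\, p^j\) with \(c\) not divisible by \(p\), so again \(\ord(\sum_i d^i) = j = \min_i \ord(d^i)\). Hence \(Y\) is an option of \(X\) with \(y^1 \oplus \cdots \oplus y^\ccm = h\), which closes the induction and yields \(\sg(X) = x^1 \oplus \cdots \oplus x^\ccm\).
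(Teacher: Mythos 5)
Your proposal is correct and follows essentially the same route as the paper: part (1) by comparing the digit at position $\min_i\ord(x^i-y^i)$ of the carry-free sum with that of the ordinary sum, and part (2) by taking the highest digit where the target $h$ differs from $x^1\oplus\cdots\oplus x^m$, lowering the digit sum there across at most $\min(p,\ccm+1)-1$ coordinates, and absorbing the lower-digit corrections into one coordinate that is already being decreased. The only differences are cosmetic: you verify membership in $\cD^\ccm$ directly instead of citing part (1), and you make explicit the induction behind the final ``$\sg(X)=x^1\oplus\cdots\oplus x^\ccm$.''
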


\begin{proof}
 \comment{Proof. Lem. p-saturation of Nim}
\label{sec:orgheadline20}
(1)\,
Let \(N = \min \set{\ord(x^i - y^i) : 1 \le i \le \ccm }\).
Then \(\ord(\bigoplus_{i = 1}^\ccm x^i - \bigoplus_{i = 1}^\ccm y^i) \ge N\) and
\[
 \Pexp{\left(\bigoplus_{i = 1}^\ccm x^i - \bigoplus_{i = 1}^\ccm y^i\right)}[N] =  \Pexp{\left(\sum_{i = 1}^\ccm x^i - \sum_{i = 1}^\ccm y^i\right)}[N].
\]
Therefore \(Y\) is an option of \(X\)
if and only if \(\ord(\bigoplus_{i = 1}^\ccm x^i - \bigoplus_{i = 1}^\ccm y^i) = N\).

\vspace{0.5em}
\noindent
(2)\,
Let \(\ccn = x^1 \oplus \cdots \oplus x^\ccm\) and \(N = \max \Set{L \in \NN : \pexp{\ccn}[L] \neq \pexp{\cch}[L] }\). 
Since \(\cch < \ccn\), it follows that
\(\pexp{\cch}[N] < \pexp{\ccn}[N] = \pexp{x^1}[N] + \cdots + \pexp{x^\ccm}[N]\).
Thus there exist \(\ccr^1, \ldots, \ccr^\ccm \in \ZZ_p\) such that
\[
 \sum_{i = 1}^\ccm \ccr^i = \pexp{\cch}[N] \quad \tand \quad \ccr^i \le \pexp{x^i}[N] \quad \tfor 1 \le i \le \ccm.
\]
We may assume that \(\ccr^1 < \pexp{x^1}[N]\). 
Since \(\pexp{\ccn}[N] \le p - 1\), we may also assume that 
\[
 \dist\left((\ccr^1, \ldots, \ccr^\ccm),\, (\pexp{x^1}[N], \ldots, \pexp{x^\ccm}[N])\right) < k.
\]
Let 
\[
 y^1 = \left(\pexp{x^1}[0] - \pexp{\ccn}[0] + \pexp{\cch}[0], \ldots, \pexp{x^1}[N - 1] - \pexp{\ccn}[N - 1] + \pexp{\cch}[N - 1], \ccr^1, \pexp{x^1}[N + 1], \pexp{x^1}[N + 2], \ldots\right)_p,
\]
\[
 y^i = \left(\pexp{x^i}[0], \ldots, \pexp{x^i}[N - 1], \ccr^i, \pexp{x^i}[N + 1], \pexp{x^i}[N + 2] \ldots\right)_p \quad \tfor 2 \le i \le \ccm,
\]
and \(Y = (y^1, \ldots, y^\ccm)\).
Then \(Y\) is a proper descendant of \(X\) and \(y^1 \oplus \cdots \oplus y^\ccm = \cch\).
Since \(\dist(X, Y) < k\) and
\[
 \ord\left(\bigoplus_{i = 1}^\ccm x^i - \bigoplus_{i = 1}^\ccm y^i\right) = \ord(x^1 - y^1) = \min \Set{\ord(x^i - y^i) : 1 \le i \le \ccm},
\]
the position \(Y\) is an option of \(X\) by (1).

Since \(X\) has no option \((z^1, \ldots, z^\ccm)\) with \(z^1 \oplus \cdots \oplus z^\ccm = x^1 \oplus \cdots \oplus x^\ccm\) by (1),
we obtain \(\sg(X) = x^1 \oplus \cdots \oplus x^\ccm\).
 
\end{proof}

\comment{The p-saturation index of Nim}
\label{sec:orgheadline21}
It remains to show that \(\sat_p(\cN^\ccm) = \min(p, \ccm + 1)\).
Let \(k = \min(p, \ccm + 1)\). 
If \(\ccm = 0\), then it is clear. Suppose that \(\ccm \ge 1\). Then \(k \ge 2\). Let
\[
 X = (\underbrace{p, \ldots, p}_{k - 1}, 0, \ldots, 0) \in \NN^{\ccm}.
\]
The position \(X\) has no option \((y^1, \ldots, y^{\ccm})\) with \(y^1 \oplus \cdots \oplus y^\ccm = 0\) in \(\cN^{\ccm}_{p, k - 1}\).
Thus \(\sat_p(\cN^\ccm) = k\) by Lemma \ref{orgtarget21}.

\subsection{\(p\)-Saturations of Welter's game}
\label{sec:orgheadline32}
\comment{Intro Welter}
\label{sec:orgheadline23}
Let \(X\) be a position in a \(p\)-saturation of \(\cW^\ccm\).
We define
\[
 \msg(X) = \max \Set{\sg(Y) : Y \text{\ is a descendant of\ } X \text{\ with\ } \sg(Y) = \size{\lambda(Y)}}.
\]
We give lower bounds for \(\msg(X)\) and \(\sat_p(\cW^\ccm)\).
We also show that the right-hand sides of (\ref{orgtarget7}) are equal.

\comment{p-Saturation index of Welter}
\label{sec:orgheadline24}
The next lemma provides a necessary and sufficient condition for a descendant to be an option in the tuple representation.
\begin{lem}
 \comment{Lem. option-condition}
\label{sec:orgheadline25}
\label{orgtarget22}
Let \(X\) be a position \((x^1, \ldots, x^\ccm)\) in \(\cW^\ccm_{p, k}\)
and \(Y\) its proper descendant \((y^1, \ldots, y^\ccm)\) such that \(\dist(X, Y) < \cck\).
Then 
 \[
  \ord(\btower{X} - \btower{Y}) \ge \min \set{\ord(x^i - y^i) : 1 \le i \le \ccm}
\]
with equality if and only if \(Y\) is an option of \(X\).
 
\end{lem}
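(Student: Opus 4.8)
The plan is to compute the difference $\btower{X} - \btower{Y}$ one $p$-ary digit at a time, directly from the $p$-core tower identity~(\ref{orgtarget17}); this is the analogue for Welter's game of the Nim computation in Lemma~\ref{orgtarget21}(1). Set $d^i = x^i - y^i$ for $1 \le i \le \ccm$ and $N = \min\set{\ord(d^i) : 1 \le i \le \ccm}$. Every move of $\cW^\ccm_{p, \cck}$ satisfies $y^i \le x^i$ for all $i$, so every coordinate is non-increasing along a path from $X$ to $Y$; hence each $d^i$ is a non-negative integer, and since $Y \neq X$ some $d^i$ is positive, so $N$ is finite. I shall also use the elementary fact $\ord\bigl(\sum_{i = 1}^\ccm d^i\bigr) \ge N$.

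The key step is the identity
\[
  \hook{X}[L] - \hook{Y}[L] = \frac{1}{p^L} \sum_{i = 1}^\ccm d^i \qquad (0 \le L \le N).
\]
To establish it, I would fix $L$ with $0 \le L \le N$ and use that $p^L \mid d^i$ for every $i$, so $\pexp{x^i}[<L] = \pexp{y^i}[<L]$. This gives two things at once: $\pexp{x^i}[\ge L] - \pexp{y^i}[\ge L] = d^i / p^L$, and the coin index $i$ lies in the same residue class modulo $p^L$ for $X$ as for $Y$. Since $X$ and $Y$ each consist of pairwise distinct coins, the latter yields $\size{X_\ccR} = \size{Y_\ccR}$ for every $\ccR \in \ZZop[L]$, so the sums $\sum_{\ccR \in \ZZop[L]}{\size{X_\ccR} \choose 2}$ and $\sum_{\ccR \in \ZZop[L]}{\size{Y_\ccR} \choose 2}$ coincide. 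Subtracting the instance of~(\ref{orgtarget17}) for $Y$ from that for $X$, these correction sums cancel and the identity drops out.

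Next I would recall that $\btower{Z}[L]$ is, by definition, the remainder of $\hook{Z}[L]$ modulo $p$, so the identity shows $\btower{X}[L] - \btower{Y}[L]$ to be congruent modulo $p$ to $p^{-L}\sum_{i = 1}^\ccm d^i$ for $0 \le L \le N$. For $L < N$ this quantity is divisible by $p$, since $p^N \mid \sum_{i = 1}^\ccm d^i$; hence $\btower{X}[L] = \btower{Y}[L]$ for $L < N$, i.e.\ the first $N$ $p$-ary digits of $\btower{X}$ and $\btower{Y}$ agree, which is precisely the inequality $\ord(\btower{X} - \btower{Y}) \ge N$. Because those digits agree, equality $\ord(\btower{X} - \btower{Y}) = N$ holds if and only if the $N$th digits differ, i.e.\ $p$ does not divide $p^{-N}\sum_{i = 1}^\ccm d^i$; and since $\ord\bigl(\sum_{i = 1}^\ccm d^i\bigr) \ge N$, this is equivalent to $\ord\bigl(\sum_{i = 1}^\ccm d^i\bigr) = N$, which is exactly condition~(3) in the definition of a move of $\cW^\ccm_{p, \cck}$ (conditions (1) and (2) holding by hypothesis). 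Therefore $\ord(\btower{X} - \btower{Y}) = N$ if and only if $Y$ is an option of $X$, which completes the argument.

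The only points I expect to need care are the cancellation of the correction sums above — which genuinely uses both $p^L \mid d^i$ and the distinctness of the coins — and the bookkeeping that turns ``the first $N$ digits agree'' into ``$p$-adic order at least $N$'', a short borrow-free computation. Neither looks like a serious obstacle; in essence this transports the Nim computation of Lemma~\ref{orgtarget21}(1) through the $p$-core tower.
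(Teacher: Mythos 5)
Your proof is correct and follows essentially the same route as the paper: both arguments rest on formula (\ref{orgtarget17}), the cancellation of the binomial terms because \(\pexp{x^i}[<N] = \pexp{y^i}[<N]\) forces \(\size{X_\ccR} = \size{Y_\ccR}\), and reduction modulo \(p\) to identify the \(N\)th digit of \(\btower{X} - \btower{Y}\) with \(\pexp{\left(\sum_i x^i - y^i\right)}[N]\), hence with condition (3) for a move. The only immaterial difference is that you read off the agreement of the digits below \(N\) from the same hook-count identity at every \(L \le N\), while the paper gets it from \(\size{X_\ccR} = \size{Y_\ccR}\) for \(\ccR \in \ZZ_p^N\) via the equality of the lower rows of the \(p\)-core towers.
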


\begin{proof}
 \comment{Proof.}
\label{sec:orgheadline26}
Let \(N = \min \set{\ord(x^i - y^i) : 1 \le i \le \ccm}\).
Then \(\ssize{X_\ccR} = \ssize{Y_\ccR}\) for each \(\ccR \in \ZZ_p^N\). Thus \(\tower{X}[< N] = \tower{Y}[< N]\),
where \(\tower{X}[< N] = (\tower{X}[0], \tower{X}[1], \ldots, \tower{X}[N - 1])\).
This shows that \(\ord(\btower{X} - \btower{Y}) \ge N\).
By (\ref{orgtarget17}), we have
\[
 \hook{X}[N] - \hook{Y}[N] = \sum_{i = 1}^\ccm \pexp{x^i}[\ge N] - \pexp{y^i}[\ge N] =
 \Pexp{\left(\sum_{i = 1}^\ccm p^N \pexp{x^i}[\ge N] - p^N \pexp{y^i}[\ge N]\right)}[\ge N] = \Pexp{\left(\sum_{i = 1}^\ccm x^i - y^i\right)}[\ge N],
\]
and hence
\[
 \btower{X}[N] - \btower{Y}[N] \equiv \hook{X}[N] - \hook{Y}[N] \equiv \Pexp{\left(\sum_{i = 1}^\ccm x^i - y^i\right)}[N]  \pmod{p}.
\]
Therefore \(\ord(\btower{X} - \btower{Y}) = N\) if and only if \(Y\) is an option of \(X\).
 
\end{proof}

\comment{Conenct}
\label{sec:orgheadline27}
We now show that \(\sat_p(\cW^\ccm) \ge \min(p, \ccm + 1)\).
If \(\ccm = 0\), then it is clear.
Suppose that \(\ccm \ge 1\).
For each \(\ccn \in \NN\), we define 
\begin{equation}
\label{orgtarget23}
\pshift{X}{\ccn} = \Set{0,1, \ldots, \ccn - 1} \cup \Set{x + \ccn : x \in X}.
\end{equation}
By definition, \(\lambda(\pshift{X}{\ccn}) = \lambda(X)\),
and so \ypass{the two sets} \(X\) and \(\pshift{X}{\ccn}\) represent essentially the same position.
Let \(k = \min(p, \ccm + 1)\) and \(X = \pshift{\Set{p, p + 1, \ldots, p + k - 2}}{\ccm - k + 1}\).
Then \(k \ge 2\) and \(\btower{X} = p(k - 1)\), but Lemma \ref{orgtarget22} implies that \(X\) has no option \(Y\) with \(\btower{Y} = 0\)
in \(\cW^\ccm_{p, k - 1}\).
Hence \(\sat_p(\cW^\ccm) \ge k\) by Theorem \ref{orgtarget6}.

\comment{A lower bound for \(\msg(X)\) if \(X_N > p\)}
\label{sec:orgheadline28}
We next turn to \(\msg(X)\).
\ypass{Theorem} \ref{orgtarget6} asserts that \(\msg(X) \ge \sg(X)\).
The following result improves \ypass{this} bound.
\begin{prop}
 \comment{Prop. lower-bound}
\label{sec:orgheadline29}
\label{orgtarget10}
Let \(X\) be a position in a \(p\)-saturation of Welter's game.
If \(\tower{X}[N] \ge p + 1\) for some \(N \in \NN\),
then
\begin{equation}
\label{orgtarget24}
 \msg(X) \ge (\underbrace{p - 1, \ldots, p - 1}_{N + 1}, \btower{X}[N + 1], \btower{X}[N + 2], \ldots)_p.
\end{equation}
 
\end{prop}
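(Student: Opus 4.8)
The plan is to produce a single descendant $Y$ of $X$ with $\sg(Y) = \size{\lambda(Y)}$ equal to the right-hand side $v$ of (\ref{orgtarget24}); since $\msg(X) \ge \sg(Y)$ for every descendant $Y$ with $\sg(Y) = \size{\lambda(Y)}$, this is enough. By Theorem \ref{orgtarget6} together with (\ref{orgtarget18}) and (\ref{orgtarget19}), a position $Y$ satisfies $\sg(Y) = \size{\lambda(Y)}$ if and only if $\tower{Y}[L] < p$ for every $L \in \NN$, and then $\sg(Y) = \size{\lambda(Y)} = (\tower{Y}[0], \tower{Y}[1], \ldots)_p$. As every digit of $v$ is at most $p - 1$, it therefore suffices to exhibit a descendant $Y$ of $X$ whose $p$-core tower has the row sums
\[
 \tower{Y}[L] = p - 1 \quad (0 \le L \le N), \qquad \tower{Y}[L] = \btower{X}[L] \quad (L \ge N + 1);
\]
such a $Y$ automatically has $\size{\lambda(Y)} = v$.

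A partition with exactly this $p$-core tower exists, because a $p$-core tower may be prescribed arbitrarily, so the whole content of the proposition is that, under the hypothesis $\tower{X}[N] \ge p + 1$, one such partition fits inside $\lambda(X)$. The plan is to reach $Y$ from $X$ by moves in the $p$-saturation, dealing with the levels $\ge N + 1$ and the levels $\le N$ compatibly: strip $p$-hooks from the high levels — the mechanism already used in Theorem \ref{orgtarget6} — so as to bring the digits above $N$ down to $\btower{X}[N+1], \btower{X}[N+2], \ldots$, and use the surplus of $p^{N}$-hooks in $\lambda(X)$ — there are at least $\hook{X}[N] \ge \tower{X}[N] \ge p + 1$ of them, by (\ref{orgtarget18}) — to reshape the bottom of the diagram into the maximal configuration $\tower{Y}[L] = p - 1$ for $0 \le L \le N$. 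The point of the threshold $p + 1$ rather than $p$ is that, once the bottom rows have been filled to the maximal digit, one $p^{N}$-hook's worth of material must still remain, which is exactly what a nonempty $p$-core at level $N$ requires; with $\tower{X}[N] \le p$ the construction can fail to reach $v$.

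The step I expect to be the main obstacle is this reshaping of the bottom of $\lambda(X)$ — realizing the prescribed $p$-core tower by an honest subdiagram of $\lambda(X)$. The $p$-core tower transforms predictably under removal of a single $p$-hook, but the containment order $\lambda(Y) \subseteq \lambda(X)$ is not monotone in the $p$-core tower, so the reshaping has to be controlled hook by hook: which hooks of $\lambda(X)$ go into building the bottom rows of $Y$, which are reorganised into the tail, and that at every intermediate step a subdiagram of $\lambda(X)$ is retained. I would carry out this bookkeeping through (\ref{orgtarget17}) and (\ref{orgtarget18}), the slot decomposition $X = [X_\ccR]_{\ccR \in \ZZop[N]}$, and the analysis of moves available in a $p$-saturation — concretely, the $p^{H}$-options of Section 4 — which is exactly the machinery the paper develops in order to prove Theorem \ref{orgtarget6}.
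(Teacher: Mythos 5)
Your reduction is fine: by Theorem \ref{orgtarget6} together with (\ref{orgtarget18})--(\ref{orgtarget19}), a descendant $Y$ has $\sg(Y)=\size{\lambda(Y)}$ exactly when all rows of its $p$-core tower satisfy $\tower{Y}[L]<p$, so it suffices to find a descendant with $\tower{Y}[L]=p-1$ for $L\le N$ and $\tower{Y}[L]=\btower{X}[L]$ for $L\ge N+1$. But that existence statement \emph{is} the proposition, and your proposal never proves it: the "reshaping of the bottom of $\lambda(X)$" is exactly the step you flag as the main obstacle and then defer to "the machinery the paper develops." The paper's own proof of this step is not a routine application of Theorem \ref{orgtarget6}: it first applies Theorem \ref{orgtarget6} to the slots $X_R$, $R\in\ZZ_p^{N+1}$, to pin the digits above $N$, and then splits on the peak digit $K$ of the resulting position, using Lemma \ref{orgtarget41} (whose proof occupies Section 6) when $K\ge N$, and, when $K<N$, passing to an $(N-1)$-rounded descendant via Lemma \ref{orgtarget34} and Corollary \ref{orgtarget40} and then invoking Lemma \ref{orgtarget46} followed by Lemma \ref{orgtarget43} (the $p^0$-option "borrow" through Lemma \ref{orgtarget26}). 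None of this is reconstructed or replaced in your sketch, so the argument has a genuine gap at its core.

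Moreover, your heuristic for why the threshold is $p+1$ rather than $p$ is not the actual reason. A size count cannot distinguish the two cases: with $\tower{X}[N]=p$ there is already more than enough "material" ($p\cdot p^N > p^{N+1}-1$) to fill all digits $p-1$ at levels $\le N$. The true obstruction is structural: to convert a digit $p$ at level $N$ into digits $p-1$ at all levels $\le N$ one needs a $p^0$-option, which requires condition (P0); when $\tower{X}[N]=p$ this can fail, as the paper's example $p=3$, $Y=\set{3,4,5}$ shows ($\tower{Y}=(0,3,0,\ldots)$ but $\msg(Y)=(0,2,0,\ldots)$). The hypothesis $\tower{X}[N]\ge p+1$ enters precisely in Lemma \ref{orgtarget46}, where the surplus allows one to choose the lower-level sums $b^0,\ldots,b^{p-1}$ with $b^s\neq b^t$, which by Lemma \ref{orgtarget38} guarantees (P0) and hence the needed $p^0$-option. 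Your write-up does not contain this idea, so even as a plan it points at the wrong mechanism for the key inequality.
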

\comment{Postpone the proof}
\label{sec:orgheadline30}
We postpone the proof of this result to Section \ref{orgtarget25}.

Let \(X\) be a position in a \(p\)-saturation of \(\cW^\ccm\) such that \(\tower{X}[N] = p\) for some \(N \in \NN\).
Then \(X\) does not necessarily satisfy (\ref{orgtarget24}).
For example, let \(p = 3\), \(X = \set{3, 7}\), and \(Y = \set{3, 4, 5}\). Then \(\tower{X} = \tower{Y} = (0, 3, 0, \ldots)\).
However, \(\msg(X) = (2, 2, 0, \ldots)\) and \(\msg(Y) = (0, 2, 0, \ldots)\).
This is because \(X\) has a \(p^0\)-option (see the next section), while \(Y\) does not.
A sufficient condition for a position to have a \(p^0\)-option is given in Lemma \ref{orgtarget26} below.

\comment{Three expressions for \(sg(X)\)}
\label{sec:orgheadline31}
Let \(X = \Set{x^1, \ldots, x^m}\). We close this section by proving
\[
 \btower{X}= x^1 \oplus \cdots \oplus x^\ccm \ominus \left (\bigoplusp_{i < j} \fN_p(x^i - x^j) \right) = \bigoplus_{(i, j) \in \lambda(X)} \fN_p\left(\size{H_{i, j}(X)}\right).
\]
We may assume that \(x^1 > \cdots > x^m\).
Since \(\fN_p(x) = x \ominus (x - 1) = \sum_{L = 0}^{\ord(x)} p^L\), it follows that
\[
 \Pexp{\left(\bigoplus_{(i, j) \in \lambda(X)} \fN_p\left(\size{H_{i, j}(X)}\right)\right)}[\hspace{0.3em} L] \equiv \hook{X}[L] \pmod{p}.
\]
Hence \(\bigoplus_{(i, j) \in \lambda(X)} \fN_p(\ssize{H_{i, j}(X)}) = \btower{X}\).
We also have
\begin{align*}
 \bigoplus_{(i, j) \in \lambda(X)} \fN_p\left(\size{H_{i, j}(X)}\right) &= \bigoplus_{i = 1}^m \left(\bigoplus_{0 \le y < x^i,\,\, y \not \in X} \fN_p(x^i - y)\right) \\
 &= \bigoplus_{i = 1}^m \left( \bigoplus_{0 \le y < x^i} \fN_p(x^i - y) \ominus \left(\bigoplus_{i < j} \fN_p(x^i - x^j)\right) \right)\\
 &= x^1 \oplus \cdots \oplus x^m \ominus \left(\bigoplus_{i < j} \fN_p(x^i - x^j)\right).
\end{align*}
\section{Proof of the main results}
\label{sec:orgheadline84}
\label{orgtarget25}
We reduce Theorem \ref{orgtarget6} and Proposition \ref{orgtarget10} to three technical lemmas.
In the rest of the paper, we call a position in \(\cW^{\ccm}_{p, \ccm + 1}\) simply a position.

\subsection{Sprague-Grundy numbers}
\label{sec:orgheadline43}
\comment{Theorem 1.1'}
\label{sec:orgheadline34}
\label{orgtarget27}
Theorem \ref{orgtarget6} follows immediately from the next result.

\comment{Thm  main2}
\label{sec:orgheadline35}
\vspace{1em}
\noindent
\textbf{Theorem 1.1.\hspace{-0.2em}'\(\;\)}
\label{orgtarget28}
\textit{
Let \(X\) be a position. 
Then the following assertions hold:
\begin{description}
\item[{\textnormal{(A1)}}] If \(\btower{X} = \size{\lambda(X)} > 0\), then \(X\) has a descendant \(Y\) with \(\btower{Y} = \btower{X} - 1\).
\item[{\textnormal{(A2)}}] If \(\btower{X} < \size{\lambda(X)}\), then \(X\) has a proper descendant \(Y\) with \(\btower{Y} \ge \btower{X}\).
\end{description}
In particular, \(\sg(X) = \btower{X}\).
\vspace{1em}
}

\comment{Show by induction}
\label{sec:orgheadline36}
Let \(X\) be a position. 
We prove Theorem 1.1' by induction on \(\size{\lambda(X)}\).
If \(\size{\lambda(X)} = 0\), then it is clear.
Suppose that \(\size{\lambda(X)} > 0\).

\comment{Show how (A1) and (A2) imply the Sprague-Grundy formula}
\label{sec:orgheadline37}
The assertions (A1) and (A2) are proven in Subsections \ref{orgtarget29} and \ref{orgtarget30}, respectively.
In this subsection, we show how (A1) and (A2) imply that \(\sg(X) = \btower{X}\).
Since \(X\) has no option \(Y\) with \(\btower{Y} = \btower{X}\) by Lemma \ref{orgtarget22},
it suffices to show that
\begin{description}
\item[{\textnormal{(SG)}}] if \(0 \le \cch < \btower{X}\), then \(X\) has an option \(Y\) with \(\btower{Y} = \cch\).
\end{description}

\comment{A necessary and sufficient condition for position to be option}
\label{sec:orgheadline38}
The following lemma provides a sufficient condition for a position to satisfy (SG).
\begin{lem}
 \comment{Lem. minus-1}
\label{sec:orgheadline39}
\label{orgtarget31}
Let \(X\) be a position with \(\btower{X} > 0\).
Assume that \(\sg(Z) = \btower{Z}\) for each position \(Z\) with \(\size{\lambda(Z)} < \size{\lambda(X)}\).
If \(X\) has a descendant \(Y\) with \(\btower{Y} = \btower{X} - 1\),
then \(X\) satisfies (SG).
 
\end{lem}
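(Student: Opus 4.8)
The plan is to establish (SG) by first producing, for every target value \(h\) with \(0 \le h < \btower{X}\), a \emph{descendant} of \(X\) whose \(\btower{}\)-value is \(h\), and then promoting such a descendant to an \emph{option} of \(X\). The descendant comes directly from the hypotheses. Since \(\btower{Y} = \btower{X} - 1 \ne \btower{X}\), the position \(Y\) is a proper descendant of \(X\), so \(\size{\lambda(Y)} < \size{\lambda(X)}\) and the standing assumption gives \(\sg(Y) = \btower{Y} = \btower{X} - 1\). Hence \(Y\) has options with Sprague--Grundy values \(0, 1, \ldots, \btower{X} - 2\); each such option is again a strictly smaller position, so by the assumption its \(\btower{}\)-value equals its Sprague--Grundy value. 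Together with \(Y\) itself, this shows that \(X\) has a descendant \(V_h\) with \(\btower{V_h} = h\) for every \(h\) with \(0 \le h \le \btower{X} - 1\).

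The next step is to decide, via Lemma~\ref{orgtarget22}, when \(V_h\) is already an option of \(X\). Fix \(h\) and put \(M = \ord(\btower{X} - h)\). For a proper descendant \(Z\) of \(X\) with \(\btower{Z} = h\), Lemma~\ref{orgtarget22} gives \(\min_i \ord(x^i - z^i) \le \ord(\btower{X} - \btower{Z}) = M\), with equality precisely when \(Z\) is an option of \(X\); and the inequality \(\min_i \ord(x^i - z^i) \ge M\) says exactly that every coin of \(X\) remains in its residue class modulo \(p^M\). Thus any proper descendant of \(X\) with \(\btower{}\)-value \(h\) all of whose coins stay in their classes modulo \(p^M\) is automatically an option. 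In particular, when \(M = 0\) the descendant \(V_h\) is an option of \(X\); this settles every \(h\) whose lowest \(p\)-adic digit differs from that of \(\btower{X}\) --- and it is the only place where the hypothesis on \(Y\) is genuinely used, since \(h = \btower{X} - 1\) is one such \(h\).

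It then remains to treat \(h\) with \(M \ge 1\), and here I would discard \(V_h\) and build an option directly from the \(M\)th level of the \(p\)-core tower. Write \(X = [X_R]_{R \in \ZZop[M]}\). Each \(X_R\) is a position, and since \(M \ge 1\), identities (\ref{orgtarget19}) and (\ref{orgtarget20}) force \(\size{\lambda(X_R)} < \size{\lambda(X)}\); so the assumption yields \(\sg(X_R) = \btower{X_R}\), and therefore \(X_R\) has a descendant realising any prescribed \(\btower{}\)-value in \(\set{0, 1, \ldots, \btower{X_R}}\). Now \(\btower{X}\) and \(h\) agree modulo \(p^M\), while by (\ref{orgtarget18}) and (\ref{orgtarget20}) one has \(\lfloor h/p^M\rfloor < \lfloor \btower{X}/p^M\rfloor = \bigoplus_{R \in \ZZop[M]} \btower{X_R}\). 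So Lemma~\ref{orgtarget21}(2), applied to the Nim position \((\btower{X_R})_{R \in \ZZop[M]}\) in the full game \(\cN^{p^M}_{p, p^M + 1}\), produces values \(b_R \le \btower{X_R}\) with \(\bigoplus_R b_R = \lfloor h/p^M\rfloor\). Pick a descendant \(Y_R\) of \(X_R\) with \(\btower{Y_R} = b_R\) and reassemble \(Y' = [Y_R]_{R \in \ZZop[M]}\). Every coin of \(X\) then stays in its class modulo \(p^M\), so exactly as in the proof of Lemma~\ref{orgtarget22} one gets \(\tower{Y'}[L] = \tower{X}[L]\) for \(L < M\), while \(\tower{Y'}[\ge M] = \sum_R \tower{Y_R}\) by (\ref{orgtarget20}); combining these gives \(\btower{Y'} = h\). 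Since \(h \ne \btower{X}\), \(Y'\) is a proper descendant of \(X\), hence by the previous paragraph an option of \(X\). This completes the verification of (SG).

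The hard part will be the case \(M \ge 1\): recognising it as a Nim reachability problem on the independent components \(X_R\) of the \(p\)-core tower, and nailing down the two bookkeeping facts that make the induction close --- that reassembly turns digit-wise mod-\(p\) addition of the vectors \(\tower{\cdot}\) (restricted to levels \(\ge M\)) into \(\oplus\) of the numbers \(\btower{\cdot}\), and that passing to level \(M \ge 1\) strictly decreases \(\size{\lambda(\cdot)}\), so that the inductive hypothesis really does apply to each \(X_R\). By contrast the case \(M = 0\) is where the existence of a descendant with \(\btower{}\)-value \(\btower{X} - 1\) is indispensable: it cannot be recovered from the tower decomposition, because level \(0\) is just \(X\) itself.
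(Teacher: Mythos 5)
Your proof is correct, and it follows the paper's overall plan --- split on \(M = \ord(\btower{X} - \cch)\), handle \(M = 0\) through \(Y\) and Lemma \ref{orgtarget22}, and handle the congruent case by decomposing \(X\) along its \(p\)-core tower and invoking Lemma \ref{orgtarget21} together with the standing hypothesis --- but your treatment of the congruent case is a genuine variant. The paper decomposes only one level down, writing \(X = [X_\ccr]_{\ccr \in \ZZ_p}\); it then needs the full strength of Lemma \ref{orgtarget21}, namely targets \(b^\ccr \le \btower{X_\ccr}\) satisfying the Nim move condition \(\min_\ccr \ord(\btower{X_\ccr} - b^\ccr) = \ord(\btower{X} - \cch) - 1\), it must realize each \(b^\ccr\) by an \emph{option} \(Z_\ccr\) of \(X_\ccr\) (so that Lemma \ref{orgtarget22} applies componentwise), and it verifies optionhood of the reassembled \(Z\) by explicitly computing \(\min_i \ord(x^i - z^i)\) from these two ingredients. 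You instead decompose at level \(M\), use Lemma \ref{orgtarget21} only for the existence of \(b_R \le \btower{X_R}\) with the prescribed \(\oplus\)-sum, realize the \(b_R\) by arbitrary descendants, and obtain the equality case of Lemma \ref{orgtarget22} by the sandwich: staying in residue classes modulo \(p^M\) forces \(\min_i \ord(x^i - y'^i) \ge M\), while the inequality of Lemma \ref{orgtarget22} caps it at \(M\). This makes the ord bookkeeping disappear; the price is that you must apply the induction hypothesis to the deeper components \(X_R\), \(R \in \ZZop[M]\), which requires the (correct, and correctly noted) observation that \(\size{\lambda(X_R)} < \size{\lambda(X)}\) when \(M \ge 1\), whereas the paper's one-level recursion stays closer to the tower recursion it uses elsewhere. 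Your identification of exactly where the hypothesis on \(Y\) is needed (only when \(\cch \not\equiv \btower{X} \pmod p\)) matches the paper.
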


\comment{Postpone the proof}
\label{sec:orgheadline40}
We postpone the proof of this lemma to the end of this section.

\comment{Show (SG2') assuming (A1) and (A2)}
\label{sec:orgheadline41}
Let \(X\) be a position with \(\btower{X} > 0\).
Our task now is to show that \(X\) has a descendant \(Y\)
with \(\btower{Y} = \btower{X} - 1\) assuming (A1) and (A2).
If \(\btower{X} = \size{\lambda(X)}\), then there is nothing to prove.
Suppose that \(\btower{X} < \size{\lambda(X)}\).
By (A2), the position \(X\) has a proper descendant \(Z\) with \(\btower{Z} \ge \btower{X}\).
By the induction hypothesis, \(\sg(Z) = \btower{Z} > \btower{X} - 1\),
and so \(Z\) has an option \(Y\) with \(\sg(Y) = \btower{Y} = \btower{X} - 1\).
This completes the proof.

\begin{rem}
 \comment{Rem.}
\label{sec:orgheadline42}
Suppose that \(p\) is a prime.
Then (A1) follows from Macdonald's result \cite{macdonald-degrees-1971} and the hook-length formula.
Indeed, let \(X\) be a position with \(\btower{X} = \size{\lambda(X)} > 0\),
and let \(\deg(\rho^X)\) be the degree of \(\rho^X\).
By the hook-length formula, 
\[
 \deg(\rho^X) = \sum_{Y} \deg(\rho^{Y}),
\]
where the sum is over all descendants \(Y\) of \(X\) with \(\ssize{\lambda(Y)} = \ssize{\lambda(X)} - 1\).
Since \(\btower{X} = \size{\lambda(X)}\) and \(p\) is a prime, \(\deg(\rho^X) \not \equiv 0 \pmod{p}\) by Macdonald's result,
and so \(\deg(\rho^{Y}) \not \equiv 0 \pmod{p}\) for some descendant \(Y\) of \(X\) with \(\ssize{\lambda(Y)} = \ssize{\lambda(X)} - 1\).
Thus \(\btower{Y} = \btower{X} - 1\).
 
\end{rem}

\subsection{\(p^\ccH\)-Options}
\label{sec:orgheadline59}
\label{orgtarget29}
\comment{Show (A1) at the end}
\label{sec:orgheadline44}
We show (A1) at the end of this subsection using \(p^\ccH\)-options.

\comment{Introduce a total order}
\label{sec:orgheadline45}
To define \(p^H\)-options, we first introduce a total order.
Let \((\alpha_L)_{L \in \NN}\) and \((\beta_L)_{L \in \NN}\) be two non-negative integer sequences with finitely many nonzero terms.
Suppose that \((\alpha_L)_{L \in \NN} \neq (\beta_L)_{L \in \NN}\).
Let \(N = \max \Set{L \in \NN : \alpha_L \neq \beta_L}\).
If \(\alpha_N < \beta_N\), then we write
\begin{equation}
\label{orgtarget32}
 (\alpha_L)_{L \in \NN} \prec (\beta_L)_{L \in \NN}.
\end{equation}

\comment{Order of a position}
\label{sec:orgheadline46}
We next define the (\(p\)-adic) order of a position.
For a non-terminal position \(X\) (that is, \(X\) has an option), 
the \emph{order} \(\ord(X)\) of \(X\) is defined by
\begin{equation}
\label{orgtarget33}
 \ord(X) = \min \Set{L \in \NN : \tower{X}[L] \neq 0}.
\end{equation}
If \(X\) is a terminal position, then we define \(\ord(X) = \infty\).
For example, \(\ord(\Set{x}) = \ord(x)\) for each \(x \in \NN\).

\begin{dfn}[\(p^\ccH\)-options and \(p^*\)-options]
 \comment{Dfn. [\(p^\ccH\)-options and \(p^*\)-options]}
\label{sec:orgheadline47}

Let \(X\) be a position with order \(M\) and \(Y\) its option \(X \cup \Set{x - p^\ccH} \setminus \Set{x}\).
The position \(Y\) is called a \emph{\(p^\ccH\)-option} of \(X\) if it has the following two properties:
\begin{enumerate}
\item \(\tower{Y}[L] \equiv \tower{X}[L] - 1 \pmod{p}\) for \(\ccH \le L \le M\).
\item \(\tower{Y}[\ge M+1] \succeq \tower{X}[\ge M+1]\).
\end{enumerate}
A \(p^\ccH\)-option of \(X\) is called a \emph{\(p^*\)-option} of \(X\) if \(\ccH = M\).
 
\end{dfn}

\begin{lem}
 \comment{Lem. existence-nondecreasing-option}
\label{sec:orgheadline48}
\label{orgtarget34}
Every non-terminal position has a \(p^*\)-option.
 
\end{lem}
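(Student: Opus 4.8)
The plan is to argue by strong induction on $\size{\lambda(X)}$, working with the level-$1$ $p$-core tower decomposition $X = [X_r]_{r \in \ZZ_p}$ and reducing everything to the case $\ord(X) = 0$. Two simple facts will be used throughout: the order $\prec$ is translation invariant, so $\alpha \succeq \beta$ implies $\alpha + \gamma \succeq \beta + \gamma$ for every sequence $\gamma$ with finitely many nonzero terms; and every move of a single coin to a lower empty square is an option (in the definition of $\cW^\ccm_{p,\ccm+1}$ all the conditions hold trivially, since the Hamming distance equals $1$).

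Suppose first that $M := \ord(X) \ge 1$. Then $\tower{X}[0] = 0$, and $\tower{X}[\ge 1] = \sum_{r \in \ZZ_p} \tower{X_r}$ by (\ref{orgtarget20}), so some $X_{r_0}$ is non-terminal with $\ord(X_{r_0}) = M - 1$. From (\ref{orgtarget19}) one gets $\size{\lambda(X)} = p \sum_{r} \size{\lambda(X_r)} > 0$, hence $\size{\lambda(X_{r_0})} < \size{\lambda(X)}$, so by the induction hypothesis $X_{r_0}$ has a $p^*$-option, i.e.\ a $p^{M-1}$-option $Y_{r_0} = X_{r_0} \cup \Set{y - p^{M-1}} \setminus \Set{y}$. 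Lifting this move one level up turns it into the move of a single coin of $X$ by $p^M$; call the result $Y$. Then $Y$ is an option of $X$, $\tower{Y}[\ge M] = \sum_{r \ne r_0} \tower{X_r} + \tower{Y_{r_0}}$, and from the defining properties of $Y_{r_0}$ together with translation invariance of $\prec$ one checks that $\tower{Y}[L] = \tower{X}[L]$ for $L < M$, that $\tower{Y}[M] \equiv \tower{X}[M] - 1 \pmod{p}$, and that $\tower{Y}[\ge M+1] \succeq \tower{X}[\ge M+1]$. Hence $Y$ is a $p^M$-option, i.e.\ a $p^*$-option, of $X$.

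It remains to treat $M = 0$, i.e.\ $\mu := \lambda(X_{(p)}) \ne \emptyset$. Now a $p^*$-option is a $p^0$-option: a legal single-box removal $Y = X \cup \Set{x - 1} \setminus \Set{x}$ with $\tower{Y}[0] \equiv \tower{X}[0] - 1 \pmod{p}$ and $\tower{Y}[\ge 1] \succeq \tower{X}[\ge 1]$. The congruence is automatic, since $\tower{Z}[0] \equiv \size{\lambda(Z)} \pmod{p}$ for every position $Z$ by (\ref{orgtarget19}) and $\size{\lambda(Y)} = \size{\lambda(X)} - 1$. So the task reduces to finding a corner of $\lambda(X)$ whose removal does not decrease $\tower{X}[\ge 1] = \sum_{r} \tower{X_r}$ in the order $\prec$. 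If $\lambda(X)$ is itself a $p$-core this is immediate: then $\tower{X}[\ge 1] = 0$ and any corner of $\lambda(X) = \mu \ne \emptyset$ works.

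The remaining case $\tower{X}[\ge 1] \ne 0$ is the main obstacle. On the $p$-runner abacus, removing a box transfers one bead from a runner $r$ to runner $r-1$ (or, for $r = 0$, down one level to runner $p-1$), so it changes only the summands $\tower{X_{r-1}}$ and $\tower{X_r}$ of $\sum_r \tower{X_r}$; the difficulty is that these changes need not cancel, and a careless choice — transferring the topmost bead of a runner, say — can destroy the top coordinate of $\sum_r \tower{X_r}$ and break the required inequality, as small $p = 2$ examples already show. The plan is to exploit $\mu \ne \emptyset$, which forces the occupation numbers $\size{X_0}, \dots, \size{X_{p-1}}$ to be unbalanced: either $\size{X_r} > \size{X_{r-1}}$ for some $r \ge 1$, or these numbers are weakly decreasing with $\size{X_0} > \size{X_{p-1}} + 1$. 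In each case one should be able to choose a runner $r$ and a bead on it — chosen so as not to collapse the part of $\lambda(X_r)$ that carries the high levels of its tower — whose transfer to the neighbouring runner is legal and leaves $\sum_r \tower{X_r}$ unchanged, or strictly larger, in $\prec$. Verifying that such a bead exists and has the claimed effect, by comparing the two towers coordinate by coordinate from the top via (\ref{orgtarget17}) and (\ref{orgtarget18}), is the technical heart of the proof, and the step I expect to be the hardest.
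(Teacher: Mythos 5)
Your reduction to the case $\ord(X)=0$ (decompose $X=[X_r]_{r\in\ZZ_p}$, take a $p^*$-option of some $X_{r_0}$ with $\ord(X_{r_0})=M-1$, and lift it) is sound and is exactly the paper's Lemma \ref{orgtarget45}. Your identification of the starting point for $\ord(X)=0$ is also right: if no $s\in\ZZ_p$ satisfies $\ssize{X_{s\ominus 1}}+\delta_s<\ssize{X_s}$, the bead counts are balanced and the $p$-core is empty, contradicting $\tower{X}[0]\neq 0$. But the heart of the lemma --- given such an $s$, choosing \emph{which} coin $x$ with $\pexp{x}[<1]=s$ to move by $1$ so that $\tower{Y}[\ge 1]\succeq\tower{X}[\ge 1]$ --- is precisely the step you leave as a plan ("one should be able to choose a bead \ldots the step I expect to be the hardest"). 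As it stands this is a genuine gap, not a proof: you correctly observe that a careless bead can destroy a high coordinate of the tower, but you give no construction and no argument that a good bead always exists.

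The paper closes this gap with two lemmas you would need to reproduce. Lemma \ref{orgtarget48} gives the exact effect of a single-coin move $Y=X\cup\Set{x-p^H}\setminus\Set{x}$ on the hook counts: for $L\ge H+1$, $\hook{Y}[L]-\hook{X}[L]=\ssize{X_{\pexp{x}[<L]}}-\ssize{X_{\pexp{(x-p^H)}[<L]}}-\delta_{\pexp{x}[H]}\cdots\delta_{\pexp{x}[L-1]}-1$, so it suffices to make all these differences nonnegative (then $\hook{Y}[\ge 1]\succeq\hook{X}[\ge 1]$, which forces $\tower{Y}[\ge 1]\succeq\tower{X}[\ge 1]$ via $\tower{}[L]=\hook{}[L]-p\hook{}[L+1]$). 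Lemma \ref{orgtarget51} then produces the coin greedily digit by digit: starting from the level-$1$ inequality $\ssize{X_{s\ominus 1}}+\delta_s<\ssize{X_s}$ and using $\ssize{X_S}=\sum_{r\in\ZZ_p}\ssize{X_{(S,r)}}$, one extends $S$ one digit at a time so that the strict inequality survives at every level $L$; the resulting $x$ automatically lies in $X$ with $x-1\notin X$, and satisfies the inequality at all $L\ge 1$ simultaneously. This simultaneous control of \emph{all} higher levels (rather than only the top coordinate of $\sum_r\tower{X_r}$, which is what your sketch aims at) is what makes the argument go through; without it, your proposal proves the lemma only for positions of positive order and for $p$-cores.
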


\comment{Postpone the proof}
\label{sec:orgheadline49}
The proof of this lemma is deferred to Section \ref{orgtarget35}.

The next lemma is used to determine \(\tower{X}\).
\begin{lem}
 \comment{Lem. tower-lambda}
\label{sec:orgheadline50}
\label{orgtarget36}
Let \(X\) be a position.
Then \(\tower{X} \preceq \size{\lambda(X)} (= (\pexp{\size{\lambda(X)}}[0][p], \pexp{\size{\lambda(X)}}[1][p], \ldots))\).
In particular, if \(\tower{X}[\ge N] \succeq \pexp{\size{\lambda(X)}}[\ge N]\) for some \(N \in \NN\),
then \(\tower{X}[\ge N] = \pexp{\size{\lambda(X)}}[\ge N]\).
 
\end{lem}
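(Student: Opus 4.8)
The plan is to prove the inequality $\tower{X} \preceq \pexp{\size{\lambda(X)}}[][]$ by induction on $\size{\lambda(X)}$, using the $p^*$-option supplied by Lemma~\ref{orgtarget34} to pass to a smaller position. The base case $\size{\lambda(X)} = 0$ is trivial, since then $\tower{X}$ is the zero sequence. So assume $\size{\lambda(X)} > 0$, so that $X$ is non-terminal, and let $M = \ord(X)$. By Lemma~\ref{orgtarget34}, $X$ has a $p^*$-option $Y$, i.e.\ a $p^M$-option: by Definition of $p^\ccH$-options with $\ccH = M$ we have $\tower{Y}[M] \equiv \tower{X}[M] - 1 \pmod p$ and $\tower{Y}[\ge M+1] \succeq \tower{X}[\ge M+1]$. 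Since $M = \ord(X)$, all terms $\tower{X}[L]$ with $L < M$ vanish, and by Lemma~\ref{orgtarget22} (or directly from $\ssize{X_\ccR} = \ssize{Y_\ccR}$ below level $M$) those terms of $\tower{Y}$ also vanish, so $\tower{X}$ and $\tower{Y}$ agree in positions $0, \dots, M-1$. Note $\size{\lambda(Y)} = \size{\lambda(X)} - 1$, so the induction hypothesis gives $\tower{Y} \preceq \pexp{\size{\lambda(Y)}}[][] = \pexp{\size{\lambda(X)} - 1}[][]$.

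Next I would combine this with the defining congruence to climb back up to $X$. Write $n = \size{\lambda(X)}$ and let $M' = \ord(n) = \min\Set{L : \pexp{n}[L] \neq 0}$. The key observation is that $M \le M'$: indeed, by~(\ref{orgtarget19}) and the fact that $\tower{X}[L] \equiv \btower{X}[L] \pmod p$, the lowest level at which $\tower{X}$ is nonzero is $M$, and modulo $p^{M}$ we get $\size{\lambda(X)} \equiv 0$, forcing $p^{M} \mid n$, i.e.\ $M \le M'$. (More carefully: $n = \sum_L \tower{X}[L] p^L$ has all terms with $L < M$ equal to zero, so $p^M \mid n$.) Now compare $\tower{X}$ with $\pexp{n}[][]$ at their highest point of disagreement. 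By the induction hypothesis either $\tower{Y} = \pexp{n-1}[][]$ or $\tower{Y} \prec \pexp{n-1}[][]$; in the latter case let $N$ be the top disagreement index for the pair $(\tower{Y}, \pexp{n-1}[][])$, and one checks that above index $N$ the sequences $\tower{X}$ and $\pexp{n}[][]$ agree (using $\tower{Y}[\ge M+1] \succeq \tower{X}[\ge M+1]$ together with Lemma~\ref{orgtarget36} applied to $Y$ to pin down $\tower{Y}[\ge M+1]$ above that point, and the fact that $p$-adic carrying when adding $1$ to $n-1$ only affects digits up to $M'$), while at index $N$ we have $\tower{X}[N] < \pexp{n}[N]$, giving $\tower{X} \prec \pexp{n}[][]$. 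In the former case, adding back the removed box changes both $\tower{Y} \to \tower{X}$ and $\pexp{n-1}[][] \to \pexp{n}[][]$ in a controlled way below level $M'$, and a direct digitwise comparison at level $M$ (where $\tower{X}[M] = \tower{Y}[M] + 1$ modulo carries, and $\pexp{n}[M]$ is determined by the carry pattern $(p-1,\dots,p-1,\pexp{n-1}[M']+1)$ when $n-1$ ends in $M'$ copies of $p-1$) yields $\tower{X} \preceq \pexp{n}[][]$.

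The ``in particular'' clause is then immediate: if $\tower{X}[\ge N] \succeq \pexp{n}[\ge N]$ for some $N$, then since the first part gives $\tower{X} \preceq \pexp{n}[][]$, truncating both sides above level $N$ preserves $\preceq$ (the relation $\preceq$ depends only on the highest disagreement, so it is compatible with taking tails), hence $\tower{X}[\ge N] \preceq \pexp{n}[\ge N]$ as well, and therefore $\tower{X}[\ge N] = \pexp{n}[\ge N]$.

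The main obstacle I anticipate is the bookkeeping in the second paragraph: reconciling the two distinct ways the sequences $\tower{\cdot}$ and $\pexp{\cdot}[][]$ move when one box is added — the former only via the congruence $\tower{Y}[L] \equiv \tower{X}[L]-1$ for $M \le L \le M$ plus a $\succeq$ bound above, the latter via ordinary $p$-adic carrying — without ever actually reconstructing $\tower{X}$ term by term. The cleanest route is probably to avoid case analysis on carrying altogether and instead argue: the highest index where $\tower{X}$ and $\pexp{n}[][]$ can possibly disagree is $M' = \ord(n)$ (above $M'$ both sequences equal $\pexp{\size{\lambda(X)}}[\ge M'+1][] = \pexp{n}[\ge M'+1][]$, using that $\tower{Y}$ already matches there by induction and the removed box sits at level $\le M \le M'$), and at level $M'$ one has $\tower{X}[M'] \le \pexp{n}[M']$ directly from $\sum_{L \le M'}\tower{X}[L]p^L \le n$ together with $\tower{X}[L] = 0$ for $L < M$. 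This sidesteps the carry analysis entirely.
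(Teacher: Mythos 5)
Your proposal does not follow the paper's argument and, as written, it has genuine gaps; it is also far heavier than necessary. The intended proof is a one-liner from (\ref{orgtarget19}): since \(\sum_{L \in \NN}\tower{X}[L]p^L = \size{\lambda(X)}\) and every \(\tower{X}[L]\) is non-negative, the relation \(\tower{X} \succ \size{\lambda(X)}\) would give, at the highest index \(N\) of disagreement, \(\tower{X}[N] \ge \pexp{\size{\lambda(X)}}[N] + 1\), hence
\(\sum_{L}\tower{X}[L]p^L \ge \sum_{L > N}\pexp{\size{\lambda(X)}}[L]p^L + (\pexp{\size{\lambda(X)}}[N]+1)p^N > \size{\lambda(X)}\),
a contradiction. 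No induction and no \(p^*\)-options are needed; invoking Lemma \ref{orgtarget34} (whose proof is only given in Section \ref{orgtarget35}) is not circular here, but it is an unnecessary dependency.

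Two steps of your induction are moreover incorrect. First, a \(p^*\)-option \(Y = X \cup \set{x - p^M} \setminus \set{x}\) removes a hook of length \(p^M\), so \(\size{\lambda(Y)} = \size{\lambda(X)} - p^M\), not \(\size{\lambda(X)} - 1\); everything that compares \(\tower{Y}\) with the digits of \(\size{\lambda(X)} - 1\) and ``adds back the removed box'' breaks down whenever \(\ord(X) > 0\). Second, the claim in your ``cleanest route'' that \(\tower{X}\) and the digits of \(n = \size{\lambda(X)}\) can only disagree at or below \(\ord(n)\) is false: for \(p = 2\), a partition of \(10\) with empty \(2\)-core and \(2\)-quotient \(((2),(2,1))\) has \(\tower{X} = (0,3,1,0,\ldots)\), while \(10 = (0,1,0,1)_2\), so the sequences disagree at indices \(2\) and \(3\), both above \(\ord(10) = 1\). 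The remaining carry analysis is only sketched (``one checks''), so the inequality \(\tower{X} \preceq \size{\lambda(X)}\) is never actually established; only your tail argument for the ``in particular'' clause is sound, and that part agrees with what the paper leaves implicit.
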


\begin{proof}
 \comment{Proof.}
\label{sec:orgheadline51}
If \(\tower{X} \succ \size{\lambda(X)}\), then \(\sum_{L \in \NN} \tower{X}[L] p^L > \size{\lambda(X)}\),
which contradicts (\ref{orgtarget19}).
 
\end{proof}

\comment{Proof of (A1) for order 0}
\label{sec:orgheadline52}
We now prove (A1) for \(\ord(X) = 0\).
Since \(\ord(X) = 0\), Lemma \ref{orgtarget34} implies that \(X\) has a \(p^0\)-option \(Y\).
It suffices to show that \(\btower{Y} = \btower{X} - 1\). 
By the definition of \(p^0\)-options, 
\(\size{\lambda(Y)} = \size{\lambda(X)} - 1\).
Since \(\tower{X} = \btower{X}\), we have \(0 \neq \tower{X}[0] = \btower{X}[0] = \pexp{\ssize{\lambda(X)}}[0]\). This shows that
\[
 \size{\lambda(Y)} = \size{\lambda(X)} - 1 = (\tower{X}[0] - 1, \tower{X}[1], \tower{X}[2], \ldots).
\]
Since \(\tower{Y}[\ge 1] \succeq \tower{X}[\ge 1] = \pexp{\ssize{\lambda(Y)}}[\ge 1]\),
it follows from Lemma \ref{orgtarget36} that \(\tower{Y}[\ge 1] = \pexp{\size{\lambda(Y)}}[\ge 1]\), and therefore \(\btower{Y} = \size{\lambda(Y)} = \btower{X} - 1\).

\comment{A equivalence relation of positions}
\label{sec:orgheadline53}
To prove (A1) for \(\ord(X) > 0\),
we need \(p^0\)-options.
While every non-terminal position has a \(p^*\)-option, it does not necessarily have a \(p^0\)-option.
To state a sufficient condition for a position to have a \(p^0\)-option,
we introduce an equivalence relation on positions.
Let \(X\) and \(X'\) be two positions, and let \(N\) be a non-negative integer.
We write 
\begin{equation}
\label{orgtarget37}
 X \equiv X' \pmod{p^N}
\end{equation}
if \(\ssize{X_R} = \ssize{X'_R}\) for every \(R \in \ZZ_p^N\).
For example, if \(x, x' \in \NN\), then \(\set{x} \equiv \set{x'} \pmod{p^N}\) if and only if \(x \equiv x' \pmod{p^N}\).
By definition, this relation has the following properties.
\begin{lem}
 \comment{Lem.equivalent-then-sametower}
\label{sec:orgheadline54}
\label{orgtarget38}
Suppose that \(X\) and \(X'\) are two positions satisfying \(X \equiv X' \pmod{p^N}\) for some \(N \in \NN\).
Then \(X \equiv X' \pmod{p^L}\) for \(0 \le L \le N\), and \(\tower{X}[<N] = \tower{X'}[<N]\).
 
\end{lem}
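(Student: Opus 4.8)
The plan is to prove the two assertions separately and in the stated order, both by directly unwinding the definition (\ref{orgtarget13}) of the sets $X_R$; the whole argument is elementary bookkeeping about how the $p$-adic decomposition refines as the length of the index increases. First I would handle the downward closure of the congruence. Fix $L$ with $0 \le L \le N$ and $R' \in \ZZ_p^L$. The key step is the refinement identity
\[
 \size{X_{R'}} = \sum_{R \in \ZZ_p^N,\ \pexp{R}[<L] = R'} \size{X_R},
\]
which holds because $x \mapsto \pexp{x}[\ge L]$ is a bijection from $\Set{x \in X : \pexp{x}[<L] = R'}$ onto $X_{R'}$, and its domain is the disjoint union, over those $R \in \ZZ_p^N$ with $\pexp{R}[<L] = R'$, of the sets $\Set{x \in X : \pexp{x}[<N] = R}$, each of cardinality $\size{X_R}$. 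The same identity holds for $X'$, so the hypothesis $\size{X_R} = \size{X'_R}$ (all $R \in \ZZ_p^N$) forces $\size{X_{R'}} = \size{X'_{R'}}$; as $R'$ is arbitrary, $X \equiv X' \pmod{p^L}$.

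Next I would prove $\tower{X}[<N] = \tower{X'}[<N]$. Fix $L$ with $0 \le L < N$; by (\ref{orgtarget16}) it is enough to show $\size{\lambda((X_R)_{(p)})} = \size{\lambda((X'_R)_{(p)})}$ for each $R \in \ZZ_p^L$ and sum over $R$. Here the point is that, by (\ref{orgtarget15}), the $p$-core $Z_{(p)}$ of a position $Z$ is determined by the tuple of branch sizes $(\size{Z_r})_{r \in \ZZ_p}$ alone, and a direct check from (\ref{orgtarget13}) gives $(X_R)_r = X_{(R, r)}$ for each $r \in \ZZ_p$, with $(R, r) \in \ZZ_p^{L+1}$. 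Since $L + 1 \le N$, the first part already yields $\size{X_{(R,r)}} = \size{X'_{(R,r)}}$ for every $r$, so $(X_R)_{(p)} = (X'_R)_{(p)}$ and in particular $\size{\lambda((X_R)_{(p)})} = \size{\lambda((X'_R)_{(p)})}$, as needed.

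There is no genuine difficulty in this lemma; the only thing that needs care — and hence the ``main obstacle'' — is keeping the indexing straight: correctly identifying $(X_R)_r$ with $X_{(R,r)}$, and, in the first part, recognising that the sets indexed by length-$N$ tuples refine those indexed by shorter tuples. Once this is in place, both statements follow immediately. An alternative for the second part is to combine the first part with the identity (\ref{orgtarget20}), $\tower{X}[\ge L] = \sum_{R \in \ZZ_p^L} \tower{X_R}$, but the route above is shorter.
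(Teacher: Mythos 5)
Your proof is correct and coincides with the paper's approach: the paper states this lemma as following ``by definition,'' and your argument is exactly the straightforward unwinding that justifies it --- the refinement identity expressing $\size{X_{R'}}$ as a sum of $\size{X_R}$ over length-$N$ extensions $R$ of $R'$, together with the identification $(X_R)_r = X_{(R,r)}$ and the fact that a $p$-core is determined by the branch sizes via (\ref{orgtarget15}). No gaps; nothing further is needed.
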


\comment{A sufficient condition for a position to have 0-option}
\label{sec:orgheadline55}
The following lemma gives a sufficient condition for a position to have a \(p^0\)-option.
\begin{lem}
 \comment{Lem. borrow}
\label{sec:orgheadline56}
\label{orgtarget26}
Let \(X\) be a non-terminal position with order \(M\). Choose \(n \in \NN\) so that the size of \(\pshift{X}{\ccn}\) defined in (\ref{orgtarget23}) is a multiple of \(p^M\).
If 
\begin{description}
\item[{\textnormal{(P0)}}] \(\quad (\pshift{X}{\ccn})_{\ccs - 1} \not \equiv (\pshift{X}{n})_\ccs \pmod{p^M}  \quad \tforsome \ccs \in \ZZ_p\),
\end{description}
then \(X\) has a \(p^0\)-option.
 
\end{lem}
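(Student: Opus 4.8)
The plan is to exhibit the $p^0$-option explicitly, moving a single coin of $X$ down by one square; the content of (P0) is exactly what tells us which coin to move.

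First I would normalize $X$. Since $\lambda(\pshift{X}{n}) = \lambda(X)$, replacing $X$ by $W := \pshift{X}{n}$ alters neither the $p$-core tower, nor the order $M$, nor (via the obvious correspondence of coins) the set of $p^0$-options, and it touches neither (P0), which is already phrased in terms of $\pshift{X}{n}$, nor the conclusion. So I may assume $\size{X}$ is a multiple of $p^M$. If $M = 0$ there is nothing to prove, a $p^0$-option being then the same as a $p^*$-option, one of which exists by Lemma~\ref{orgtarget34}; assume therefore $M \ge 1$. Then $\ord(X) = M$ gives $\tower{X}[L] = 0$ for $L < M$, so by (\ref{orgtarget16}) every branch $X_R$ with $R \in \ZZ_p^{L}$ and $L < M$ has empty $p$-core, whence by (\ref{orgtarget15}) the branch sizes at every level below $M$ are as balanced as possible (consecutive ones differing by at most $1$). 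Using the identification $(X_t)_R = X_{(t,R)}$ for $t \in \ZZ_p$, $R \in \ZZ_p^{M}$, condition (P0) then reads: there exist $s \in \set{1, \ldots, p-1}$ and $R \in \ZZ_p^{M}$ with $\size{X_{(s-1, R)}} \neq \size{X_{(s, R)}}$.

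Next I would choose the coin. The aim is to produce $s \in \set{1, \ldots, p-1}$, $R \in \ZZ_p^{M}$ and an element $u \in X_{(s, R)} \setminus X_{(s-1, R)}$; then the coin $x \in X$ whose units digit is $s$, whose next $M$ digits are those of $R$, and whose remaining digits are those of $u$, satisfies $x - 1 \notin X$ (its units digit is $s - 1$, the next digits are again those of $R$, and $u \notin X_{(s-1,R)}$), so $Y := X \cup \Set{x - 1} \setminus \Set{x}$ is a legitimate position obtained from $X$ by deleting a single corner cell, hence an option of $X$. To pass from the mere size-inequality furnished by (P0) to an actual set-difference I would use the balancedness recorded above (at level $1$, and recursively below level $M$); a few extremal configurations may instead force the move to be taken from a branch adjacent to the one witnessing (P0), or, in a borderline case, from a coin of units digit $0$ (carrying the borrow into the higher digits), and I expect these to yield to the same structural facts.

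Finally I would verify that $Y$ is a $p^0$-option, i.e. that (1) $\tower{Y}[L] \equiv \tower{X}[L] - 1 \pmod p$ for $0 \le L \le M$ and (2) $\tower{Y}[\ge M+1] \succeq \tower{X}[\ge M+1]$. The case $L = 0$ of (1) is automatic: $\hook{Y}[0] = \size{\lambda(Y)} = \size{\lambda(X)} - 1 = \hook{X}[0]$, and $\tower{Z}[0] \equiv \hook{Z}[0] \pmod p$ for every position $Z$ by (\ref{orgtarget18}). For $1 \le L \le M$, and for (2), I would descend into the tower by (\ref{orgtarget20}): writing $v := \pexp{x}[\ge 1]$, the move changes only the level-$1$ branches $X_s$ and $X_{s-1}$, into $X_s \setminus \Set{v}$ and $X_{s-1} \cup \Set{v}$, so that
\[
 \tower{Y}[\ge 1] - \tower{X}[\ge 1] = \bigl(\tower{X_s \setminus \Set{v}} - \tower{X_s}\bigr) + \bigl(\tower{X_{s-1} \cup \Set{v}} - \tower{X_{s-1}}\bigr) ,
\]
and it suffices to show that this sequence is $\equiv -1 \pmod p$ in its coordinates $0, \ldots, M-1$ (which gives (1) for $1 \le L \le M$) while its coordinates $M, M+1, \ldots$, added to $\tower{X}[\ge M+1]$, do not decrease it in the order $\prec$ (which is (2)). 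The balanced structure of the sub-branches below level $M$ is exactly what forces the single corner-removal to propagate as a uniform $-1$ across all $M$ bottom rows of the $p$-core tower rather than at one level only, and the tail statement then follows by an argument in the spirit of the proof of Lemma~\ref{orgtarget34}. Carrying out this cascading bookkeeping --- and arranging the choices of the previous paragraph so that it goes through --- is the crux, and the step I expect to be the main obstacle; granting it, the lemma follows.
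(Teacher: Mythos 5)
Your skeleton follows the paper's strategy (normalize so that $\size{X}$ is a multiple of $p^M$, dispose of $M=0$ via Lemma \ref{orgtarget34}, use (P0) together with the branch balance forced by $\ord(X)=M$ to move a single coin down by one square, then check the two conditions defining a $p^0$-option), but there is a genuine gap at precisely the two points you defer. (i) Choosing the coin: your restatement of (P0) only gives $s \neq 0$ and $R \in \ZZ_p^M$ with $\size{X_{(s-1,R)}} \neq \size{X_{(s,R)}}$, whereas what is needed is an index $T \in \ZZ_p^{M+1}$ with \emph{nonzero units digit} and the strict inequality $\size{X_{T \ominus 1}} < \size{X_T}$ in this direction. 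If your inequality points the wrong way, cycling the units digit may only produce the crossing at $0$ (i.e.\ $\size{X_{(0,R)}} > \size{X_{(p-1,R)}}$), which is useless; the paper's Claim repairs this by also varying the top digit, using that all level-$M$ branch sizes equal $\size{X}/p^M$ to find another last digit at which the inequality reverses, and then rerunning the cycle argument in the units digit. That is exactly the content hiding behind your ``few extremal configurations''. (ii) The tail condition: an arbitrary $u \in X_{(s,R)} \setminus X_{(s-1,R)}$ gives a legal option, but nothing guarantees $\tower{Y}[\ge M+1] \succeq \tower{X}[\ge M+1]$; the paper secures this by choosing the digits of the coin above level $M+1$ greedily (Lemma \ref{orgtarget51}), which forces $\hook{Y}[L] \ge \hook{X}[L]$ for all $L \ge M+1$ via (\ref{orgtarget49}). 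You yourself call these steps ``the crux''; they are, and they are not routine bookkeeping.

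Moreover, the borderline case you propose to tolerate---moving a coin with units digit $0$ and letting the borrow propagate---cannot be made to work: by (\ref{orgtarget49}) with $\ccH = 0$ and $L = 1$, the level-one balance $\size{X_r} = \size{X}/p$ for all $r \in \ZZ_p$ gives $\hook{Y}[1] - \hook{X}[1] = -\delta_{\pexp{x}[0]} - 1 = -2 \not\equiv -1 \pmod{p}$, so condition (1) in the definition of a $p^0$-option already fails at $L = 1$; avoiding exactly this is why the paper insists on a witness $T$ with $\pexp{T}[0] \neq 0$. The parts you do carry out (the reduction to $\size{X} \equiv 0 \pmod{p^M}$, the case $M = 0$, the congruence at $L = 0$, and the balance of branch sizes at levels up to $M$, which is in fact exact equality rather than ``differing by at most $1$'') are correct and agree with the paper.
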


\comment{Postpone the proof}
\label{sec:orgheadline57}
We postpone the proof of this lemma to Section \ref{orgtarget35}.

We show that the condition (P0) is independent of the choice of \(\ccn\),
that is, \((\pshift{X}{\ccn})_{\ccs - 1} \not \equiv (\pshift{X}{n})_\ccs \pmod{p^M}\) for some \(\ccs \in \ZZ_p\) if and only
if \((\pshift{X}{\ccn + p^M})_{\cct - 1} \not \equiv (\pshift{X}{n + p^M})_\cct \pmod{p^M}\) for some \(\cct \in \ZZ_p\).
We may assume that \(n = 0\).
Since every position with order 0 satisfies (P0), we may also assume that \(M > 0\).
Let \(\delta_x = 1\) if \(x = 0\), and \(\delta_x = 0\) otherwise.
We have \(\ssize{(\pshift{X}{p^\ccM})_{\ccR \oplus p^\ccM}} = \ssize{X_\ccR} + \delta_{(\ccR_\ccM) + 1}\) for each \(\ccR \in \ZZ_p^{M + 1}\)
because \(\pexp{(x + p^M)}[< M + 1] = \pexp{x}[< M + 1] \oplus p^M\) for each \(x \in \NN\).
Note that \((R \ominus 1)_\ccM = R_\ccM\) since \(\ccM > 0\).
This implies that
\[
 \msize{(\pshift{X}{p^M})_{R \oplus p^M}}  - \msize{(\pshift{X}{p^M})_{R \oplus p^M \ominus 1}} = \msize{X_{\ccR}}  - \msize{X_{\ccR \ominus 1}}.
\]
Therefore (P0) is independent of the choice of \(\ccn\).

\comment{Proof of A1}
\label{sec:orgheadline58}
\vspace{1em}

\noindent
\emph{\textbf{proof of (A1) assuming Lemmas \ref{orgtarget34} and \ref{orgtarget26}.}}
Let \(M = \ord(X)\).
We have shown (A1) for \(M = 0\). Suppose that \(M > 0\).
We may assume that \(\ssize{X} \equiv 0 \pmod{p^M}\).

We show that \(X\) satisfies (P0).
Assume that \(X\) does not satisfy (P0).
Then by Lemma \ref{orgtarget38}, \(\tower{X_{\ccr - 1}}[M - 1] = \tower{X_{\ccr}}[M - 1]\) for each \(r \in \ZZ_p\).
Thus 
\[
 0 \neq \tower{X}[M] = \sum_{\ccr \in \ZZ_p} \tower{X_\ccr}[M - 1] = p \tower{X_0}[M - 1] \ge p,
\]
which contradicts \(\tower{X} = \btower{X}\).
Hence \(X\) satisfies (P0).

The position \(X\) has a \(p^0\)-option \(Y\) by Lemma \ref{orgtarget26}.
Since 
\[
 \size{\lambda(Y)} = \size{\lambda(X)} - 1 = (\underbrace{p - 1, \ldots, p - 1}_M, \tower{X}[M] - 1, \tower{X}[M + 1], \tower{X}[M + 2], \ldots)  
\]
and \(\tower{Y}[L] \equiv \tower{X}[L] - 1 \equiv \pexp{\size{\lambda(Y)}}[L] \pmod{p}\) for \(0 \le L \le M\),
it follows from Lemma \ref{orgtarget36} that
\(\btower{Y} = \size{\lambda(Y)} = \size{\lambda(X)} - 1 = \btower{X} - 1\).

\qed
\subsection{Peak digits}
\label{sec:orgheadline74}
\label{orgtarget30}
\comment{Intro}
\label{sec:orgheadline60}
It remains to prove (A2).
The idea of its proof is to focus on peak digits.
To define peak digits, we introduce \(p^*\)-descendants.
\begin{dfn}[\(p^*\)-paths and \(p^*\)-descendants]
 \comment{Dfn. [\(p^*\)-paths and \(p^*\)-descendants]}
\label{sec:orgheadline61}
Let \(n \in \NN\).
Let \((X^0, X^1, \ldots, X^\ccn)\) be a position sequence.
If \(X^{i+1}\) is a \(p^*\)-option of \(X^i\) for \(0 \le i \le n - 1\),
then this sequence is called a \emph{\(p^*\)-path} from \(X^0\) to \(X^\ccn\),
and \(X^\ccn\) is called a \emph{\(p^*\)-descendant} of \(X^0\).
 
\end{dfn}

\begin{dfn}[peak digits]
 \comment{Dfn. [peak digits]}
\label{sec:orgheadline62}
Let \(X\) be a position.
The \emph{peak digit} \(\pk(X)\) of \(X\) is defined by
\[
 \pk(X) = \max \Set{ \sL(\tower{X}, \tower{Y}) :
 Y \text{\ is a \(p^*\)-descendant of\ } X \text{\ with\ } \tower{Y} \succ \tower{X} },
\]
where $\max \emptyset = -1$ and
\[
 \sL(\tower{X}, \tower{Y}) = \max \Set{L \in \NN : \tower{X}[L] \neq \tower{Y}[L]}.
\]
 
\end{dfn}

\comment{Properties of peak digits}
\label{sec:orgheadline63}
It follows from (\ref{orgtarget19}) that if \(\pk(X) > -1\), then \(\pk(X) > \ord(X) \ge 0\).
Peak digits also have the following properties.
\begin{lem}
 \comment{Lem. peak-digit}
\label{sec:orgheadline64}
\label{orgtarget39}
If \(Y\) is a \(p^*\)-option of a position \(X\), then the following assertions hold:
\begin{enumerate}
\renewcommand{\labelenumi}{\textnormal{(\arabic{enumi})}}
\item $\tower{Y}[\ge N] = \tower{X}[\ge N]$, where $N = \max{\Set{\pk(X), \ord(X)}} + 1$.
\item $\pk(Y) \le \pk(X)$.
\end{enumerate}
 
\end{lem}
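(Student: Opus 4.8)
The plan is to analyze how a single $p^*$-option $Y$ of $X$ differs from $X$ in the $p$-core tower, and to see that a $p^*$-option cannot create a new "peak" above the existing one. Recall that if $M = \ord(X)$, then $Y$ is a $p^*$-option means $Y = X \cup \{x - p^M\} \setminus \{x\}$ for some $x \in X$ with $\tower{Y}[L] \equiv \tower{X}[L] - 1 \pmod p$ for $L = M$ (here $H = M$), and $\tower{Y}[\ge M+1] \succeq \tower{X}[\ge M+1]$. The size drops by one: $\size{\lambda(Y)} = \size{\lambda(X)} - 1$.

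\textbf{Proof of (1).} Set $N = \max\{\pk(X), \ord(X)\} + 1$, so $N > M$. First I would show $\tower{Y}[\ge N] \succeq \tower{X}[\ge N]$; this is immediate from the second defining property of a $p^*$-option, since truncating a $\succeq$-comparison at a higher index preserves it (the differing coordinates below $N$ are irrelevant once we pass to $\tower{}[\ge N]$). Now apply Lemma~\ref{orgtarget36}: we have $\size{\lambda(Y)} = \size{\lambda(X)} - 1$, and below index $N$ the two partitions $\size{\lambda(X)}$ and $\size{\lambda(X)}-1$ may differ, but at indices $\ge N$ there is a subtlety — a borrow from position $M < N$ when subtracting $1$ could propagate up to index $N$ only if all digits $\pexp{\size{\lambda(X)}}[L]$ for $M \le L < N$ vanish. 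The key point is that this does not happen: if $\pk(X) = -1$ then $N = M+1$ and I need $\pexp{\size{\lambda(X)}}[M] \neq 0$, which follows from $\tower{X}[M] \neq 0$ together with $\tower{X} \preceq \size{\lambda(X)}$ forcing agreement in the top differing digit; if $\pk(X) \ge 0$ then by the witness $p^*$-descendant of $X$ one argues the digit $\pexp{\size{\lambda(X)}}[L]$ for some $M \le L \le \pk(X)$ is nonzero, again blocking the borrow from reaching index $N$. Hence $\pexp{\size{\lambda(Y)}}[\ge N] = \pexp{\size{\lambda(X)}}[\ge N]$. Combining $\tower{Y}[\ge N] \succeq \tower{X}[\ge N]$, $\tower{X}[\ge N] = \pexp{\size{\lambda(X)}}[\ge N]$ (this equality itself needs justification — it holds because $\pk(X) < N$ and $\ord(X) < N$ force $\tower{X}[L] = \pexp{\size{\lambda(X)}}[L]$ for all $L \ge N$ via the definition of $\pk$ and Lemma~\ref{orgtarget36}), and $\tower{Y} \preceq \size{\lambda(Y)}$ from Lemma~\ref{orgtarget36}, we get $\tower{Y}[\ge N] \preceq \pexp{\size{\lambda(Y)}}[\ge N] = \pexp{\size{\lambda(X)}}[\ge N] = \tower{X}[\ge N] \preceq \tower{Y}[\ge N]$, so all are equal; thus $\tower{Y}[\ge N] = \tower{X}[\ge N]$.

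\textbf{Proof of (2).} Suppose $Z$ is a $p^*$-descendant of $Y$ with $\tower{Z} \succ \tower{Y}$; I must show $\sL(\tower{Y}, \tower{Z}) \le \pk(X)$. Since concatenating a $p^*$-path from $X$ to $Y$ with one from $Y$ to $Z$ gives a $p^*$-path from $X$ to $Z$, $Z$ is also a $p^*$-descendant of $X$. By part (1), $\tower{Y}$ and $\tower{X}$ agree on all indices $\ge N = \max\{\pk(X),\ord(X)\}+1$, so $\sL(\tower{X}, \tower{Y}) < N$, i.e. $\sL(\tower{X},\tower{Y}) \le \pk(X)$ (when $\tower{Y} \neq \tower{X}$). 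Now I would case-split on whether $\tower{Z} \succ \tower{X}$. If yes, then $\sL(\tower{X}, \tower{Z}) \le \pk(X)$ by definition of $\pk(X)$, and since $\tower{Y}$ agrees with $\tower{X}$ above $\pk(X)$, the sequences $\tower{Y}$ and $\tower{Z}$ also agree above $\pk(X)$, giving $\sL(\tower{Y}, \tower{Z}) \le \pk(X)$. If $\tower{Z} \not\succ \tower{X}$, then since $\tower{Z} \succ \tower{Y}$ and $\tower{Y}, \tower{X}$ agree above $\pk(X)$, any index where $\tower{Z}$ and $\tower{Y}$ differ must be $\le \pk(X)$ as well — otherwise at the top such index $\tower{Z}$ would exceed $\tower{X}$, contradiction. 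Either way $\pk(Y) \le \pk(X)$.

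\textbf{Main obstacle.} The delicate point is controlling the borrow in part (1): verifying that subtracting $1$ from $\size{\lambda(X)}$ (which happens at $p$-adic position $M = \ord(X)$, not position $0$, since the option removes a $p^M$-hook) does not propagate a carry past index $\pk(X)$. This requires knowing that some digit of $\size{\lambda(X)}$ strictly between $M$ and $N$ is nonzero, which must be extracted from the definition of $\pk(X)$ together with Lemma~\ref{orgtarget36} — essentially, the $p^*$-descendant witnessing $\pk(X)$ forces $\tower{X}$, hence $\size{\lambda(X)}$, to have a nonzero digit in that range. Getting this bookkeeping exactly right, including the boundary case $\pk(X) = -1$, is where the real work lies; the rest is formal manipulation of the $\prec$ order and the two defining properties of $p^*$-options.
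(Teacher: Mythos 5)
There is a genuine gap in your proof of part (1). Your chain runs through two intermediate claims that are false in general: that \(\tower{X}[\ge N] = \pexp{\size{\lambda(X)}}[\ge N]\), and that the borrow created by removing the hook cannot propagate past index \(N\), so that \(\pexp{\size{\lambda(Y)}}[\ge N] = \pexp{\size{\lambda(X)}}[\ge N]\). The paper's own example refutes both: take \(p = 3\) and \(X = \set{3,4,5}\), so \(\tower{X} = (0,3,0,\ldots)\), \(\ord(X) = 1\), and \(\pk(X) = -1\) (any proper descendant \(Z\) with \(\tower{Z} \succ (0,3,0,\ldots)\) would need \(\size{\lambda(Z)} = \sum_L \tower{Z}[L]\,3^L \ge 9\), impossible), hence \(N = 2\). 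Here \(\size{\lambda(X)} = 9 = (0,0,1)_3\), so \(\pexp{\size{\lambda(X)}}[\ge 2] = (1,0,\ldots) \neq (0,0,\ldots) = \tower{X}[\ge 2]\); and a \(p^*\)-option such as \(Y = \set{0,4,5}\) has \(\size{\lambda(Y)} = 6\), whose digits above index \(1\) disagree with those of \(9\), so the borrow does cross \(N\). (Note also that \(\size{\lambda(Y)} = \size{\lambda(X)} - p^M\), not \(\size{\lambda(X)} - 1\).) The underlying misconception is that a small peak digit controls the digits of \(\size{\lambda(X)}\): it does not, because \(\pk\) measures increases along \(p^*\)-descendants, while the carries you must exclude come from entries \(\tower{X}[L] \ge p\) at indices \(L < N\), which are perfectly compatible with \(\pk(X) = -1\) (as in the example, where \(\tower{X}[1] = p\)). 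The missing idea — and the paper's entire proof of (1) — is that the upper bound comes straight from the definition of the peak digit applied to \(Y\) itself: \(Y\) is a one-step \(p^*\)-descendant of \(X\), so if \(\tower{Y}[\ge \pk(X)+1] \succ \tower{X}[\ge \pk(X)+1]\) then \(\tower{Y} \succ \tower{X}\) with \(\sL(\tower{X},\tower{Y}) > \pk(X)\), contradicting maximality in the definition of \(\pk(X)\); hence \(\tower{Y}[\ge \pk(X)+1] \preceq \tower{X}[\ge \pk(X)+1]\), and combining with \(\tower{Y}[\ge \ord(X)+1] \succeq \tower{X}[\ge \ord(X)+1]\) from the definition of a \(p^*\)-option gives equality at all indices \(\ge N\). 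No digit or borrow analysis of \(\size{\lambda(X)}\) is needed.

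Your part (2) is close to the paper's argument, but the step ``\(\tower{Y}\) agrees with \(\tower{X}\) above \(\pk(X)\)'' is only what (1) delivers when \(\pk(X) \ge \ord(X)\). That is fine for \(\pk(X) \ge 0\), since by the remark following (\ref{orgtarget19}) one has \(\pk(X) > \ord(X)\); but when \(\pk(X) = -1\) the agreement is only above \(\ord(X)\), and your second case needs the additional observation that any witness \(Z\) for \(\pk(Y)\) satisfies \(\sL(\tower{Y},\tower{Z}) > \ord(Y) \ge \ord(X)\) (again a consequence of (\ref{orgtarget19})). The paper avoids the case split altogether: it takes \(K = \pk(Y) > \ord(Y) \ge \ord(X)\) and concludes \(\tower{Z}[\ge K] \succ \tower{Y}[\ge K] \succeq \tower{X}[\ge K]\), so \(Z\) already witnesses \(\pk(X) \ge K\).
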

\begin{proof}
 \comment{Proof.}
\label{sec:orgheadline65}
(1)\,
Since \(Y\) is a \(p^*\)-option of \(X\), it follows that \(\tower{Y}[\ge (\ord(X) + 1)] \succeq \tower{X}[\ge (\ord(X) + 1)]\).
In addition, by the definition of peak digits, \(\tower{Y}[\ge (\pk(X) + 1)] \preceq \tower{X}[\ge (\pk(X) + 1)]\).
Thus \(\tower{Y}[\ge N] = \tower{X}[\ge N]\).

\vspace{0.8em}
\noindent
(2)\,
Let \(K = \pk(Y)\).
If \(K = -1\), then it is clear.
Suppose that \(K > -1\). 
Then \(Y\) has a \(p^*\)-descendant \(Z\) such that
\(\tower{Z} \succ \tower{Y}\) and  \(\sL(\tower{Z}, \tower{Y}) = K\).
Since \(K > \ord(Y) \ge \ord(X)\), it follows that
\[
 \tower{Z}[\ge K] \succ \tower{Y}[\ge K] \succeq \tower{X}[\ge K].
\]
This implies that \(\tower{Z} \succ \tower{X}\) and \(\sL(\tower{Z}, \tower{Y}) \ge K\).
Since \(Z\) is also a \(p^*\)-descendant of \(X\), we have \(\pk(X) \ge K = \pk(Y)\).
 
\end{proof}

\begin{cor}
 \comment{Cor.}
\label{sec:orgheadline66}
\label{orgtarget40}
Let \(n\) be a positive integer.
If \((X^0, \ldots, X^{\ccn})\) is a \(p^*\)-path and \(N\) is at least \(\max \set{\pk(X^{0}), \ord(X^{\ccn - 1})} + 1\),
then \(\tower{X^i}[\ge N] = \tower{X^0}[\ge N]\) for \(0 \le i \le \ccn\).
 
\end{cor}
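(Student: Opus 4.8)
The plan is to induct on $n$, using Lemma \ref{orgtarget39} repeatedly along the $p^*$-path, together with the monotonicity of peak digits established in part (2) of that lemma. The subtlety is that $N$ is chosen relative to $\pk(X^0)$ and $\ord(X^{n-1})$, not relative to the intermediate data $\pk(X^i)$ or $\ord(X^i)$, so I must first confirm that $N$ dominates the thresholds arising at every step.

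First I would record two monotonicity facts along the path. By Lemma \ref{orgtarget39}(2), applied to the $p^*$-options $X^{i+1}$ of $X^i$, we get $\pk(X^0) \ge \pk(X^1) \ge \cdots \ge \pk(X^{n-1})$; in particular $\pk(X^i) \le \pk(X^0)$ for $0 \le i \le n-1$. Next, since each $X^{i+1}$ is a $p^*$-option of $X^i$, we have $\ord(X^i) = M_i$ where $M_i$ is the order of $X^i$, and the defining inequality $\tower{X^{i+1}}[\ge M_i + 1] \succeq \tower{X^i}[\ge M_i + 1]$ together with $\tower{X^{i+1}}[L] \equiv \tower{X^i}[L] - 1 \pmod p$ for $L = M_i$ forces $\ord(X^{i+1}) \ge \ord(X^i)$ whenever $X^{i+1}$ is still non-terminal; hence $\ord(X^0) \le \ord(X^1) \le \cdots \le \ord(X^{n-1})$. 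Combining these, for every $i$ with $0 \le i \le n-1$ we have
\[
 \max\Set{\pk(X^i), \ord(X^i)} + 1 \le \max\Set{\pk(X^0), \ord(X^{n-1})} + 1 \le N.
\]

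Now I would run the induction. For $i = 0$ the claim is trivial. Suppose $\tower{X^i}[\ge N] = \tower{X^0}[\ge N]$ for some $i$ with $0 \le i \le n-1$. Apply Lemma \ref{orgtarget39}(1) to the $p^*$-option $X^{i+1}$ of $X^i$: it gives $\tower{X^{i+1}}[\ge N_i] = \tower{X^i}[\ge N_i]$ where $N_i = \max\Set{\pk(X^i), \ord(X^i)} + 1$. By the displayed inequality above, $N_i \le N$, so restricting to digits indexed by $L \ge N$ yields $\tower{X^{i+1}}[\ge N] = \tower{X^i}[\ge N] = \tower{X^0}[\ge N]$, completing the induction step. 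Taking $i = n$ gives the conclusion. (If some $X^{i+1}$ in the path were terminal, the path would have stopped there; since $(X^0, \ldots, X^n)$ is given as a $p^*$-path, every $X^i$ with $i \le n-1$ is non-terminal, so $\ord$ is finite and the monotonicity argument applies.)

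The only real obstacle is the bookkeeping in the second paragraph: verifying that the single threshold $N \ge \max\set{\pk(X^0), \ord(X^{n-1})} + 1$ dominates every local threshold $N_i$. This rests entirely on the two monotonicities — $\pk$ non-increasing and $\ord$ non-decreasing along a $p^*$-path — and once those are in hand the induction is mechanical. I expect the $\ord$-monotonicity to require the short argument sketched above from the definition of $p^*$-option, since it is not stated separately as a lemma.
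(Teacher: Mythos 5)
Your proposal is correct and follows essentially the same route as the paper's proof, which likewise sets \(N^i = \max\Set{\pk(X^i), \ord(X^i)} + 1\), applies Lemma \ref{orgtarget39} at each step to get \(\tower{X^{i+1}}[\ge N^i] = \tower{X^i}[\ge N^i]\) together with the non-increasing behaviour of \(\pk\), and combines this with the non-decreasing behaviour of \(\ord\) along the path to conclude \(N \ge N^i\) for all \(i\). One small caveat: the \(\ord\)-monotonicity does not actually follow from the two defining conditions of a \(p^*\)-option alone (they say nothing about digits below \(\ord(X^i)\)); it rests on the fact that removing a \(p^M\)-hook leaves \(\tower{X}[L] = 0\) for \(L < M\), as one reads off from Lemma \ref{orgtarget48} — though the paper likewise asserts this monotonicity without proof, so this does not distinguish your argument from the paper's.
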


\begin{proof}
 \comment{Proof.}
\label{sec:orgheadline67}
For \(0 \le i \le \ccn - 1\), let \(N^i = \max\Set{\pk(X^{i}), \ord(X^{i})} + 1\).
It follows from Lemma \ref{orgtarget39} that \(\tower{X^{i + 1}}[\ge N^i] = \tower{X^i}[\ge N^i]\) and
\[
 \pk(X^0) \ge \pk(X^1) \ge \cdots \ge \pk(X^{\ccn - 1}).
\]
Since
\[
  \ord(X^0) \le \ord(X^1) \le \cdots \le \ord(X^{\ccn - 1}),
\]
we have \(N \ge N^i\) for \(0 \le i \le \ccn - 1\), and the corollary follows.
 
\end{proof}

\comment{Ensure that X has a p*-descendant satisfying P0}
\label{sec:orgheadline68}
The following result is the key to proving Theorem 1.1'.
\begin{lem}
 \comment{Lem. borrowable-position}
\label{sec:orgheadline69}
\label{orgtarget41}
Let \(X\) be a position with peak digit \(K\).
If \(K\) is positive, then \(X\) has a descendant \(Y\) with the following four properties:
\begin{enumerate}
\item \(\ord(Y) = K\).
\item Either \(\tower{X}[K] < \tower{Y}[K] < p\;\) or \(\; \tower{Y}[K] = p\).
\item \(\tower{Y}[\ge K + 1] = \tower{X}[\ge K + 1]\).
\item \(Y\) satisfies (P0).
\end{enumerate}
 
\end{lem}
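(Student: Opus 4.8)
The plan is to build the descendant $Y$ by walking along a $p^*$-path from $X$ up to the point where the peak digit is "realized," then descending one more step to fix property (P0). Since $\pk(X) = K > 0$, by definition $X$ has a $p^*$-descendant $Z$ with $\tower{Z} \succ \tower{X}$ and $\sL(\tower{X}, \tower{Z}) = K$; in particular $\tower{Z}[K] > \tower{X}[K]$ and $\tower{Z}[\ge K+1] = \tower{X}[\ge K+1]$. First I would choose, among all such $p^*$-descendants $Z$, one that is minimal in the sense that no proper $p^*$-descendant of $Z$ still satisfies $\tower{(\cdot)} \succ \tower{X}$ with $\sL = K$ and a $K$th digit no smaller than $\tower{Z}[K]$; equivalently, I would take the first position $X^j$ along a $p^*$-path $(X^0 = X, X^1, \ldots)$ at which $\tower{X^j}[K]$ attains its eventual value. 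Corollary \ref{orgtarget40} guarantees that once we pass $\ord = K$ all higher digits are frozen, so this is well-defined, and along the way $\ord(X^i)$ is non-decreasing, reaching $K$ exactly when the $K$th digit stops changing (using that $\pk(X^i) \le \pk(X) = K$ from Lemma \ref{orgtarget39}(2) and that a $p^*$-option at order $< K$ below $K$ can only decrement digits $\le M$).

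Next I would verify properties (1)--(3) for this $Y = X^j$. Property (3), $\tower{Y}[\ge K+1] = \tower{X}[\ge K+1]$, is immediate from Corollary \ref{orgtarget40} since $K \ge \ord(X^{j-1}) + 1$ is not quite automatic — here one must check $\ord(X^{j-1}) < K$, which holds precisely because $X^{j-1}$ still had its $K$th digit changing. Property (1), $\ord(Y) = K$: the digits below $K$ have all been driven to $0$ by the successive $p^*$-options (each $p^*$-option at order $M$ decrements digits $\ge M$ mod $p$, and we keep going until order reaches $K$), while $\tower{Y}[K] > \tower{X}[K] \ge 0$ so it is nonzero. Property (2) is the dichotomy $\tower{X}[K] < \tower{Y}[K] < p$ or $\tower{Y}[K] = p$: here I would argue that if $\tower{Y}[K] = p$ we are done, and otherwise $\tower{Y}[K] < p$ means the $K$th digit genuinely "carried" into $\tower{Y}$ as a $p$-ary digit without overflow, and the minimality/maximality choice of $Y$ forces $\tower{Y}[K] > \tower{X}[K]$; the point is that $\tower{Y}[K]$ cannot exceed $p$ because $\tower{Y}[K] = \sum_{r \in \ZZ_p} \tower{(Y_r)}[K-1]$ type-decompositions (as in the proof of (A1)) bound it, and a value strictly between $\tower{X}[K]$ and $p$ is what survives.

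Finally, (P0). If $Y$ already has order $0$ it satisfies (P0) trivially, but here $\ord(Y) = K > 0$, so I must check the condition $(\pshift{Y}{n})_{s-1} \not\equiv (\pshift{Y}{n})_s \pmod{p^K}$ for some $s \in \ZZ_p$, after shifting so $\ssize{\pshift{Y}{n}}$ is a multiple of $p^K$. The argument should mirror the contrapositive used in the proof of (A1): if (P0) fails, then $\tower{(Y_{r-1})}[K-1] = \tower{(Y_r)}[K-1]$ for all $r$, forcing $\tower{Y}[K] = p\,\tower{(Y_0)}[K-1]$, so $\tower{Y}[K]$ is a multiple of $p$; combined with property (2) this forces $\tower{Y}[K] = p$ (since $0 < \tower{Y}[K] \le p$ and it cannot be strictly between). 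So the only way (P0) could fail is in the case $\tower{Y}[K] = p$. I expect this last case to be the main obstacle: when $\tower{Y}[K] = p$ I cannot conclude (P0) from the digit count alone, and I would instead need to go back one more step along the $p^*$-path — i.e.\ replace $Y$ by a carefully chosen $p^*$-option of $X^{j-1}$, or a sibling descendant — exploiting the freedom in how the $K$th digit was assembled (it came from $p$ contributions of $\tower{(\cdot_r)}[K-1]$, and by reallocating one unit among the $Y_r$ via a further hook removal one can break the equality $(\pshift{Y}{n})_{s-1} \equiv (\pshift{Y}{n})_s$). Making that reallocation precise, while preserving properties (1)--(3), is the delicate part of the proof.
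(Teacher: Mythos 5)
Your skeleton follows the paper's opening moves (pass to a $p^*$-descendant realizing the peak digit, continue to a position of order $K$, and observe that (P0) is automatic when the $K$th digit is not a multiple of $p$), but two things go wrong. First, your claim that $\tower{Y}[K]$ cannot exceed $p$ is unjustified: $\tower{Y}[K]=\sum_{r\in\ZZ_p}\tower{Y_r}[K-1]$ and the summands need not be $0$ or $1$, so the case $\tower{Y}[K]>p$ is genuinely possible. The paper deals with it by choosing $Z$ to \emph{maximize} $\tower{Z}[K]$ over all $p^*$-descendants of $X$ (not "the first position along one path at which the $K$th digit attains its eventual value"); this maximality is what forces $\pk(Z)<K$, hence $\pk(Y)=-1$ for the $(K-1)$-rounded descendant $Y$, which both freezes the digits $\ge K$ along the remaining walk (Corollary \ref{orgtarget40}) and makes Lemma \ref{orgtarget46} applicable to reduce the digit from a value $\ge p+1$ to exactly $p$ while securing (P0). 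Your path-local choice provides neither $\pk=-1$ nor a treatment of the $>p$ case.

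Second, and more seriously, the case $\tower{Y}[K]=p$ with (P0) failing --- which you explicitly defer as "the delicate part" --- is the actual content of the lemma and occupies all of Section \ref{orgtarget42}. The paper's resolution is not a single local reallocation: it analyzes how a $p^*$-option changes the pair $\bigl(\tower{X_{S}}[0],\tower{X_{S-p^M}}[0]\bigr)$ (Lemmas \ref{orgtarget57}, \ref{orgtarget58}, \ref{orgtarget63}), splits according to whether some local digit $\tower{X^j_S}[0]$ is $\ge 2$ along the tail of the path (in which case one manufactures a rounded descendant with a local digit $\ge 2$, which cannot exhibit the failure pattern (\ref{orgtarget56})) or all local digits are $0/1$ (in which case one replaces the $p^*$-option entering $Z$ by a "swapped" option via Lemma \ref{orgtarget63}(1) and transports the remainder of the path by the congruent-path Lemma \ref{orgtarget55}, obtaining a rounded descendant congruent to $Y$ modulo $p^K$ in which two residue classes have their $0/1$ values exchanged, breaking the cyclic pattern (\ref{orgtarget56}) while preserving properties (1)--(3)). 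None of this machinery, nor any substitute for it, appears in your proposal, so the proof is incomplete precisely at its central point.
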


\comment{Deferred}
\label{sec:orgheadline70}
The proof of this lemma is deferred to Section \ref{orgtarget42}.

The next lemma determines \(\msg(X)\) for certain positions \(X\).
\begin{lem}
 \comment{Lem. esp-p0}
\label{sec:orgheadline71}
\label{orgtarget43}
Let \(X\) be a position with the following three properties:
\begin{enumerate}
\item \(\tower{X}[M] = p\), where \(M = \ord(X)\).
\item \(\tower{X}[\ge M + 1] = \btower{X}[\ge M + 1] (= (\btower{X}[M + 1], \btower{X}[M + 2], \ldots))\).
\item \(X\) satisfies (P0).
\end{enumerate}
Then \(X\) has a \(p^0\)-option \(Y\) with \(\btower{Y} = \size{\lambda(X)} - 1\).
In particular, if \(\btower{Y} = \sg(Y)\), then \(\msg(X) = \size{\lambda(X)} - 1\).
 
\end{lem}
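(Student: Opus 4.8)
The plan is to translate the three hypotheses into an explicit \(p\)-adic description of \(\size{\lambda(X)}\), then produce the desired \(p^0\)-option via Lemma \ref{orgtarget26}, and finally identify \(\btower{Y}\) with the help of Lemma \ref{orgtarget36}. First I would record the digits of \(\size{\lambda(X)}\): since \(\ord(X) = M\), we have \(\tower{X}[L] = 0\) for \(L < M\), and hence \(\btower{X}[L] = 0\) for \(0 \le L \le M\) (at \(L = M\) because \(\tower{X}[M] = p \equiv 0 \pmod p\)). Combining this with hypotheses (1) and (2) and the identity (\ref{orgtarget19}) gives
\[
 \size{\lambda(X)} = p \cdot p^M + \sum_{L \ge M + 1} \btower{X}[L]\, p^L = p^{M + 1} + \btower{X};
\]
in particular \(\btower{X} < \size{\lambda(X)}\), and, since the first \(M + 1\) digits of \(\btower{X}\) vanish, there is no carrying in
\[
 \size{\lambda(X)} - 1 = (\underbrace{p - 1, \ldots, p - 1}_{M + 1},\, \btower{X}[M + 1],\, \btower{X}[M + 2],\, \ldots)_p.
\]

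By hypothesis (3) and Lemma \ref{orgtarget26}, \(X\) has a \(p^0\)-option \(Y\), and \(\size{\lambda(Y)} = \size{\lambda(X)} - 1\) by the definition of a \(p^0\)-option. That same definition gives \(\tower{Y}[L] \equiv \tower{X}[L] - 1 \equiv p - 1 \pmod p\) for \(0 \le L \le M\), and
\[
 \tower{Y}[\ge M + 1] \succeq \tower{X}[\ge M + 1] = \btower{X}[\ge M + 1] = \pexp{\size{\lambda(Y)}}[\ge M + 1],
\]
the middle equality being hypothesis (2). By Lemma \ref{orgtarget36} this last relation must be an equality, so \(\tower{Y}\) agrees with \(\pexp{\size{\lambda(Y)}}\) above level \(M\). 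I claim that in fact \(\tower{Y} = \pexp{\size{\lambda(Y)}}\): otherwise, letting \(N \le M\) be the highest level of disagreement, the bound \(\tower{Y} \preceq \size{\lambda(Y)}\) of Lemma \ref{orgtarget36} forces \(\tower{Y}[N] < \pexp{\size{\lambda(Y)}}[N] = p - 1\), which is impossible because \(\tower{Y}[N]\) is a non-negative integer congruent to \(p - 1\) modulo \(p\). Consequently every digit of \(\tower{Y}\) is less than \(p\), so \(\btower{Y}[L] = \tower{Y}[L]\) for all \(L\), and hence \(\btower{Y} = \sum_{L} \tower{Y}[L]\, p^L = \size{\lambda(Y)} = \size{\lambda(X)} - 1\) by (\ref{orgtarget19}).

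Finally, \(Y\) is a descendant of \(X\) with \(\size{\lambda(Y)} = \size{\lambda(X)} - 1\); so if \(\btower{Y} = \sg(Y)\), then \(\sg(Y) = \size{\lambda(Y)}\), giving \(\msg(X) \ge \size{\lambda(X)} - 1\). For the reverse inequality, any descendant \(Z \neq X\) occurring in the definition of \(\msg(X)\) is proper, hence \(\size{\lambda(Z)} \le \size{\lambda(X)} - 1\); and \(Z = X\) is excluded since \(\sg(X) = \btower{X} < \size{\lambda(X)}\) by Theorem 1.1'. Therefore \(\msg(X) = \size{\lambda(X)} - 1\). The step requiring the most care is verifying that every digit of \(\tower{Y}\) is genuinely \(< p\) --- that no carries occur at levels \(\le M\), where the \(p^0\)-option condition controls \(\tower{Y}\) only modulo \(p\) --- and this is exactly where the \(\preceq\)-estimate of Lemma \ref{orgtarget36} is indispensable.
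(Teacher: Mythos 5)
Your proof of the main clause is correct and follows the paper's own argument step for step: use hypothesis (3) with Lemma \ref{orgtarget26} to produce a \(p^0\)-option \(Y\); compute \(\size{\lambda(Y)} = \size{\lambda(X)} - 1 = (p - 1, \ldots, p - 1, \btower{X}[M + 1], \btower{X}[M + 2], \ldots)_p\) using \(\ord(X) = M\), \(\tower{X}[M] = p\), hypothesis (2), and (\ref{orgtarget19}); deduce \(\tower{Y}[\ge M + 1] = \pexp{\size{\lambda(Y)}}[\ge M + 1]\) from Lemma \ref{orgtarget36}; and finish with the congruences \(\tower{Y}[L] \equiv p - 1 \pmod{p}\) for \(0 \le L \le M\). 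Your contradiction at the highest level of disagreement is simply a spelled-out version of the paper's terser final step, so the core of the proposal matches the paper.

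One caution concerns the ``in particular'' clause, which the paper's own proof does not discuss. Your lower bound \(\msg(X) \ge \size{\lambda(X)} - 1\) is immediate, but your upper bound excludes \(Z = X\) by citing Theorem 1.1\('\) for \(X\) itself, and Lemma \ref{orgtarget43} is invoked inside the induction that proves Theorem 1.1\('\) (Case 2 of the proof of (A2)). Read literally this is a forward reference: at that point \(\sg(X) = \btower{X}\) is not yet known for positions of size \(\size{\lambda(X)}\), and the stated hypotheses (including \(\btower{Y} = \sg(Y)\) for the single option \(Y\)) do not by themselves rule out \(\sg(X) = \size{\lambda(X)}\). The circularity is harmless but should be flagged: in the application inside the induction the position playing the role of \(X\) is a proper descendant of the induction variable, so \(\sg(X) = \btower{X} < \size{\lambda(X)}\) is supplied by the induction hypothesis; moreover only the inequality \(\msg(X) \ge \size{\lambda(X)} - 1\) is actually needed there, so the equality can be deferred until Theorem 1.1\('\) is fully established. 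With that bookkeeping remark your argument is complete.
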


\comment{Proof esp-p0}
\label{sec:orgheadline72}
\noindent
\emph{\textbf{proof assuming Lemma \ref{orgtarget26}.}}
The position \(X\) has a \(p^0\)-option \(Y\) by Lemma \ref{orgtarget26}.
Since \(\size{\lambda(X)} = p^{M + 1} + \sum_{L \ge M + 1} \tower{X}[L]p^L\) and \(\tower{X}[\ge M + 1] = \btower{X}[\ge M + 1]\),
we have
\[
 \size{\lambda(Y)} = \size{\lambda(X)} - 1 = (\underbrace{p - 1, \ldots, p - 1}_{M + 1}, \tower{X}[M + 1], \tower{X}[M + 2], \ldots).
\]
Thus \(\tower{Y}[\ge M + 1] = \pexp{\ssize{\lambda(Y)}}[\ge M + 1]\) by Lemma \ref{orgtarget36}.
Furthermore, since \(\tower{Y}[L] \equiv \tower{X}[L] - 1 \equiv p - 1 \pmod{p}\) for \(0 \le L \le M\),
we find that \(\tower{Y} = \ssize{\lambda(Y)} = \ssize{\lambda(X)} - 1\).

\qed

\comment{Proof Theorem 1.1}
\label{sec:orgheadline73}
\vspace{1em}

\noindent
\emph{\textbf{proof of Theorem 1.1' assuming Lemmas \ref{orgtarget31}, \ref{orgtarget34}, \ref{orgtarget26}, and \ref{orgtarget41}.}}
It remains to show (A2).
By the induction hypothesis, we may assume that \(\sg(X_r) = \btower{X_r} = \ssize{\lambda(X_r)}\) for each \(r \in \ZZ_p\). 

Since \(\btower{X} < \size{\lambda(X)}\), we see that \(\tower{X}[L] \ge p\) for some \(L \in \NN\).
Let
\[
 N = \max \Set{L \in \NN : \tower{X}[L] \ge p}.
\]
We divide into two cases.

\noindent
\textbf{Case 1: \(N > 0\).} 
Since \(\btower{X}[N] < p \le \tower{X}[N] = \sum_{\ccr \in \ZZ_p} \tower{X_\ccr}[N - 1]\), there exist \(s^0, \ldots s^{p - 1} \in \ZZ_p\)
such that \(\sum_{\ccr \in \ZZ_p} s^\ccr = \btower{X}[N]\) and \(s^\ccr \le \tower{X_\ccr}[N - 1] = \btower{X_\ccr}[N - 1]\) for each \(\ccr \in \ZZ_p\).
Since \(\sg(X_\ccr) = \btower{X_\ccr}\), the position \(X_\ccr\) has a descendant \(Y_\ccr\) such that
\[
 \tower{Y_\ccr}[L] = \begin{cases}
 s^\ccr & \tif L = N - 1,\\
 \tower{X_\ccr}[L] & \tif L \neq N - 1.\\
 \end{cases}
\]
Let \(Y = [Y_\ccr]_{\ccr \in \ZZ_p}\) defined in (\ref{orgtarget14}).
Then \(\btower{Y} = \btower{X}\) and \(Y \neq X\).

\noindent
\textbf{Case 2: \(N = 0\).}
Let \(K\) be the peak digit of \(X\).

Suppose that \(K = -1\).
By Lemma \ref{orgtarget39}, every \(p^*\)-option \(Y\) of \(X\) satisfies \(\tower{Y} = (\tower{X}[0] - 1, \tower{X}[1], \tower{X}[2], \ldots)\).
This implies that \(X\) has a desired descendant.

Suppose that \(K > -1\).
Then \(X\) has a descendant \(Y\) satisfying the conditions in Lemma \ref{orgtarget41}.
If \(\tower{Y}[K] < p\), then \(\btower{Y} > \btower{X}\).
Suppose that \(\tower{Y}[K] = p\).
Since \(Y\) satisfies (P0), it follows from Lemma \ref{orgtarget43} that
\[
 \msg(Y) = \size{\lambda(Y)} - 1 =  (\underbrace{p - 1, \ldots, p - 1}_{K + 1}, \tower{X}[K + 1], \tower{X}[K + 2], \ldots) \ge \btower{X}.
\]

\qed

\subsection{Proof of Proposition \ref{orgtarget10}}
\label{sec:orgheadline81}
\label{orgtarget44}
Before proving Proposition \ref{orgtarget10}, we present an elementary property of \(p^*\)-options.
\begin{lem}
 \comment{Lem. XrYr}
\label{sec:orgheadline75}
\label{orgtarget45}
Let \(X\) be a position whose order \(M\) is positive.
Choose \(s \in \ZZ_p\) so that \(\ord(X_s) = M - 1\).
Let \(Y_s\) be a \(p^*\)-option of \(X_s\), and let \(Y_r = X_r\) for each \(r \in \ZZ_p \setminus \set{s}\).
Then \([Y_\ccr]_{\ccr \in \ZZ_p}\) is a \(p^*\)-option of \(X\).
 
\end{lem}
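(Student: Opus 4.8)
Let $Y = [Y_\ccr]_{\ccr \in \ZZ_p}$. The plan is to identify $Y$ explicitly as the position obtained from $X$ by moving a single coin down by $p^M$, and then to verify the two defining conditions of a $p^*$-option by pushing the corresponding conditions for $Y_s$ through the $p$-core tower recursion.

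First I would unwind the construction. Since $Y_s$ is a $p^*$-option of $X_s$ and $\ord(X_s) = M - 1$, there is a coin $x \in X_s$ with $x - p^{M-1} \ge 0$ and $Y_s = X_s \cup \Set{x - p^{M-1}} \setminus \Set{x}$. Because $Y_\ccr = X_\ccr$ for $\ccr \ne s$, comparing the definition (\ref{orgtarget14}) of $[X_\ccr]_\ccr$ with that of $[Y_\ccr]_\ccr$, and using $(s, z) = s + pz$, gives
\[
 Y = X \cup \Set{\tilde x - p^M} \setminus \Set{\tilde x}, \qquad \tilde x = s + p x \in X,
\]
since $(s, x - p^{M-1}) = \tilde x - p^M$. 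Here $\tilde x - p^M = s + p(x - p^{M-1}) \ge 0$, and $\tilde x - p^M \notin X$ because $x - p^{M-1} \notin X_s$; thus $Y$ is a genuine position obtained from $X$ by moving one coin to a previously empty, lower-numbered square, which is always a legal move in $\cW^\ccm_{p, \ccm + 1}$. Hence $Y$ is an option of $X$ of the shape required in the definition of a $p^M$-option, and since $M = \ord(X)$, a $p^M$-option of $X$ is precisely a $p^*$-option.

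Next I would compute $\tower{Y}$ from $\tower{Y_s}$ and $\tower{X_s}$. Applying (\ref{orgtarget20}) with $L = 1$ to both $X$ and $Y$, and using $Y_\ccr = X_\ccr$ for $\ccr \ne s$, one gets, for every $L \ge 1$,
\[
 \tower{Y}[L] - \tower{X}[L] = \sum_{\ccr \in \ZZ_p} \left( \tower{Y_\ccr}[L - 1] - \tower{X_\ccr}[L - 1] \right) = \tower{Y_s}[L - 1] - \tower{X_s}[L - 1].
\]
Taking $L = M$ and using condition (1) for the $p^*$-option $Y_s$ of $X_s$ (which, as $\ord(X_s) = M - 1$, reads $\tower{Y_s}[M - 1] \equiv \tower{X_s}[M - 1] - 1 \pmod{p}$) yields $\tower{Y}[M] \equiv \tower{X}[M] - 1 \pmod{p}$, which is condition (1) for $Y$ with $\ccH = M$. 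For condition (2), the displayed identity shows that the termwise difference of $\tower{Y}[\ge M + 1]$ and $\tower{X}[\ge M + 1]$ coincides with that of $\tower{Y_s}[\ge M]$ and $\tower{X_s}[\ge M]$. Since the order $\prec$ on sequences with finitely many nonzero terms depends only on the sign of this difference at the largest index where it is nonzero, condition (2) for $Y_s$, namely $\tower{Y_s}[\ge M] \succeq \tower{X_s}[\ge M]$, transfers verbatim to $\tower{Y}[\ge M + 1] \succeq \tower{X}[\ge M + 1]$. Hence $Y$ is a $p^*$-option of $X$.

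The computations are routine; the two points needing a little care are the translation between the core-tower coordinates and the actual coin positions — checking that moving $x$ down by $p^{M-1}$ inside $X_s$ is the same operation on $X$ as moving $\tilde x = s + px$ down by $p^M$, and that the result is still a legal move — and the observation that $\succeq$ is decided entirely by the difference sequence, which is exactly what lets condition (2) for $X_s$ pass to condition (2) for $X$ with no further estimate.
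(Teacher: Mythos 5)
Your proposal is correct and takes essentially the same route as the paper: the paper's proof likewise computes $\tower{Y}[L] = \sum_{\ccr \in \ZZ_p} \tower{Y_\ccr}[L-1]$ for $L \ge M$ and transfers the two defining conditions from the $p^*$-option $Y_s$ of $X_s$ to $Y = [Y_\ccr]_{\ccr \in \ZZ_p}$. Your additional explicit check that $Y = X \cup \Set{\tilde x - p^M} \setminus \Set{\tilde x}$ with $\tilde x = s + px$ is a legal single-coin move is a point the paper leaves implicit, and it is verified correctly.
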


\begin{proof}
 \comment{Proof.}
\label{sec:orgheadline76}
Let \(Y = [Y_\ccr]_{\ccr \in \ZZ_p}\).
Since \(Y_s\) is a \(p^{(M - 1)}\)-option of \(X_s\),
we have 
\[
 \tower{Y}[M] = \sum_{r \in \ZZ_p} \tower{Y_r}[M - 1] \equiv \sum_{r \in \ZZ_p} \tower{X_r}[M - 1] - 1 \equiv \tower{X}[M] - 1\pmod{p}
\]
and
\[
 \tower{Y}[\ge M+1] = \sum_{r \in \ZZ_p} \tower{Y_r}[\ge M] \succeq \sum_{r \in \ZZ_p} \tower{X_r}[\ge M] = \tower{X}[\ge M + 1].
\]
 
\end{proof}

\comment{Connect}
\label{sec:orgheadline77}
We now proceed to prove Proposition \ref{orgtarget10}.
\begin{lem}
 \comment{Lem. sum-over-p}
\label{sec:orgheadline78}
\label{orgtarget46}
Let \(X\) be a position with order \(M\).
If  \(\pk(X) = -1\) and \(\tower{X}[M] \ge p + 1\),
then \(X\) has a \(p^*\)-descendant \(Y\) with the following three properties:
\begin{enumerate}
\item \(\tower{Y}[M] = p\).
\item \(\tower{Y}[\ge M + 1] = \tower{X}[\ge M + 1]\).
\item \(Y\) satisfies (P0).
\end{enumerate}
 
\end{lem}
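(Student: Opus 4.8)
The plan is to peel off two cheap reductions, bring the problem down to a single branch move, and then read off property~(3) from a counting inequality. Throughout, write $M=\ord(X)$.

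\emph{Reductions.} Since $\ord$, $\pk$, $\tower{\cdot}$ and the validity of (P0) depend only on $\lambda(X)$, I may replace $X$ by $\pshift{X}{n}$ for a suitable $n$ and assume $\size{X}$ is a multiple of $p^M$; this then persists under all moves. Next I reduce to $\tower{X}[M]=p+1$: by Lemma~\ref{orgtarget34}, $X$ has a $p^*$-option $X'$; by Lemma~\ref{orgtarget39}, $\pk(X')=-1$ and $\tower{X'}[\ge M+1]=\tower{X}[\ge M+1]$; since $X'$ is obtained from $X$ by a $p^M$-move, Lemma~\ref{orgtarget38} gives $\tower{X'}[<M]=\tower{X}[<M]=0$, and then $\size{\lambda(X')}=\size{\lambda(X)}-p^M$ together with (\ref{orgtarget19}) forces $\tower{X'}[M]=\tower{X}[M]-1$, so $\ord(X')=M$ as long as $\tower{X'}[M]>0$. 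Iterating this $\tower{X}[M]-p-1$ times yields a $p^*$-descendant of $X$ with the same peak digit, order, and high part $\tower{\cdot}[\ge M+1]$ but with $\tower{\cdot}[M]=p+1$; proving the lemma for it proves it for $X$, so I assume $\tower{X}[M]=p+1$. Finally, if $M=0$, then any $p^*$-option $Y$ of $X$ has $\tower{Y}[0]=p\ne0$, hence $\ord(Y)=0$, hence $Y$ satisfies (P0) automatically (every position of order $0$ does), while $\tower{Y}[\ge1]=\tower{X}[\ge1]$ by Lemma~\ref{orgtarget39}; this settles $M=0$, so from now on $M\ge1$.

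\emph{The branch move.} Write $X=[X_r]_{r\in\ZZ_p}$. From $\ord(X)=M$ and (\ref{orgtarget20}) one gets $\ord(X_r)\ge M-1$ for every $r$ and $\sum_{r\in\ZZ_p}\tower{X_r}[M-1]=\tower{X}[M]=p+1$. Since this sum exceeds the number $p$ of branches, lowering one well-chosen entry of the tuple $(\tower{X_r}[M-1])_{r\in\ZZ_p}$ by $1$ keeps it non-constant: if the tuple is ``one entry $2$, the rest $1$'' then lower an entry equal to $1$, otherwise lower any entry that is $\ge1$. Call the chosen index $s$; then $\tower{X_s}[M-1]\ge1$, so $\ord(X_s)=M-1<\infty$ and, by Lemma~\ref{orgtarget34}, $X_s$ has a $p^*$-option $Z_s$. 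By Lemma~\ref{orgtarget45}, $Y:=[Z_s,(X_r)_{r\ne s}]$ is a $p^*$-option of $X$. Exactly as in the Reductions (using $\pk(X)=-1$ and Lemma~\ref{orgtarget39}, the fact that $Y$ is a $p^M$-move away from $X$ and Lemma~\ref{orgtarget38}, $\size{\lambda(Y)}=\size{\lambda(X)}-p^M$, and (\ref{orgtarget19})) one gets $\tower{Y}[<M]=0$, $\tower{Y}[M]=p$, $\tower{Y}[\ge M+1]=\tower{X}[\ge M+1]$ and $\ord(Y)=M$; this is (1) and~(2). Applying (\ref{orgtarget20}) to $Y$ gives $p=\tower{Y}[M]=\tower{Z_s}[M-1]+\sum_{r\ne s}\tower{X_r}[M-1]$, hence $\tower{Z_s}[M-1]=\tower{X_s}[M-1]-1$; thus $(\tower{Y_r}[M-1])_{r\in\ZZ_p}$ is $(\tower{X_r}[M-1])_{r\in\ZZ_p}$ with the $s$-th entry lowered by one, which by the choice of $s$ is non-constant. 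Since $Y$ has order $M\ge1$, $\size{Y}\equiv0\pmod{p^M}$, and — as observed in the proof of~(A1) — a position of order $M\ge1$ with $\size{\cdot}\equiv0\pmod{p^M}$ that fails (P0) must have constant $(M-1)$-st branch digits, the position $Y$ satisfies (P0). This is~(3).

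\emph{Where the work is.} Properties (1) and (2) are nearly automatic once $\pk(X)=-1$: then every $p^*$-option lowers $\tower{\cdot}[M]$ by exactly one and fixes $\tower{\cdot}[\ge M+1]$, so one just walks down to the layer $\tower{\cdot}[M]=p$. The substance is property~(3); the decisive point is that after reducing to $\tower{X}[M]=p+1$ the $(M-1)$-st digits of the $p$ branches sum to $p+1$, one more than the number of branches, and this slack is exactly enough to steer the branch move supplied by Lemma~\ref{orgtarget45} so that those digits remain non-constant, which is precisely what rules out the failure of (P0). The only thing to be careful about is not performing the one move — lowering a ``$2$'' when the profile is $2,1,\dots,1$ — that would make the digits all equal.
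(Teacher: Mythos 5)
Your proof is correct and follows essentially the same route as the paper: handle $M=0$ separately via the fact that order-$0$ positions satisfy (P0), use Lemma \ref{orgtarget45} to realize branch moves as $p^*$-options of the whole position, use $\pk(X)=-1$ with Lemma \ref{orgtarget39} to pin down the digits, and force (P0) by making the branch digits $(\tower{X_r}[M-1])_r$ non-constant, invoking Lemma \ref{orgtarget38}. The only (cosmetic) difference is that you first normalize $\tower{X}[M]$ to $p+1$ by whole-position $p^*$-options and then perform a single well-chosen branch move, whereas the paper fixes a non-constant target tuple $(b^r)$ with $\sum_r b^r = p$, $b^r \le \tower{X_r}[M-1]$ and walks each branch down to it; the avoided ``all-ones'' profile is the same combinatorial point in both arguments.
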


\comment{Proof of lemma}
\label{sec:orgheadline79}
\noindent
\emph{\textbf{proof assuming Lemma \ref{orgtarget34}.}}
Suppose that \(M = 0\). 
Since \(\pk(X) = -1\), we see that \(X\) has a \(p^*\)-descendant \(Y\) with \(\tower{Y}[0] = p\) and \(\tower{Y}[\ge 1] = \tower{X}[\ge 1]\).
The position \(Y\) also satisfies (P0),
since otherwise  \(\tower{Y}[0] = 0\).

Suppose that \(M > 0\).
For each \(\ccr \in \ZZ_p\), let \(a^\ccr = \tower{X_\ccr}[\ccM - 1]\) .
We show that there exists \((b^0, \ldots, b^{p - 1}) \in \NN^p\) such that
\begin{enumerate}
\item \(\sum_{\ccr \in \ZZ_p} b^\ccr = p\),
\item \(b^\ccr \le a^\ccr\) for each \(\ccr \in \ZZ_p\),
\item \(b^\ccs \neq b^\cct\) for some \(\ccs, \cct \in \ZZ_p\).
\end{enumerate}
We may assume that \(\sum_{\ccr \in \ZZ_p} a^\ccr = p + 1\) and \(a^0 \ge \ldots \ge a^{p - 1}\).
Let
\[
 (b^0, \ldots, b^{p - 1}) = \begin{cases}
 (a^0 - 1, a^1, a^2, \ldots, a^{p - 1}) & \tif a^1 = 0, \\
 (a^0, a^1 - 1, a^2, \ldots, a^{p - 1}) & \tif a^1 \neq 0. \\
 \end{cases}
\]
Then \((b^0, \ldots, b^{p - 1})\) has the desired properties.

Since \(\pk(X) = -1\), it follows from Lemma \ref{orgtarget45}
that \(X_\ccr\) has a \(p^*\)-descendant \(Y_\ccr\) such that
\(\tower{Y_\ccr}[M - 1] = b^\ccr\) and \(\tower{Y_\ccr}[\ge M] = \tower{X_\ccr}[\ge M]\).
Let \(Y = [Y_\ccr]_{\ccr \in \ZZ_p}\).
Then \(Y\) is a \(p^*\)-descendant of \(X\) such that \(\tower{Y}[M] = p\) and \(\tower{Y}[\ge M + 1] = \tower{X}[\ge M + 1]\).
Moreover, since \(b^\ccs \neq b^\cct\) for some \(\ccs, \cct \in \ZZ_p\), Lemma \ref{orgtarget38} implies that \(Y\) also satisfies (P0).
\qed

\comment{Proof of Proposition \ref{orgtarget10}}
\label{sec:orgheadline80}
\vspace{1em}
\noindent
\emph{\textbf{proof of Proposition \ref{orgtarget10} assuming Lemmas \ref{orgtarget31}, \ref{orgtarget34}, \ref{orgtarget26}, and \ref{orgtarget41}.}}
Applying Theorem \ref{orgtarget6} to \(X_R\) for each \(R \in \ZZ_p^{N + 1}\),
we find that \(X\) has a descendant \(Y\) such that
\[
 \tower{Y}[\le N] = \tower{X}[\le N] \quad \tand \quad \tower{Y}[\ge N + 1] = \btower{X}[\ge N + 1].
\]
Let 
\[
 \ccg = (\underbrace{p - 1, \ldots, p - 1}_{N + 1}, \btower{X}[N + 1], \btower{X}[N + 2], \ldots)_p.
\]
Our goal is to find a descendant \(Z\) of \(Y\) with \(\msg(Z) \ge \ccg\).
Let \(K\) be the peak digit of \(Y\). We split into two cases.

\vspace{0.8em}
\noindent
\textbf{Case 1: \(K \ge N\).} 
The position \(Y\) has a descendant \(Z\) 
satisfying the conditions in Lemma \ref{orgtarget41}.
If \(\tower{Y}[K] < \tower{Z}[K] < p\), then \(\msg(Z) \ge \btower{Z} > \ccg\).
If \(\tower{Z}[K] = p\),
then \(\msg(Z) \ge \ccg\) by Lemma \ref{orgtarget43}.

\vspace{0.8em}
\noindent
\textbf{Case 2: \(K < N\).}
Let \(Y^0 = Y\). By repeatedly applying Lemma \ref{orgtarget34},
we obtain a \(p^*\)-path \((Y^0, \ldots, Y^\ccn)\) such that
\begin{description}
\item[{\textnormal{(R1)}}] \(\ord(Y^\ccn) \ge N\),
\item[{\textnormal{(R2)}}] \(\ord(Y^{\cch}) < N\) for \(0 \le \cch < \ccn\).
\end{description}
Corollary \ref{orgtarget40} yields \(\tower{Y^\ccn}[\ge N] = \tower{Y}[\ge N]\).
Since \(\pk(Y^\ccn) = -1\) and \(\tower{Y^\ccn}[N] = \tower{Y}[N] \ge p + 1\), it follows from Lemmas \ref{orgtarget46} and \ref{orgtarget43} that \(\msg(Y^\ccn) = \ccg\).

\qed

Let \(X^0\) be a position and \(N \in \NN\).
As we have seen in the proof of Proposition \ref{orgtarget10}, there exists a \(p^*\)-path \((X^0, \ldots, X^\ccn)\) satisfying (R1) and (R2).
We call \(X^n\) an \emph{\((N-1)\)-rounded descendant} of \(X^0\).

\subsection{Proof of Lemma \ref{orgtarget31}}
\label{sec:orgheadline83}
\label{orgtarget47}
\begin{proof}
 \comment{Proof.}
\label{sec:orgheadline82}
Let \(\ccn = \btower{X}\).
We divide into two cases.

\vspace{0.8em}
\noindent
\textbf{Case 1: \(\ccn \not \equiv \cch \pmod{p}\).}
Since \(\ord(\btower{X} - \btower{Y}) = \ord(1) = 0\), it follows from Lemma \ref{orgtarget22} that \(Y\) is an option of \(X\).
Suppose that \(\cch < \ccn - 1\).
By assumption, \(\sg(Y) = \btower{Y}\), and hence \(Y\) has an option \(Z\) with \(\sg(Z) = \btower{Z} = \cch\).
Since \(\ord(\ccn - \cch) = 0\), the position \(Z\) is also an option of \(X\) by Lemma \ref{orgtarget22}. 

\vspace{0.8em}
\noindent
\textbf{Case 2: \(\ccn  \equiv \cch \pmod{p}\).}
We first construct a descendant \(Z\) of \(X\) with \(\btower{Z} = \cch\).
Let \(N = \ord(\ccn - \cch)\).
Since \(\ccn \equiv \cch\pmod{p}\) and \(\ccn > \cch\), it follows that \(N > 0\) and 
\(\pexp{\ccn}[\ge 1] > \pexp{\cch}[\ge 1]\).
Let \(a^\ccr = \btower{X_\ccr}\) for each \(\ccr \in \ZZ_p\).
Then \(a^0 \oplus \cdots \oplus a^{p - 1} = \pexp{\ccn}[\ge 1] > \pexp{\cch}[\ge 1]\).
By Lemma \ref{orgtarget21}, there exists \((b^0, \ldots, b^{p - 1}) \in \NN^p\)
such that
\begin{enumerate}
\item \(\bigoplus_{\ccr \in \ZZ_p} b^\ccr = \pexp{\cch}[\ge 1]\),
\item \(b^\ccr \le a^\ccr\) for each \(\ccr \in \ZZ_p\),
\item \(N - 1 = \ord(\bigoplus_{\ccr \in \ZZ_p} a^\ccr - \bigoplus_{\ccr \in \ZZ_p} b^\ccr) = \min \Set{\ord(a^\ccr - b^\ccr) : \ccr \in \ZZ_p}\).
\end{enumerate}
Since \(\ssize{\lambda(X_\ccr)} < \ssize{\lambda(X)}\), we have \(\sg(X_\ccr) = \btower{X_\ccr} = a^\ccr \ge b^\ccr\). 
If \(b^\ccr < a^\ccr\), then let \(Z_\ccr\) be an option of \(X_\ccr\) with \(\sg(Z_\ccr) = \btower{Z_\ccr} = b^\ccr\). 
If \(b^\ccr = a^\ccr\), then let \(Z_\ccr = X_\ccr\).
Let \(Z = [Z_\ccr]_{\ccr \in \ZZ_p}\). Then \(\btower{Z} = \cch\).

We next show that \(Z\) is an option of \(X\).
To this end, we use the tuple representation.
Let \(X = (x^1, \ldots, x^\ccm), X_\ccr = (x^{\ccr, 1}, \ldots x^{\ccr, \ccm^\ccr}), Z = (z^1, \ldots, z^\ccm)\), and \(Z_\ccr = (z^{\ccr, 1}, \ldots, z^{\ccr, \ccm^\ccr})\) 
for each \(\ccr \in \ZZ_p\).
Since \(Z_\ccr\) is an option of \(X_\ccr\) when \(b^\ccr < a^\ccr\), it follows from Lemma \ref{orgtarget22} that
\[
 N - 1 = \min \Set{\ord(a^r - b^r) : \ccr \in \ZZ_p} = \min \Set{\ord(x^{\ccr, i} - z^{\ccr, i}) : 1 \le i \le \ccm^\ccr,\; \ccr \in \ZZ_p},
\]
and so \(\min \set{\ord(x^i - z^i) : 1 \le i \le \ccm} = N = \ord(n - h)\).
Therefore \(Z\) is an option of \(X\) by Lemma \ref{orgtarget22}.
 
\end{proof}
\section{\(p^H\)-Options}
\label{sec:orgheadline100}
\label{orgtarget35}
In this section, we show Lemmas \ref{orgtarget34}
and \ref{orgtarget26}.
\subsection{Proof of Lemma \ref{orgtarget34}.}
\label{sec:orgheadline91}
We first give a sufficient condition for an option to be a \(p^*\)-option.
\begin{lem}
 \comment{Lem. hook}
\label{sec:orgheadline85}
\label{orgtarget48}
Let \(X\) be a position with order \(M\) and \(Y\) its option \(X \cup \Set{x - p^\ccH} \setminus \Set{x}\).
Then
\begin{equation}
\label{orgtarget49}
 \hook{Y}[L] - \hook{X}[L] = \begin{cases}
   - p^{\ccH - L} & \tif L \le \ccH, \\
 \ssize{X_{\pexp{x}[<L]}} - \ssize{X_{\pexp{(x - p^\ccH)}[<L]}} - \delta_{\pexp{x}[\ccH]} \delta_{\pexp{x}[\ccH + 1]}\cdots \delta_{\pexp{x}[L - 1]} - 1 & \tif L \ge \ccH + 1.\\
 \end{cases}
\end{equation}
In particular, if \(\ccH = \ccM\) and
\begin{equation}
\label{orgtarget50}
 \msize{X_{\pexp{(x - p^\ccM)}[<L]}} + \delta_{\pexp{x}[\ccM]} \delta_{\pexp{x}[\ccM + 1]} \cdots \delta_{\pexp{x}[L - 1]} <  \msize{X_{\pexp{x}[<L]}} \quad \tforevery L \ge \ccM + 1, 
\end{equation}
then \(Y\) is a \(p^*\)-option of \(X\).
 
\end{lem}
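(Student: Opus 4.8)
The plan is to compute $\hook{X}[L]$ and $\hook{Y}[L]$ directly from the formula (\ref{orgtarget17}), which expresses $\hook{X}[L]$ as $\sum_{x \in X} \pexp{x}[\ge L] - \sum_{R \in \ZZ_p^L} \binom{\size{X_R}}{2}$. Since $Y = X \cup \{x - p^\ccH\} \setminus \{x\}$, only the coin at $x$ is affected. I would split into the two regimes $L \le \ccH$ and $L \ge \ccH + 1$, since the effect on the $L$-truncation $\pexp{\cdot}[<L]$ and the $L$-quotient $\pexp{\cdot}[\ge L]$ of the moved coin differs qualitatively.

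For $L \le \ccH$: here $p^L \mid p^\ccH$, so $\pexp{(x - p^\ccH)}[<L] = \pexp{x}[<L]$ and hence the multiset $(\size{X_R})_{R \in \ZZ_p^L}$ is unchanged; only the first sum $\sum \pexp{\cdot}[\ge L]$ changes, decreasing by $\pexp{x}[\ge L] - \pexp{(x-p^\ccH)}[\ge L] = (x - (x - p^\ccH))/p^L = p^{\ccH - L}$. This gives the first case. For $L \ge \ccH + 1$: the quotient part decreases by $(x - (x-p^\ccH))/p^L$, but this is not an integer in general, so I instead track both sums together. The coin $x$ leaves the block indexed by $R = \pexp{x}[<L]$ and the coin $x - p^\ccH$ enters the block indexed by $R' = \pexp{(x - p^\ccH)}[<L]$. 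Using $\binom{n}{2} - \binom{n-1}{2} = n - 1$, the change in $-\sum \binom{\size{X_R}}{2}$ is $-(\size{X_{R'}}) + (\size{X_R} - 1) = \size{X_R} - \size{X_{R'}} - 1$ — wait, one must be careful about whether $R = R'$, but that case forces $\ccH \ge L$, excluded here, so $R \ne R'$ and the two blocks are distinct. The change in $\sum \pexp{\cdot}[\ge L]$ contributes the $-\delta_{\pexp{x}[\ccH]} \cdots \delta_{\pexp{x}[L-1]}$ term: the $L$-quotient of $x$ minus that of $x - p^\ccH$ equals $1$ unless subtracting $p^\ccH$ causes borrows propagating past digit $L-1$, i.e. unless digits $\ccH, \ccH+1, \ldots, L-1$ of $x$ are all $0$; I would verify this borrow analysis carefully, as it is the one genuinely fiddly point. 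Combining the two contributions yields the second case of (\ref{orgtarget49}).

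For the "in particular" clause, set $\ccH = \ccM = \ord(X)$. I would use the identity (\ref{orgtarget18}), $\tower{X}[L] = \hook{X}[L] - p\hook{X}[L+1]$, together with the $\ccH = \ccM$ specialization of (\ref{orgtarget49}) to check the two defining conditions of a $p^*$-option (Definition, $p^\ccH$-options). Condition (1), that $\tower{Y}[L] \equiv \tower{X}[L] - 1 \pmod p$ for $\ccM \le L \le M$, reduces here (since $\ccH = \ccM = M$) to the single index $L = M$, where (\ref{orgtarget49}) gives $\hook{Y}[M] - \hook{X}[M] = -1$ (the $L = \ccH$ case with $\ccH - L = 0$) and $\hook{Y}[M+1] - \hook{X}[M+1] \equiv 0$, which I would deduce from hypothesis (\ref{orgtarget50}) modulo $p$ — actually I need the full chain $L \ge \ccM+1$, so I would show (\ref{orgtarget50}) forces $\hook{Y}[L] \ge \hook{X}[L]$ wait, rather forces $\tower{Y}[\ge M+1] \succeq \tower{X}[\ge M+1]$, condition (2). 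The hypothesis (\ref{orgtarget50}) says $\size{X_{\pexp{(x-p^\ccM)}[<L]}} + \delta_{\pexp{x}[\ccM]}\cdots\delta_{\pexp{x}[L-1]} < \size{X_{\pexp{x}[<L]}}$ for every $L \ge \ccM+1$; plugging into (\ref{orgtarget49}) this makes $\hook{Y}[L] - \hook{X}[L] \ge 0$ for all $L \ge \ccM+1$, and I would translate this via (\ref{orgtarget18}) into $\tower{Y}[\ge M+1] \succeq \tower{X}[\ge M+1]$ by comparing the highest differing digit. The main obstacle is the bookkeeping of the borrow-propagation term and getting the inequalities in (\ref{orgtarget50}) to line up exactly with the $\succeq$ comparison; the rest is routine substitution into (\ref{orgtarget17}) and (\ref{orgtarget18}).
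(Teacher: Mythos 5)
Your proposal is correct and is essentially the paper's own argument: the paper organizes the identical computation through the intermediate position $Z = X \setminus \set{x}$ and the per-block identity $\hook{X}[L] - \hook{Z}[L] = \pexp{x}[\ge L] - (\ssize{X_{\pexp{x}[<L]}} - 1)$, whereas you expand (\ref{orgtarget17}) directly and track the binomial and quotient sums, but the bookkeeping is the same, and your passage from $\hook{Y}[L] \ge \hook{X}[L]$ for all $L \ge \ccM + 1$ to $\tower{Y}[\ge \ccM+1] \succeq \tower{X}[\ge \ccM+1]$ via (\ref{orgtarget18}) and the highest differing digit is exactly the paper's concluding step. One sentence of yours is inverted: $\pexp{x}[\ge L] - \pexp{(x - p^\ccH)}[\ge L]$ equals $1$ precisely when $\pexp{x}[\ccH] = \cdots = \pexp{x}[L-1] = 0$, i.e.\ when the borrow does propagate to digit $L$, and equals $0$ otherwise; this agrees with the term $-\delta_{\pexp{x}[\ccH]} \cdots \delta_{\pexp{x}[L-1]}$ you write down, so the slip is purely verbal and does not affect the argument. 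Also, for condition (1) of the $p^*$-option you do not need $\hook{Y}[\ccM+1] - \hook{X}[\ccM+1] \equiv 0 \pmod{p}$: in $\tower{Y}[\ccM] - \tower{X}[\ccM] = (\hook{Y}[\ccM] - \hook{X}[\ccM]) - p(\hook{Y}[\ccM+1] - \hook{X}[\ccM+1])$ the second term is killed by the factor $p$, so the drop by $1$ at digit $\ccM$ suffices, as you correctly note after your self-correction.
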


\begin{proof}
 \comment{Proof.}
\label{sec:orgheadline86}
Let \(Z = X \setminus \Set{x}\).
By (\ref{orgtarget17}), we have
\[
 \hook{X}[L] - \hook{Z}[L] = \sum_{R \in \ZZ_p^L} \hook{X_R}[0] - \hook{Z_R}[0] = \hook{X_{\pexp{x}[<L]}}[0] - \hook{Z_{\pexp{x}[<L]}}[0]
 = \pexp{x}[\ge L] - (\size{X_{\pexp{x}[<L]}} - 1).
\]
Since \(Z  = Y \setminus \Set{x - p^\ccH}\), we also have 
\(\hook{Y}[L] - \hook{Z}[L] = \pexp{(x - p^\ccH)}[\ge L] - (\ssize{Y_{\pexp{(x - p^\ccH)}[<L]}} - 1)\).
Thus
\[
 \hook{Y}[L] - \hook{X}[L] = \ssize{X_{\pexp{x}[<L]}} - \ssize{Y_{\pexp{(x - p^\ccH)}[<L]}} + \pexp{(x - p^\ccH)}[\ge L] - \pexp{x}[\ge L].
\]
If \(L \le \ccH\), then \(\pexp{x}[<L] = \pexp{(x - p^\ccH)}[<L]\), and so
\(\hook{Y}[L] - \hook{X}[L] = -p^{\ccH - L}\).
Suppose that \(L \ge \ccH + 1\).
Then \(\ssize{Y_{\pexp{(x - p^\ccH)}[<L]}} = \ssize{X_{\pexp{(x - p^\ccH)}[<L]}} + 1\)
and \(\pexp{(x - p^\ccH)}[\ge L] - \pexp{x}[\ge L] = - \delta_{\pexp{x}[\ccH]} \delta_{\pexp{x}[\ccH + 1]}\cdots \delta_{\pexp{x}[L - 1]}\),
which gives (\ref{orgtarget49}).

Suppose that \(\ccH = \ccM\) and \(X\) satisfies (\ref{orgtarget50}).
Then \(\hook{Y}[L] - \hook{X}[L] \ge 0\) for every \(L \ge \ccM + 1\).
This shows that \(\hook{Y}[\ge \ccM + 1] \succeq \hook{X}[\ge \ccM + 1]\), and so \(\tower{Y}[\ge \ccM + 1] \succeq \tower{X}[\ge \ccM + 1]\).
Since \(\tower{Y}[\ccM] \equiv \tower{X}[\ccM] - 1 \pmod{p}\),
the position \(Y\) is a \(p^*\)-option of \(X\).
 
\end{proof}

\comment{Connect}
\label{sec:orgheadline87}
The following result provides a sufficient condition for a position to have a \(p^*\)-option.
\begin{lem}
 \comment{Lem. \(p^*\) step2}
\label{sec:orgheadline88}
\label{orgtarget51}
Let \(X\) be a position with order \(\ccM\).
Let \(\ccH\) and \(\ccN\) be non-negative integers with \(\ccH \le \ccM \le \ccN - 1\).
Suppose that there exists \(\ccS \in \ZZ_p^{\ccN}\) such that
\begin{equation}
\label{orgtarget52}
 \msize{X_{\ccS - p^{\ccH}}} + \delta_{\pexp{\ccS}[\ccH]}\delta_{\pexp{\ccS}[\ccH + 1]} \cdots \delta_{\pexp{\ccS}[\ccN - 1]} < \msize{X_{\ccS}}.
\end{equation}
Then \(X\) has an option \(Y\) such that \(\tower{Y}[\ge N] \succeq \tower{X}[\ge N]\) and \(Y = X \cup \set{x - p^{\ccH}} \setminus \set{x}\) for some \(x \in X\) with \(\pexp{x}[<\ccN] = \ccS\).
In particular, if \(\ccH = \ccM = \ccN - 1\), then \(Y\) is a \(p^*\)-option of \(X\).
 
\end{lem}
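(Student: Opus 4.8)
The plan is to pick the moved coin $x$ by a greedy descent through the $p$-ary tree of residues and then read off $\tower{Y}[\ge N] \succeq \tower{X}[\ge N]$ from the hook-length formula (\ref{orgtarget49}) of Lemma \ref{orgtarget48}. The combinatorial heart is the following claim: \emph{if $U$ and $W$ are finite subsets of $\NN$ with $\size{W} < \size{U}$, then there is an $a \in U$ such that $\size{\Set{a' \in U : a' \equiv a \pmod{p^e}}} > \size{\Set{w \in W : w \equiv a \pmod{p^e}}}$ for every $e \in \NN$.} I would prove this by building residues $R_0 = 0, R_1, R_2, \dots$ with $R_e$ the reduction of $R_{e+1}$ modulo $p^e$ so that $u_U(R_e) > u_W(R_e)$, where $u_U(R)$ and $u_W(R)$ count the elements of $U$, resp.\ $W$, lying in the class $R$: this holds at $R_0$ since $u_U(R_0) = \size{U} > \size{W} = u_W(R_0)$, and if it holds at $R_e$ then $\sum_{d \in \ZZ_p} u_U(R_e + d p^e) = u_U(R_e) > u_W(R_e) = \sum_{d \in \ZZ_p} u_W(R_e + d p^e)$ forces some child $R_{e+1} = R_e + d p^e$ to inherit the strict inequality; the classes $\Set{a' \in U : a' \equiv R_e \pmod{p^e}}$ are nonempty and nested, so their intersection contains the required $a$, and for $e$ large this class is just $\Set{a}$, so $a \notin W$.

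With $S$ as in the hypothesis, put $\delta = \delta_{\pexp{S}[H]} \cdots \delta_{\pexp{S}[N-1]}$, so $\delta = 1$ exactly when $S < p^H$, and write $T = X_S$ and $B = X_{S - p^H}$ (well-defined, since $\pexp{(x - p^H)}[<N]$ depends only on $S$); then (\ref{orgtarget52}) reads $\size{B} + \delta < \size{T}$. If $\delta = 0$, I apply the claim with $W = B$ to get $a \in T$ with $u_T(a \bmod p^e) > u_B(a \bmod p^e)$ for all $e$. If $\delta = 1$, I apply it with $W = \Set{b + 1 : b \in B} \cup \Set{0}$, which has $\size{W} = \size{B} + 1 < \size{T}$, to get $a \in T$ with $u_T(a \bmod p^e) > u_W(a \bmod p^e)$ for all $e$; here $a \notin W$ forces $a \ge 1$ and $a - 1 \notin B$. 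In both cases let $x$ be the coin of $X$ with $\pexp{x}[<N] = S$ and $\pexp{x}[\ge N] = a$, and set $Y = X \cup \Set{x - p^H} \setminus \Set{x}$. A brief borrow analysis shows $x - p^H \ge 0$, and that $\pexp{(x-p^H)}[\ge N]$ equals $a$ if $\delta = 0$ and $a - 1$ if $\delta = 1$; in either case $a \notin B$ (resp.\ $a - 1 \notin B$) gives $x - p^H \notin X$, so $Y$ is an option of $X$, any single-coin downward move to an empty square being legal.

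It remains to check $\hook{Y}[L] \ge \hook{X}[L]$ for all $L \ge N$. For $L = N$, formula (\ref{orgtarget49}) turns this into precisely (\ref{orgtarget52}). For $L = N + e$ with $e \ge 1$, (\ref{orgtarget49}) gives $\hook{Y}[L] - \hook{X}[L] = \size{X_{\pexp{x}[<L]}} - \size{X_{\pexp{(x-p^H)}[<L]}} - \delta_{\pexp{x}[H]} \cdots \delta_{\pexp{x}[L-1]} - 1$, and unwinding the $p$-core tower identifies $\size{X_{\pexp{x}[<L]}}$ with $u_T(a \bmod p^e)$, while $\size{X_{\pexp{(x-p^H)}[<L]}}$ together with the $\delta$-product equals $u_B(a \bmod p^e)$ when $\delta = 0$ (the $\delta$-product vanishing because $S$ has a nonzero digit below $p^N$) and $u_W(a \bmod p^e)$ when $\delta = 1$ (the $\delta$-product being $\delta_{\pexp{a}[0]} \cdots \delta_{\pexp{a}[e-1]}$, which is exactly the contribution of the adjoined $0$); the claim then forces the difference to be $\ge 0$. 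Since $\tower{Z}[L] = \hook{Z}[L] - p \hook{Z}[L+1]$ by (\ref{orgtarget18}), the highest level at which $\hook{Y}$ exceeds $\hook{X}$, if any, is the highest level at which $\tower{Y}$ exceeds $\tower{X}$, so $\tower{Y}[\ge N] \succeq \tower{X}[\ge N]$. Finally, when $H = M = N - 1$, formula (\ref{orgtarget49}) at $L = M$ gives $\hook{Y}[M] - \hook{X}[M] = -1$, hence $\tower{Y}[M] \equiv \tower{X}[M] - 1 \pmod p$; with $\tower{Y}[\ge M+1] \succeq \tower{X}[\ge M+1]$ this is exactly the assertion that $Y$ is a $p^*$-option of $X$.

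The step I expect to be the main obstacle is the translation between the two coins' residue classes and the quantities in (\ref{orgtarget49}): in particular the borrow analysis isolating the case $S < p^H$, and there the device of replacing $B$ by $\Set{b + 1 : b \in B} \cup \Set{0}$, which absorbs the leftover $\delta$-product term into the clean tree-counting inequality supplied by the claim.
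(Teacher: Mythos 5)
Your proof is correct and takes essentially the same route as the paper: the paper also constructs the higher digits of $x$ by exactly this greedy descent through residue classes, maintaining the strict counting inequality (\ref{orgtarget52}) at every level $L \ge N$ (carrying the $\delta$-product along inside the induction rather than absorbing it into the shifted comparison set), and then concludes via (\ref{orgtarget49}) and (\ref{orgtarget18}). The repackaging into an abstract two-set claim plus the $(B+1)\cup\{0\}$ trick is only cosmetic.
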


\begin{proof}
 \comment{Proof.}
\label{sec:orgheadline89}
We first construct \(x\) satisfying
\(\ssize{X_{\pexp{(x - p^H)}[<L]}} + \delta_{\pexp{x}[\ccH]}\delta_{\pexp{x}[\ccH + 1]} \cdots \delta_{\pexp{x}[L - 1]} < \ssize{X_{\pexp{x}[<L]}}\) for every \(L \ge N\).
Let \(\delta = \delta_{\pexp{\ccS}[\ccH]} \delta_{\pexp{\ccS}[\ccH + 1]} \cdots \delta_{\pexp{\ccS}[\ccN - 1]}\).
Recall that \((\ccS, \ccr) = (\ccS_0, \ldots, \ccS_{\ccN - 1}, \ccr) \in \ZZ_p^{\ccN + 1}\) for each \(\ccr \in \ZZ_p\). 
Since \(\ssize{X_{\ccS}} = \sum_{\ccr \in \ZZ_p} \ssize{X_{(\ccS, \ccr)}}\) and
\(\sum_{\ccr \in \ZZ_p} \ssize{X_{(\ccS, \ccr) - p^{\ccH}}}  = \sum_{\ccr \in \ZZ_p} \ssize{X_{(\ccS - p^{\ccH}, \ccr)}}\), we have
\[
 \sum_{\ccr \in \ZZ_p} \msize{X_{(\ccS, \ccr) - p^{\ccH}}} + \delta = \msize{X_{\ccS - p^{\ccH}}} + \delta
 < \msize{X_\ccS} = \sum_{\ccr \in \ZZ_p} \msize{X_{(\ccS, \ccr)}}.
\]
This implies that
\[
 \msize{X_{(\ccS, \pexp{\ccS}[\ccN]) - p^{\ccH}}} + \delta \delta_{\pexp{\ccS}[\ccN]} < \msize{X_{(\ccS, \pexp{\ccS}[\ccN])}}
\]
for some \(\pexp{\ccS}[\ccN] \in \ZZ_p\).
Continuing this process, we obtain \(\ccS_{\ccN}, \ccS_{\ccN + 1}, \ldots \in \ZZ_p\).
Let \(x = (\ccS_0, \ccS_1, \ldots)\).
Then \(\pexp{x}[<\ccN] = \ccS\) and \(\ssize{X_{\pexp{(x - p^H)}[<L]}} + \delta_{\pexp{x}[\ccH]}\delta_{\pexp{x}[\ccH + 1]} \cdots \delta_{\pexp{x}[L - 1]} < \ssize{X_{\pexp{x}[<L]}}\) for every \(L \ge N\).

Let \(Y = X \cup \set{x - p^H} \setminus \set{x}\).
We next show that \(Y\) is an option of \(X\),
that is, \(x \in X,\; x - p^{\ccH} \not \in X\), and \(x - p^{\ccH} \ge 0\).
Let \(L = \min \set{L \in \NN : \pexp{x'}[\ge L] = 0 \; \tforevery \; x' \in X}\).
Then \(\ssize{X_\ccR} \in \set{0, 1}\) for each \(\ccR \in \ZZ_p^{L}\).
Since \(\ssize{X_{\pexp{(x - p^{\ccH})}[<L]}} + \delta_{\pexp{x}[\ccH]} \delta_{\pexp{x}[\ccH + 1]}\cdots \delta_{\pexp{x}[L - 1]}  < \ssize{X_{\pexp{x}[<L]}}\),
we have \(\ssize{X_{\pexp{(x - p^{\ccH})}[<L]}} + \delta_{\pexp{x}[\ccH]} \delta_{\pexp{x}[\ccH + 1]}\cdots \delta_{\pexp{x}[L - 1]} = 0\) and \(\ssize{X_{\pexp{x}[<L]}} = 1\).
This implies that \(Y\) is an option of \(X\), and therefore \(\tower{Y}[\ge N] \succeq \tower{X}[\ge N]\) by Lemma \ref{orgtarget48}.
 
\end{proof}

\comment{Proof of \(p^*\)-option}
\label{sec:orgheadline90}
\vspace{1em}

\noindent
\emph{\textbf{proof of Lemma \ref{orgtarget34}.}}
Let \(X\) be a non-terminal position with order \(M\).
By Lemma \ref{orgtarget45}, we may assume that \(M = 0\).

We show that
\(\ssize{X_{\ccs - 1}} + \delta_{\ccs} < \ssize{X_{\ccs}}\) for some \(\ccs \in \ZZ_p\).
We may assume that \(\ssize{X_{0}} \ge \ssize{X_1} \ge \cdots \ge \ssize{X_{p - 1}}\).
If \(\ssize{X_0} \in \set{\ssize{X_{p-1}}, \ssize{X_{p - 1}} + 1}\),
then \(\tower{X}[0] = 0\).
This shows that \(\ssize{X_{p-1}} + 1 < \ssize{X_0}\).
Hence \(X\) has a \(p^0\)-option by Lemma \ref{orgtarget51}.

\qed

\subsection{Proof of Lemma \ref{orgtarget26}}
\label{sec:orgheadline93}
\begin{proof}
 \comment{Proof.}
\label{sec:orgheadline92}
If \(M = 0\), then Lemma \ref{orgtarget34} shows that \(X\) has a \(p^0\)-option.
Suppose that \(M > 0\).
We may assume that \(\size{X} \equiv 0 \pmod{p^M}\).

\vspace{0.8em}
\noindent
\textbf{Claim.} There exists \(\ccT \in \ZZ_p^{M+1}\) such that
\(\pexp{\ccT}[0] \neq 0\) and \(\ssize{X_{\ccT \ominus 1}}  <\ssize{X_{\ccT}}\).

\vspace{0.8em}
Assuming this claim for the moment, we complete the proof.

Since \(\pexp{\ccT}[0] \neq 0\), we have \(\ssize{X_{\ccT \ominus 1}} = \ssize{X_{\ccT - 1}} + \delta_{\ccT_0} \cdots \delta_{\ccT_M}\).
By Lemma \ref{orgtarget51}, \(X\) has an option \(Y\)
such that \(\tower{Y}[\ge M + 1] \succeq \tower{X}[\ge M + 1]\) and \(Y = X \cup \Set{x - 1} \setminus \Set{x}\)
for some \(x \in X\) with \(\pexp{x}[< M + 1] = \ccT\).

We show that \(Y\) is a \(p^0\)-option of \(X\).
Since \(\ord(X) = M\) and \(\ssize{X} \equiv 0 \pmod{p^M}\),
it follows that \(\ssize{X_R} = \ssize{X}/p^L\) when \(0 \le L \le M\) and \(R \in \ZZ_p^L\).
By Lemma \ref{orgtarget48}, \(\tower{Y}[0] - \tower{X}[0] \equiv -1 \pmod{p}\) and
\begin{align*}
 \tower{Y}[L] - \tower{X}[L] &\equiv \ssize{X_{\pexp{x}[<L]}} - \ssize{X_{\pexp{(x - 1)}[<L]}} - \delta_{\pexp{x}[0]} \cdots \delta_{\pexp{x}[L - 1]} - 1 \\
 &\equiv \ssize{X}/p^L - \ssize{X}/p^L - 1 \equiv - 1 \pmod{p} \ \tfor 0 < L \le M.
\end{align*}
Thus \(Y\) is a \(p^0\)-option of \(X\).

It remains to prove the claim.
Since \(X\) satisfies (P0), we find that \(\ssize{X_{\ccS \ominus 1}} \neq \ssize{X_\ccS}\) for some \(\ccS \in \ZZ_p^{M + 1}\).
Hence there exists \(s \in \ZZ_p\) such that
\(\ssize{X_{\ccS \ominus s \ominus 1}} < \ssize{X_{\ccS \ominus s}}\), since otherwise
\(|X_{\ccS \ominus 1}| > |X_\ccS| \ge |X_{\ccS \oplus 1}| \ge \cdots \ge |X_{\ccS \oplus (p - 1)}| = |X_{\ccS \ominus 1}|\).

Let \(U = \ccS \ominus s\) and \(\widehat{\ccU} = (\ccU_1, \ldots, \ccU_{M - 1})\).
We may assume that \(\pexp{U}[0] = 0\). 
We show that there exist \(\pexp{\ccT}[0], \pexp{\ccT}[M] \in \ZZ_p\) such that \(\pexp{\ccT}[0] \neq 0\) and
\begin{equation}
\label{orgtarget53}
 \msize{X_{(\pexp{\ccT}[0], \widehat{\ccU}, \pexp{\ccT}[M]) \ominus 1}} < \msize{X_{(\pexp{\ccT}[0], \widehat{\ccU}, \pexp{\ccT}[M])}},
\end{equation}
where \((\pexp{\ccT}[0], \widehat{\ccU}, \pexp{\ccT}[M]) = (\pexp{\ccT}[0], \pexp{\ccU}[1], \ldots, \pexp{\ccU}[M - 1], \pexp{\ccT}[M])\).
Since \(M > 0\), we have
\[
 \sum_{\ccr \in \ZZ_p} \msize{X_{(\ccU_0 \ominus 1, \widehat{\ccU}, \ccr)}} = \msize{X_{(\ccU_0 \ominus 1, \widehat{\ccU})}} = \msize{X}/p^M = \msize{X_{(\ccU_0, \widehat{\ccU})}} = \sum_{\ccr \in \ZZ_p} \msize{X_{(\ccU_0, \widehat{\ccU}, \ccr)}}.
\]
Since \(\ssize{X_{(\pexp{\ccU}[0] \ominus 1, \widehat{\ccU}, \pexp{\ccU}[M])}} = \ssize{X_{\ccU \ominus 1}} < \size{X_\ccU} = \ssize{X_{(\pexp{\ccU}[0], \widehat{\ccU}, \pexp{\ccU}[M])}}\),
we find that \(\ssize{X_{(\pexp{\ccU}[0] \ominus 1, \widehat{\ccU}, \pexp{\ccT}[M])}} > \ssize{X_{(\pexp{\ccU}[0], \widehat{\ccU}, \pexp{\ccT}[M])}}\) for some \(\pexp{\ccT}[M] \in \ZZ_p\).
As we have seen above, there exists \(\pexp{\ccT}[0] \in \ZZ_p\) such that \(\pexp{\ccT}[0] \neq  \pexp{\ccU}[0] = 0\) and (\ref{orgtarget53}) holds.
This completes the proof.
 
\end{proof}

\subsection{\(p^*\)-Paths}
\label{sec:orgheadline99}
\comment{Intro}
\label{sec:orgheadline94}
It remains to prove Lemma \ref{orgtarget41}.
To this end, we introduce congruent \(p^*\)-paths.

\comment{Definition of congruent}
\label{sec:orgheadline95}
Let \((X^0, \ldots, X^\ccn)\) and \((\alt{X}^0, \ldots, \alt{X}^{\ccn})\) be two \(p^*\)-paths. 
Let \(N\) be a non-negative integer.
These two paths said to be \emph{congruent modulo} \(p^N\)
if \(X^i \equiv \alt{X}^{i} \pmod{p^N}\) for \(0 \le i \le \ccn\).

\comment{A sufficient for congruent}
\label{sec:orgheadline96}
We give a sufficient condition for these paths to be congruent modulo \(p^N\).
Let \(X^{i + 1} = X^i \cup \set{x^i - p^{M^i}} \setminus \set{x^i}\) and \(\alt{X}^{i + 1} = \alt{X}^{i} \cup \set{\salt{x}^{i} - p^{\alt{M}^i}} \setminus \set{\salt{x}^{i}}\)
for \(0 \le i \le \ccn - 1\).
Let \(S^i = \pexp{x^i}[<N]\) and \(\alt{S}^{i}= \pexp{\salt{x}^{i}}[<N]\).
If \(S^i = S^i - p^{M^i}\), then \(\ssize{X_R^{i + 1}} = \ssize{X_R^i}\) for each \(R \in \ZZ_p^N\).
If \(S^i \neq S^i - p^{M^i}\), then
\begin{equation}
\label{orgtarget54}
 \msize{X_R^{i + 1}} = \msize{X_R^{i}} +
 \begin{cases}
 -1 & \tif R = S^i, \\
 1 & \tif R = S^i - p^{M^i}, \\
 0 & \tif R \in \ZZ_p^N \setminus \Set{S^i, S^i - p^{M^i}}.
 \end{cases}
\end{equation}
Thus those paths are congruent modulo \(p^N\) if the following two conditions hold:
\begin{enumerate}
\item \(X^0 \equiv \alt{X}^{0} \pmod{p^N}\),
\item \(S^{i} = \alt{S}^{i}\) and \(S^{i} - p^{M^i} = \alt{S}^{i} - p^{\alt{M}^{i}}\) for \(0 \le i \le \ccn - 1\).
\end{enumerate}

The next lemma gives a sufficient condition for the existence of a congruent \(p^*\)-path.
\begin{lem}
 \comment{Lem. congruent p* path}
\label{sec:orgheadline97}
\label{orgtarget55}
Let \((X^0, \ldots, X^\ccn)\) be a \(p^*\)-path
and \(\alt{X}^{0}\) a position with \(\alt{X}^{0} \equiv X^0 \pmod {p^N}\) for some \(N \in \NN\).
Suppose that \(\tower{X^i}[\ge N] = \tower{X^0}[\ge N]\) for \(0 \le i \le \ccn\).
Then there exists a \(p^*\)-path \((\alt{X}^{0}, \ldots, \alt{X}^{\ccn})\)
congruent to \((X^0, \ldots, X^{\ccn})\) modulo \(p^N\)
such that \(\ord(\alt{X}^{i}) = \ord(X^i)\) for \(0 \le i \le \ccn - 1\).
 
\end{lem}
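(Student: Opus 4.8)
The plan is to build the path \((\alt{X}^0, \ldots, \alt{X}^\ccn)\) by induction on \(i\), starting from the given \(\alt{X}^0\) and maintaining the invariant \(\alt{X}^i \equiv X^i \pmod{p^N}\). Granting this invariant, the congruence of the two paths modulo \(p^N\) comes for free from the discussion preceding the lemma (in particular (\ref{orgtarget54})), provided the move producing \(\alt{X}^{i+1}\) deletes an element with the same residue modulo \(p^N\) as the move \(X^i \to X^{i+1}\) deletes. Write \(M^i = \ord(X^i)\), so that \(X^{i+1} = X^i \cup \Set{x^i - p^{M^i}} \setminus \Set{x^i}\) for some \(x^i \in X^i\), and set \(e^i_L = \hook{X^{i+1}}[L] - \hook{X^i}[L]\). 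Note also that once \(\alt{X}^i \equiv X^i \pmod{p^N}\), Lemma \ref{orgtarget38} gives \(\tower{\alt{X}^i}[<N] = \tower{X^i}[<N]\), and if in addition \(M^i < N\) this forces \(\ord(\alt{X}^i) = M^i\) (so \(\alt{X}^i\) is non-terminal).

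The key preliminary step is to extract from the move \(X^i \to X^{i+1}\) a fact that survives passage to the congruent position. Since \(\lambda(X^{i+1}) \subset \lambda(X^i) \subset \lambda(X^0)\), we have \(e^i_L = 0\) for all large \(L\); combining (\ref{orgtarget18}) with the hypothesis \(\tower{X^{i+1}}[\ge N] = \tower{X^i}[\ge N]\) gives \(e^i_L = p\, e^i_{L+1}\) for every \(L \ge N\), and hence \(e^i_L = 0\) for all \(L \ge N\). Plugging \(e^i_N = 0\) into the formula (\ref{orgtarget49}) of Lemma \ref{orgtarget48} first forces \(M^i < N\) (otherwise \(e^i_N = -p^{M^i - N} \neq 0\)), and then, in the regime \(L = N \ge M^i + 1\), yields
\[
 \msize{X^i_{\pexp{(x^i - p^{M^i})}[<N]}} + \delta_{\pexp{x^i}[M^i]} \cdots \delta_{\pexp{x^i}[N - 1]} = \msize{X^i_{\pexp{x^i}[<N]}} - 1.
\]

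For the inductive step, suppose \(\alt{X}^i \equiv X^i \pmod{p^N}\). Then \(\ord(\alt{X}^i) = M^i < N\) and \(\msize{\alt{X}^i_R} = \msize{X^i_R}\) for all \(R \in \ZZ_p^N\), so by the displayed identity \(\ccS = \pexp{x^i}[<N]\) satisfies the hypothesis (\ref{orgtarget52}) of Lemma \ref{orgtarget51} for \(\alt{X}^i\) with \(\ccH = \ccM = M^i\) and \(\ccN = N\). That lemma produces an option \(\alt{X}^{i+1} = \alt{X}^i \cup \Set{\salt{x}^i - p^{M^i}} \setminus \Set{\salt{x}^i}\) of \(\alt{X}^i\) with \(\pexp{\salt{x}^i}[<N] = \pexp{x^i}[<N]\) and \(\tower{\alt{X}^{i+1}}[\ge N] \succeq \tower{\alt{X}^i}[\ge N]\). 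Since the deleted element has the prescribed residue, (\ref{orgtarget54}) gives \(\alt{X}^{i+1} \equiv X^{i+1} \pmod{p^N}\), closing the induction on the invariant. Finally, \(\alt{X}^{i+1}\) is a \(p^*\)-option of \(\alt{X}^i\): the congruence \(\tower{\alt{X}^{i+1}}[M^i] \equiv \tower{\alt{X}^i}[M^i] - 1 \pmod p\) is read off from (\ref{orgtarget49}) at \(L = M^i\), and \(\tower{\alt{X}^{i+1}}[\ge M^i+1] \succeq \tower{\alt{X}^i}[\ge M^i+1]\) follows by a short case split on whether the top index where \(\tower{\alt{X}^{i+1}}\) and \(\tower{\alt{X}^i}\) differ is \(\ge N\) or \(<N\): in the first case one uses \(\tower{\alt{X}^{i+1}}[\ge N] \succeq \tower{\alt{X}^i}[\ge N]\) directly; in the second, one uses Lemma \ref{orgtarget38} to replace the \(\alt{X}\)-towers by the \(X\)-towers below \(N\), notes that \(\tower{X^{i+1}}\) and \(\tower{X^i}\) agree from level \(N\) on (the hypothesis), and invokes \(\tower{X^{i+1}}[\ge M^i+1] \succeq \tower{X^i}[\ge M^i+1]\), valid because \(X^{i+1}\) is a \(p^*\)-option of \(X^i\).

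I expect the main obstacle to be the preliminary step: recognising that the hypothesis \(\tower{X^i}[\ge N] = \tower{X^0}[\ge N]\) forces \(e^i_L = 0\) for \(L \ge N\), which simultaneously pins down \(M^i < N\) and shows that the residue deleted by the original move is \emph{borrowable} in \(X^i\) in exactly the sense (\ref{orgtarget52}) required by Lemma \ref{orgtarget51} — after which the bookkeeping (\ref{orgtarget54}) transports everything to \(\alt{X}^i\). The only other delicate point is the last one, upgrading the conclusion of Lemma \ref{orgtarget51} (an option with \(\tower{\cdot}[\ge N] \succeq \tower{\cdot}[\ge N]\)) to a genuine \(p^*\)-option, which again rests on the tower-stability hypothesis above level \(N\). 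Everything else is routine tracking of the \(\prec\)-order together with Lemmas \ref{orgtarget48}, \ref{orgtarget51} and \ref{orgtarget38}.
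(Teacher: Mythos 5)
Your proposal is correct and follows essentially the same route as the paper: induct along the path, use the hook-difference formula of Lemma \ref{orgtarget48} together with the tower-stability hypothesis to get \(\ord(X^i) < N\) and the strict inequality (\ref{orgtarget52}) at level \(N\), transport it to \(\alt{X}^i\) via the congruence, apply Lemma \ref{orgtarget51} to produce the matching move, and upgrade it to a \(p^*\)-option using Lemma \ref{orgtarget38} and the agreement of towers above level \(N\). The only cosmetic difference is that the paper gets \(\hook{X^{i+1}}[N] = \hook{X^i}[N]\) and \(M^i < N\) directly from the tower equality, whereas you derive them through the recursion \(e^i_L = p\,e^i_{L+1}\); the content is identical.
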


\begin{proof}
 \comment{Proof.}
\label{sec:orgheadline98}
The proof is by induction on \(\ccn\).
If \(\ccn = 0\), then there is nothing to prove. 
Suppose that \(\ccn > 0\).
By the induction hypothesis, there exists a \(p^*\)-path \((\alt{X}^{0}, \ldots, \alt{X}^{\ccn - 1})\) 
congruent to \((X^0, \ldots, X^{\ccn - 1})\) modulo \(p^N\) such that \(\ord(\alt{X}^{i}) = \ord(X^i)\) for \(0 \le i \le \ccn - 2\).
Let \(X = X^{\ccn - 1},\; \alt{X} = \alt{X}^{\ccn - 1}\), and \(Y = X^\ccn = X \cup \Set{x - p^M} \setminus \Set{x}\).

We first show that \(\ord(\alt{X}) = \ord(X) = M\).
Since \(\alt{X} \equiv X \pmod{p^N}\), it follows from Lemma \ref{orgtarget38} that \(\tower{\alt{X}}[<N] = \tower{X}[<N]\).
Moreover, since \(\tower{X}[\ge N] = \tower{Y}[\ge N]\),
we have \(M < N\), and so \(\ord(\alt{X}) = \ord(X) = M\).

We next construct a \(p^*\)-option \(\alt{Y}\) of \(\alt{X}\) with \(\alt{Y} \equiv Y \pmod{p^N}\).
Since \(\hook{Y}[N] = \hook{X}[N]\), it follows from Lemma \ref{orgtarget48} that
\begin{align*}
\size{\alt{X}_{\pexp{(x - p^M)}[< N]}} + \delta_{\pexp{x}[M]} \cdots \delta_{\pexp{x}[N - 1]} + 1 &= \size{X_{\pexp{(x - p^M)}[< N]}} + \delta_{\pexp{x}[M]} \cdots \delta_{\pexp{x}[N - 1]} + 1 \\
 &= \msize{X_{\pexp{x}[< N]}}  = \msize{\alt{X}_{\pexp{x}[< N]}}.
\end{align*}
Lemma \ref{orgtarget51} implies that \(\alt{X}\) has an option \(\alt{Y}\) such that \(\tower{\alt{Y}}[\ge N] \succeq \tower{\alt{X}}[\ge N]\)
and \(\alt{Y} = \alt{X} \cup \set{\salt{x} - p^M} \setminus \set{\salt{x}}\) for some \(\salt{x} \in \alt{X}\) with \(\pexp{\salt{x}}[<N] = \pexp{x}[<N]\).
Since \(\alt{X} \equiv X \pmod{p^N}\), we have \(\alt{Y} \equiv Y \pmod{p^N}\).
It remains to verify \(\tower{\alt{Y}}[\ge \ccM + 1] \succeq \tower{\alt{X}}[\ge \ccM + 1]\).
Recall that 
\[
  \tower{Y}[\ge N] = \tower{X}[\ge N], \quad
  \tower{Y}[\ge M + 1] \succeq \tower{X}[\ge M + 1], \quad \tand \
  \tower{\alt{X}}[<N] = \tower{X}[<N].
\]
In addition, since \(\alt{Y} \equiv Y \pmod{p^N}\), it follows that \(\tower{\alt{Y}}[<N] = \tower{Y}[<N]\).
This shows that \(\tower{\alt{Y}}[\ge M + 1] \succeq \tower{\alt{X}}[\ge M + 1]\),
and therefore \(\alt{Y}\) is a \(p^*\)-option of \(\alt{X}\).
 
\end{proof}
\section{Proof of Lemma \ref{orgtarget41}}
\label{sec:orgheadline117}
\label{orgtarget42}
\subsection{Outline}
\label{sec:orgheadline106}
\comment{Setup}
\label{sec:orgheadline101}
Let \(X\) be a position whose peak digit \(K\) is positive.
Let \(Z\) be a \(p^*\)-descendant of \(X\) such that 
\(\tower{Z}[K] \ge \tower{Y}[K]\) for every \(p^*\)-descendant \(Y\) of \(X\).
Then Corollary \ref{orgtarget40} yields \(\tower{Z}[\ge K + 1] = \tower{X}[\ge K + 1]\).
As we have seen in Subsection \ref{orgtarget44}, \(Z\) has a \((K - 1)\)-rounded descendant \(Y\).
The choice of \(Z\) implies that \(K > \pk(Z) \ge \pk(Y)\).
Thus \(\pk(Y) = -1\) and \(\tower{Y}[\ge K] = \tower{Z}[\ge K]\).
If \(\tower{Y}[K] < p\), then \(Y\) has the stated properties.
If \(\tower{Y}[K] > p\), then \(Y\) has a desired descendant by Lemma \ref{orgtarget46}.
Suppose that \(\tower{Y}[K] = p\).
We may assume that \(\ssize{X} \equiv 0 \pmod{p^K}\) and \(Y\) does not satisfy (P0).

\comment{Goal is to find \alt{Y}}
\label{sec:orgheadline102}
Our goal is to find a \((K - 1)\)-rounded descendant \(\alt{Y}\) of \(X\) satisfying \(\tower{\alt{Y}}[K] = p\) and (P0).
Let \(\alt{Y}\) be a \((K - 1)\)-rounded descendant of \(X\) with \(\tower{\alt{Y}}[K] = p\).
The position \(\alt{Y}\) satisfies (P0) if there does not exist \(\ccS \in \ZZ_p^K\) such that
\begin{equation}
\label{orgtarget56}
 \tower{\alt{Y}_{\ccR}}[0] = \begin{cases}
 1 & \tif \ccR \in \Set{\ccS, \ccS \ominus 1, \ldots, \ccS \ominus (p - 1)}, \\
 0 & \tif \ccR \in \ZZ_p^K \setminus \Set{\ccS, \ccS \ominus 1, \ldots, \ccS \ominus (p - 1)}. \\
 \end{cases}
\end{equation}
Indeed, suppose that \(\alt{Y}\) does not satisfy (P0).
Then \(\tower{\alt{Y}_{\ccR \ominus 1}}[0] = \tower{\alt{Y}_\ccR}[0]\) for each \(\ccR \in \ZZ_p^K\), and so there exists \(\ccS \in \ZZ_p^K\) satisfying  (\ref{orgtarget56}).

\comment{p* Path}
\label{sec:orgheadline103}
Let \(X^0 = X\) and \(X^\ccn = Y\). 
Let \((X^0, \ldots, X^\ccn)\) be a \(p^*\)-path from \(X^0\) to \(X^\ccn\) through \(Z\),
that is, \(X^\cch = Z\) for some \(\cch\) with \(0 < \cch \le \ccn\).
We may assume that \(\tower{X^{\cch - 1}}[K] < p\).

We divide into two cases.

\comment{If some tower \(\ge\) 2}
\label{sec:orgheadline104}
\noindent
\textbf{Case 1: \(\tower{X^j_\ccS}[0] \ge 2\) for some \(j \in \Set{\cch, \cch + 1, \ldots, \ccn}\) and some \(\ccS \in \ZZ_p^K\).} 
We show that \(Z\) has a \((K - 1)\)-rounded descendant \(\alt{Y}\) such that \(\tower{\alt{Y}_\ccT}[0] \ge 2\) for some \(\ccT \in \ZZ_p^K\).
In particular, \(\alt{Y}\) satisfies (P0).

\vspace{0.8em}

\comment{If towers \(\le\) 2}
\label{sec:orgheadline105}
\noindent
\textbf{Case 2: \(\tower{X^i_\ccR}[0] \le 1\) for each \(i \in \Set{\cch, \cch + 1, \ldots, \ccn}\) and each \(\ccR \in \ZZ_p^K\).}
We show that \(X^{\cch - 1}\) has another \(p^*\)-option \(\alt{Z}\) such that 
\begin{align*}
 \bigl(\tower{\alt{Z}_\ccS}[0],\, \tower{\alt{Z}_{\ccT}}[0]\bigr) &= \bigl(\tower{Z_{\ccT}}[0],\, \tower{Z_{\ccS}}[0]\bigr) = (0, 1) \quad \tforsome \ccS, \ccT \in \ZZ_p^K, \\
 \tower{\alt{Z}_{\ccR}}[0]& = \tower{Z_{\ccR}}[0] \quad \tforeach \ccR \in \ZZ_p^K \setminus \Set{\ccS, \ccT}.
\end{align*}
Using Lemma \ref{orgtarget55}, we then show that
\(\alt{Z}\) has a \((K - 1)\)-rounded descendant \(\alt{Y}\) such that
\begin{align*}
 \bigl(\tower{\alt{Y}_\ccU}[0],\, \tower{\alt{Y}_{\ccV}}[0]\bigr) &= \bigl(\tower{Y_{\ccV}}[0],\, \tower{Y_{\ccU}}[0]\bigr) = (0, 1) \quad \tforsome \ccU, \ccV \in \ZZ_p^K, \\
 \tower{\alt{Y}_{\ccR}}[0]& = \tower{Y_{\ccR}}[0] \quad \tforeach \ccR \in \ZZ_p^K \setminus \Set{\ccU, \ccV}.
\end{align*}
In particular, \(\alt{Y}\) satisfies (P0).

\subsection{\((\tower{X_{S}}[0], \tower{X_{S - p^M}}[0])\) and \((\tower{Y_{S}}[0], \tower{Y_{S - p^M}}[0])\)}
\label{sec:orgheadline113}
Let \(X\) be a position and \(Y\) its \(p^*\)-option \(X \cup \set{x - p^M} \setminus \set{x}\).
Let \(L \in \NN\) and \(S = \pexp{x}[<L]\).
For each \(R \in \ZZ_p^L \setminus \set{S, S - p^M}\), we have \(X_R = Y_R\), and so \(\tower{X_R}[0] = \tower{Y_R}[0]\).
In this subsection, we examine \((\tower{X_{S}}[0], \tower{X_{S - p^M}}[0])\) and \((\tower{Y_{S}}[0], \tower{Y_{S - p^M}}[0])\).

We begin by presenting a lower bound for \(\pk(X)\).
\begin{lem}
 \comment{Lem. bound-peak-digits}
\label{sec:orgheadline107}
\label{orgtarget57}
Let \(X\) be a position with order \(M\),
and let \(N\) be an integer with \(N \ge M + 1\).
If \(\ssize{X_{\ccS - p^M}} + \delta_{\ccS_M} \delta_{\ccS_{M + 1}} \cdots \delta_{\ccS_{N - 1}} + 1 < \ssize{X_{\ccS}}\) for some \(\ccS \in \ZZ_p^N\),
then \(N \le \pk(X)\).
 
\end{lem}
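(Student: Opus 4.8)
The plan is to produce, from the hypothesis, a $p^*$-descendant $Y$ of $X$ with $\tower{Y}[\ge N]\succ\tower{X}[\ge N]$. Such a $Y$ already forces $\pk(X)\ge N$: if $d$ is the largest index at which $\tower{Y}[\ge N]$ and $\tower{X}[\ge N]$ disagree, then $d\ge N$, $\tower{Y}[d]>\tower{X}[d]$, and $\tower{Y}[L]=\tower{X}[L]$ for every $L>d$, so $\tower{Y}\succ\tower{X}$ with $\sL(\tower{X},\tower{Y})=d\ge N$, hence $\pk(X)\ge N$. I would prove the existence of such a $Y$ by induction on $\size{\lambda(X)}$; the base cases $\size{\lambda(X)}\le1$ are vacuous, since no gap hypothesis can then hold. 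Within the inductive step, write $M=\ord(X)$ and split on the values of $N$ and $M$.

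If $N=M+1$, the hypothesis $\ssize{X_{\ccS-p^M}}+\delta_{\ccS_M}+1<\ssize{X_{\ccS}}$ is strictly stronger than hypothesis~(\ref{orgtarget52}) of Lemma~\ref{orgtarget51} with $H=M=N-1$. Running the construction in the proof of that lemma yields a coin $x$ with $\pexp{x}[<M+1]=\ccS$ for which $Y:=X\cup\Set{x-p^M}\setminus\Set{x}$ is a $p^*$-option of $X$ and $\hook{Y}[L]\ge\hook{X}[L]$ for every $L\ge M+1$; by Lemma~\ref{orgtarget48} at $L=M+1$ this inequality is in fact strict, since $\hook{Y}[M+1]-\hook{X}[M+1]=\ssize{X_{\ccS}}-\ssize{X_{\ccS-p^M}}-\delta_{\ccS_M}-1\ge1$. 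Feeding $\hook{Y}[L]\ge\hook{X}[L]$ ($L\ge M+1$, strict at $M+1$) into~(\ref{orgtarget18}) at the largest index where $\hook{Y}$ and $\hook{X}$ differ (which is $\ge M+1$) gives $\tower{Y}[\ge M+1]\succ\tower{X}[\ge M+1]$, as needed.

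If $N\ge M+2$ and $M\ge1$, I would descend into one branch of the $p$-core tower. With $s=\ccS_0$ and $\widehat{\ccS}=(\ccS_1,\dots,\ccS_{N-1})\in\ZZ_p^{N-1}$, the identities $(X_s)_T=X_{(s,T)}$ and (valid because $M\ge1$) $\ccS-p^M=(s,\;\widehat{\ccS}-p^{M-1})$ show that the gap hypothesis for $X$ at level $N$ is \emph{verbatim} the gap hypothesis of the lemma for $X_s$ at level $N-1$ — the $\delta$-products match because $(\widehat{\ccS})_j=\ccS_{j+1}$ — provided $\ord(X_s)=M-1$. Granting this, the inductive hypothesis (applicable since $\size{\lambda(X_s)}<\size{\lambda(X)}$) gives $\pk(X_s)\ge N-1$; lifting a witnessing $p^*$-path of $X_s$ to a $p^*$-path of $X$ by Lemma~\ref{orgtarget45} — branch coin-counts are unchanged, so by~(\ref{orgtarget20}) the lift fixes $\tower{X}[0]$ and adds $\tower{Y_s}-\tower{X_s}$ to $\tower{X}[\ge1]$ shifted up one place — produces a $p^*$-descendant $Y$ with $\tower{Y}[\ge N]\succ\tower{X}[\ge N]$, so $\pk(X)\ge N$.

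I expect the two provisos just made to be the main obstacle. First, one knows only $\ord(X_r)\ge M-1$ for all $r$, with equality for some $r$; when $\ord(X_{\ccS_0})>M-1$ the lift above no longer proceeds by $p^{\ord}$-moves. Here I would first apply $p^*$-options of $X$ lifted (via Lemma~\ref{orgtarget45}) from branches $r\ne\ccS_0$ — the nonzero contributions to $\tower{X}[M]$ all come from such branches, and since $M\ge1$ forces $\ccS_0=(\ccS-p^M)_0$, these moves change neither $\ssize{X_{\ccS}}$, $\ssize{X_{\ccS-p^M}}$, nor $X_{\ccS_0}$, so the hypothesis survives — realigning the orders, and then invoke that $\pk$ is nondecreasing along $p^*$-paths. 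Second, $M=0$ leaves no digit to peel off; there I would instead run the Lemma~\ref{orgtarget51} construction with $H=0$ and the given $N$, obtaining an option (not yet a $p^*$-option) $Y=X\cup\Set{x-1}\setminus\Set{x}$ with $\pexp{x}[<N]=\ccS$, $\hook{Y}[L]\ge\hook{X}[L]$ for $L\ge N$ and strict at $L=N$; whether $Y$ is already a $p^*$-option turns only on digits $1,\dots,N-1$ of $\tower{Y}$ versus $\tower{X}$, and the remaining task is to repair those via a short $p^*$-path by a secondary induction on $N$ that feeds back into the statement for branches of order $0$. Making these two reductions rigorous is the crux of the argument.
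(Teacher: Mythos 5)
Your argument is complete only in the case \(N = M + 1\); for \(N \ge M + 2\) (including the case \(M = 0\), \(N \ge 2\)) you explicitly leave the two reductions unproven, and those cases are the bulk of the statement, so as it stands the proposal has a genuine gap. Moreover, the detour is unnecessary, and the perceived obstacle comes from treating \(\succeq\) as a digit-wise comparison: the order \(\prec\) is decided at the \emph{most significant} differing digit. The paper's proof is one step, valid for every \(N \ge M + 1\): the hypothesis (with its extra \(+1\)) is stronger than (\ref{orgtarget52}) with \(\ccH = \ccM\), so Lemma \ref{orgtarget51} yields an option \(Y = X \cup \Set{x - p^M} \setminus \Set{x}\) with \(\pexp{x}[<N] = \ccS\) and \(\tower{Y}[\ge N] \succeq \tower{X}[\ge N]\); Lemma \ref{orgtarget48} at \(L = N\) gives \(\hook{Y}[N] - \hook{X}[N] = \ssize{X_{\ccS}} - \ssize{X_{\ccS - p^M}} - \delta_{\ccS_M}\cdots\delta_{\ccS_{N-1}} - 1 > 0\), which rules out \(\tower{Y}[\ge N] = \tower{X}[\ge N]\), hence \(\tower{Y}[\ge N] \succ \tower{X}[\ge N]\) with the top differing digit at some index \(\ge N \ge M + 1\). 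That single fact makes the digits \(M+1, \ldots, N-1\) irrelevant: it already gives \(\tower{Y}[\ge M + 1] \succ \tower{X}[\ge M + 1]\), so \(Y\) is a \(p^*\)-option with \(\tower{Y} \succ \tower{X}\) and \(\sL(\tower{X}, \tower{Y}) \ge N\), whence \(\pk(X) \ge N\). So your "repair of digits \(1, \ldots, N-1\)" for \(M = 0\) is a non-issue, and no induction or descent into the core tower is needed.

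Beyond being unnecessary, the descent you sketch for \(N \ge M + 2\), \(M \ge 1\) would be hard to rescue as stated. The realignment step is circular with respect to the hypothesis: the inequality you preserve involves \(\ccS - p^M\) and the product \(\delta_{\ccS_M}\cdots\delta_{\ccS_{N-1}}\) for the \emph{original} order \(M\), but after the lifted moves raise the order to some \(M' > M\) the lemma you want to apply at the new position needs the inequality with \(\ccS - p^{M'}\) and \(\delta_{\ccS_{M'}}\cdots\delta_{\ccS_{N-1}}\), which is a different statement. Likewise, lifting a whole \(p^*\)-path of \(X_{\ccS_0}\) via Lemma \ref{orgtarget45} requires at every step that the branch being moved has order exactly one less than the current order of the ambient position, which can fail partway along the path; Lemma \ref{orgtarget55} (congruent paths), not Lemma \ref{orgtarget45} alone, is the tool the paper uses when such lifting issues actually arise, and here they never do.
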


\begin{proof}
 \comment{Proof.}
\label{sec:orgheadline108}
By Lemma \ref{orgtarget51},
\(X\) has an option \(Y\) such that \(\tower{Y}[\ge N] \succeq \tower{X}[\ge N]\)
and \(Y = X \cup \Set{x - p^M} \setminus \Set{x}\) for some \(x \in X\) with \(\pexp{x}[<N] = \ccS\).
Moreover, Lemma \ref{orgtarget48} implies that
\begin{align*}
 \hook{Y}[N] - \hook{X}[N] &= \msize{X_{\pexp{x}[<N]}} - \msize{X_{\pexp{(x - p^M)}[<N]}} - \delta_{x_M} \delta_{x_{M + 1}} \cdots \delta_{x_{N - 1}} - 1 > 0.
\end{align*}
Hence \(Y\) is a \(p^*\)-option of \(X\) with \(\tower{Y} \succ \tower{X}\) and \(\sL(\tower{Y}, \tower{X}) \ge N\).
Therefore \(N \le \pk(X)\).
 
\end{proof}

\begin{lem}
 \comment{Lem. over-two}
\label{sec:orgheadline109}
\label{orgtarget58}
Let \(X\) be a position
and \(Y\) its \(p^*\)-option \(X \cup \Set{x - p^M} \setminus \Set{x}\).
Let \(N\) be a non-negative integer with \(N \ge \max \set{M + 1, \pk(X)}\),
and let \(\ccS = \pexp{x}[<N]\).
If \(\tower{Y_{\ccS}}[0] + \tower{Y_{\ccS - p^M}}[0] \ge 2\),
then \(X\) has a \(p^*\)-option \(\alt{Y}\) such that
\begin{enumerate}
\item \(\alt{Y} \equiv Y \pmod{p^N}\),
\item \(\tower{\alt{Y}_{\ccS}}[0] \ge 2\) or \(\tower{\alt{Y}_{\ccS - p^M}}[0] \ge 2\).
\end{enumerate}
 
\end{lem}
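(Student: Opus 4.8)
\emph{Proof proposal.} The plan is to realize every $p^*$-option $\alt Y$ of $X$ with $\alt Y \equiv Y \pmod{p^N}$ as the result of moving some coin of $X$ lying in the level-$N$ residue class $S$ by $p^M$, and to show that the two quantities in (2) depend only on a single $p$-adic digit of the chosen coin, which can then be optimized. Write $a = \pexp{x}[\ge N]$ and $\delta = \delta_{\pexp{x}[M]}\cdots\delta_{\pexp{x}[N-1]}$, so that $Y_S = X_S \setminus \Set{a}$ and $Y_{S-p^M} = X_{S-p^M}\cup\Set{a-\delta}$. For any coin $\alt x\in X$ with $\pexp{\alt x}[<N] = S$, the position $\alt Y = X\cup\Set{\alt x - p^M}\setminus\Set{\alt x}$ changes, at level $N$, only the block of residue $S$ (size $-1$) and that of residue $S - p^M$ (size $+1$), exactly as $Y$ does, so $\alt Y \equiv Y \pmod{p^N}$ automatically; moreover, with $a' = \pexp{\alt x}[\ge N]$, one has $\alt Y_S = X_S \setminus \Set{a'}$ and $\alt Y_{S-p^M} = X_{S-p^M}\cup\Set{a' - \delta}$, the borrow digit $\delta$ being the same for all such $\alt x$. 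So it suffices to choose $\alt x$ suitably and then to check that $\alt Y$ is a genuine $p^*$-option.

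Next I would record how a single bead affects a $0$-th tower digit. Using the formula for $\size{\lambda(W_{(p)})}$ in terms of the block sizes $(\size{W_r})_{r\in\ZZ_p}$ of the $p$-quotient of $W$, removing a bead $b$ from $W$ changes $\tower{W}[0]$ by $-\bigl(p(\size{W_{\pexp{b}[0]}}-1) + \pexp{b}[0] - (\size{W}-1)\bigr)$, while adding a bead $b$ changes it by $p\size{W_{\pexp{b}[0]}} + \pexp{b}[0] - \size{W}$. Hence $\tower{\alt Y_S}[0]$ and $\tower{\alt Y_{S-p^M}}[0]$ depend on $\alt x$ only through the digit $j := \pexp{\alt x}[N]$; writing $u_j,v_j$ for these two values, $j_0 = \pexp{x}[N]$ for the digit of the given option $Y$, $c_r = \size{(X_S)_r}$, and $d_r = \size{(X_{S-p^M})_r}$, the hypothesis reads $u_{j_0} + v_{j_0} \ge 2$. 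If $u_{j_0}\ge 2$ or $v_{j_0}\ge 2$ we take $\alt Y = Y$; hence assume $u_{j_0} = v_{j_0} = 1$.

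Now I would look for a digit $j$ with $c_j \ge 1$ such that $u_j\ge 2$ or $v_j\ge 2$. Since $j,j_0,j',j_0'\in\Set{0,\dots,p-1}$ (where $j' = \pexp{(a'-\delta)}[0]$ and $j_0' = \pexp{(a-\delta)}[0]$), the bead formulas give $u_j\ge u_{j_0}+1 = 2$ as soon as $c_j < c_{j_0}$, or $c_j = c_{j_0}$ and $j < j_0$; and $v_j\ge 2$ as soon as $d_{j'} > d_{j_0'}$, or $d_{j'} = d_{j_0'}$ and $j' > j_0'$. Thus a usable digit exists unless $j_0$ minimizes $(c_j,j)$ over $\Set{j : c_j\ge 1}$ and $j_0'$ maximizes $(d_{j'},j')$ over $\ZZ_p$, both lexicographically. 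This remaining configuration I would eliminate by combining $u_{j_0} = v_{j_0} = 1$ (which pins down $\tower{X_S}[0]$ and $\tower{X_{S-p^M}}[0]$ in terms of the $c_r$, $d_r$) with the nonnegativity of those two core sizes and the inequalities $\size{X_{S'}}\le\size{X_{S'-p^M}} + \delta_{\pexp{S'}[M]}\cdots\delta_{\pexp{S'}[N]} + 1$ for $S'\in\ZZ_p^{N+1}$, which hold because $N\ge\pk(X)$ by Lemma \ref{orgtarget57}.

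Finally, having fixed a usable digit $j$, I would produce the coin and verify the $p^*$-conditions. Applying Lemma \ref{orgtarget51} with $(\ccN,\ccH,\ccS) = (N+1,\,M,\,(S,j))$ — whose hypothesis $\size{X_{(S,j)-p^M}} + \delta\,\delta_j < \size{X_{(S,j)}}$ again follows from the peak-digit bound of Lemma \ref{orgtarget57} together with $c_j\ge 1$ — yields an option $\alt Y = X\cup\Set{\alt x - p^M}\setminus\Set{\alt x}$ with $\pexp{\alt x}[<N+1] = (S,j)$ and $\hook{\alt Y}[L]\ge\hook{X}[L]$ for all $L\ge N+1$. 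Since also $\hook{\alt Y}[N] = \hook{Y}[N]$ by Lemma \ref{orgtarget48} (both $\alt Y$ and $Y$ move a coin of residue $S$ by $p^M$), $\tower{\alt Y}[<N] = \tower{Y}[<N]$ by Lemma \ref{orgtarget38}, and $Y$ is a $p^*$-option, a comparison of the two tails gives $\tower{\alt Y}[\ge M+1]\succeq\tower{X}[\ge M+1]$, so $\alt Y$ is a $p^*$-option of $X$ satisfying (1) and (2). I expect the main obstacle to be the last elimination in the previous paragraph — showing that a usable digit always exists when both tower digits equal $1$ — since that is exactly where the assumption $N\ge\pk(X)$, via Lemma \ref{orgtarget57}, must be made to do its work.
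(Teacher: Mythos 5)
Your framework is essentially the paper's: look for another coin \(\salt{x}\) with \(\pexp{\salt{x}}[<N] = \ccS\) whose digit at position \(N\) forces one of the two cores to have size at least \(2\), realize the move via Lemma \ref{orgtarget51} at level \(N+1\), and control the block sizes via Lemma \ref{orgtarget57}; your bead formulas for the change of \(\tower{\cdot}[0]\) are also correct. However, there are two genuine gaps. First, the hypothesis of Lemma \ref{orgtarget51} for the residue \((\ccS, j)\), namely \(\msize{X_{(\ccS,j)-p^M}} + \delta\,\delta_j < \msize{X_{(\ccS,j)}}\), does \emph{not} follow from Lemma \ref{orgtarget57} together with \(c_j \ge 1\): applied at level \(N+1 > \pk(X)\), Lemma \ref{orgtarget57} gives the bound in the \emph{opposite} direction, \(\msize{X_{(\ccS,j)}} \le \msize{X_{(\ccS,j)-p^M}} + \delta' + 1\). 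So a digit \(j\) with \(c_j \ge 1\) need not yield a legal move with the required tail condition; the strict inequality must be verified for the particular digit chosen, and this also shrinks the set of ``usable'' digits in your bad-configuration analysis (which, as stated, quantifies over all \(j\) with \(c_j \ge 1\)).

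Second, and decisively, the elimination of the remaining configuration (the case \(u_{j_0} = v_{j_0} = 1\) with no other digit obviously usable) is only announced, and you yourself flag it as the main obstacle; but that elimination \emph{is} the content of the lemma. In the paper it is carried out as follows: \(\tower{Y_{\ccS}}[0] = \tower{Y_{\ccS - p^M}}[0] = 1\) forces \((Y_{\ccS})_{(p)}\) and \((Y_{\ccS - p^M})_{(p)}\) to be single-box positions, which, combined with \(\msize{X_{\ccS}} = \msize{X_{\ccS - p^M}} + \Delta + 1\) where \(\Delta = \tower{Y}[N] - \tower{X}[N] \ge 0\), the bound \(\msize{X_{(\ccS,r)}} \le \msize{X_{(\ccS - p^M,r)}} + 1\) from Lemma \ref{orgtarget57} (after shifting so that \(\pexp{x}[M] \neq 0\)), and the equality \(\hook{Y}[N+1] = \hook{X}[N+1]\), pins \(\Delta\) to \(\Set{0,1,2}\); the case \(\Delta = 1\) and the case \(p = 2\) lead to contradictions, and for \(\Delta \in \Set{0,2}\), \(p > 2\) one exhibits the explicit digit \(t - 1\) with \(t = \pexp{\ssize{Y_{\ccS - p^M}}}[0]\), for which the strict inequality required by Lemma \ref{orgtarget51} does hold and the resulting option has \(\tower{\alt{Y}_{\ccS}}[0] \ge 2\) or \(\tower{\alt{Y}_{\ccS - p^M}}[0] \ge 2\). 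Without this case analysis (or an equivalent argument producing a concrete digit together with the legality inequality), your proposal does not establish the lemma.
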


\begin{proof}
 \comment{Proof.}
\label{sec:orgheadline110}
We may assume that \(\tower{Y_\ccS}[0] = \tower{Y_{\ccS - p^M}}[0] = 1\).
Then 
\begin{equation}
\label{orgtarget59}
 (Y_\ccS)_{(p)} = \pshift{\set{1}}{\ssize{Y_\ccS} - 1} \quad \tand \quad (Y_{\ccS - p^M})_{(p)} = \pshift{\set{1}}{\ssize{Y_{\ccS - p^M}} - 1}.
\end{equation}
Replacing \(X\) by \(\pshift{X}{p^M}\), if necessary, we may assume that \(\pex{x}{M} \neq 0\).

We first consider \(\ssize{Y_{\ccS}}\) and \(\ssize{Y_{\ccS - p^M}}\).
Since \(N \ge \max \set{\ccM + 1, \pk(X)}\), it follows from Lemma \ref{orgtarget39} that \(\tower{Y}[\ge N + 1] = \tower{X}[\ge N + 1]\).
Thus \(\tower{Y}[N] - \tower{X}[N] = \hook{Y}[N] - \hook{X}[N] \ge 0\).
Let \(\Delta = \tower{Y}[N] - \tower{X}[N]\).
By Lemma \ref{orgtarget48}, \(\ssize{X_\ccS} = \ssize{X_{\ccS - p^M}} + \Delta + 1\),
and therefore
\begin{equation}
\label{orgtarget60}
 \msize{Y_\ccS} = \msize{Y_{\ccS - p^M}} + \Delta - 1.
\end{equation}
Furthermore, since \(\tower{Y_{\ccS}}[0] + \tower{Y_{\ccS - p^M}}[0] = 2\) and \(\Delta \ge 0\),
it follows that \(\Delta \in \Set{0, 1, 2}\).

We next consider \(\ssize{Y_{(\ccS, \ccr)}}\) and \(\ssize{Y_{(\ccS - p^M, \ccr)}}\) for each \(\ccr \in \ZZ_p\). 
Lemma \ref{orgtarget57} yields \(\ssize{X_{(\ccS, \ccr)}} \le \ssize{X_{(\ccS, \ccr) - p^M}} + 1 = \ssize{X_{(\ccS - p^M, \ccr)}} + 1\),
and so 
\begin{equation}
\label{orgtarget61}
 \msize{Y_{(\ccS, \ccr)}} \le \msize{Y_{(\ccS - p^M, \ccr)}} + 1.
\end{equation}
Since \(\hook{Y}[N + 1] = \hook{X}[N + 1]\), we have
\(\ssize{X_{(\ccS, x_N)}} = \ssize{X_{(\ccS - p^M, x_N)}} + 1\),
and hence 
\begin{equation}
\label{orgtarget62}
 \msize{Y_{(\ccS, x_N)}} = \msize{Y_{(\ccS - p^M, x_N)}} - 1.
\end{equation}

Let \(\cct = \pexp{\ssize{Y_{\ccS - p^M}}}[0]\).
We split into three cases depending on \(\Delta\).

\noindent
\textbf{Case 1: \(\Delta = 0\).} 
We have \(\ssize{Y_{\ccS}} = \ssize{Y_{\ccS - p^M}} - 1\).
If \(p = 2\), then by (\ref{orgtarget59}), \(\ssize{Y_{(\ccS, \cct)}} = \ssize{Y_{(\ccS - p^M, \cct)}} - 2\) and
\(\ssize{Y_{(\ccS, \cct - 1)}} = \ssize{Y_{(\ccS - p^M, \cct - 1)}} + 1\), which contradicts (\ref{orgtarget62}).
Suppose that \(p > 2\). By (\ref{orgtarget59}),
\[
 \msize{Y_{(\ccS, \ccr)}} = \msize{Y_{(S - p^M, \ccr)}} + \begin{cases}
   -1 & \tif \ccr \in \Set{\cct, \cct - 2}, \\
   1 & \tif \ccr = \cct - 1,\\
   0 & \tif \ccr \in \ZZ_p \setminus \Set{\cct, \cct - 1, \cct - 2}.
 \end{cases}
\]
Thus \(x_N \in \set{\cct, \cct - 2}\) and \(\ssize{X_{(S, t - 1)}} = \ssize{X_{(S, t - 1) - p^M}} + 1\).
By Lemma \ref{orgtarget51}, \(X\) has an option \(\alt{Y}\) such that \(\tower{\alt{Y}}[\ge N + 1] \succeq \tower{X}[\ge N + 1]\) and \(\alt{Y} = X \cup \Set{\salt{x} - p^M} \setminus \Set{\salt{x}}\)
for some \(\salt{x} \in X\) with \(\pexp{\salt{x}}[< N+1] = (S, \cct - 1)\).
Since \(\pexp{\salt{x}}[N] = \cct - 1\), we find that
 \((\tower{\alt{Y}_{\ccS}}[0], \tower{\alt{Y}_{\ccS - p^M}}[0]) = (2, 0)\) if \(x_N = \cct\) and \((\tower{\alt{Y}_{\ccS}}[0], \tower{\alt{Y}_{\ccS - p^M}}[0]) = (0, 2)\) if \(x_N = \cct - 2\).
It remains to show that \(\alt{Y}\) is a \(p^*\)-option of \(X\) with \(\alt{Y} \equiv Y \pmod{p^N}\).
Since \(\hook{\alt{Y}}[N] - \hook{X}[N] = \ssize{X_S} - \ssize{X_{S - p^M}} - 1 = 0\),
we have \(\tower{\alt{Y}}[\ge N] \succeq \tower{X}[\ge N] = \tower{Y}[\ge N]\).
Since \(\pexp{\salt{x}}[<N] = \pexp{x}[<N]\), it follows that \(\alt{Y} \equiv Y \pmod{p^N}\), and so \(\tower{\alt{Y}}[< N] = \tower{Y}[< N]\).
Hence \(\tower{\alt{Y}}[\ge M + 1] \succeq \tower{Y}[\ge M + 1] \succeq \tower{X}[\ge M + 1]\). 

\noindent
\textbf{Case 2: \(\Delta = 1\).} 
We have \(\ssize{Y_{\ccS}} = \ssize{Y_{\ccS - p^M}}\), and so
\(\ssize{Y_{(\ccS, \ccr)}} = \ssize{Y_{(\ccS - p^M, \ccr)}}\) for each \(\ccr \in \ZZ_p\),
which is a contradiction.

\noindent
\textbf{Case 3: \(\Delta = 2\).} 
We have \(\ssize{Y_{\ccS}} = \ssize{Y_{\ccS - p^M}} + 1\).
If \(p = 2\), then by (\ref{orgtarget59}),
\(\ssize{Y_{(\ccS, \cct - 1)}} = \ssize{Y_{(\ccS - p^M, \cct - 1)}} + 2\), which contradicts (\ref{orgtarget61}).
Suppose that \(p > 2\). By (\ref{orgtarget59}), we have
\[
 \msize{Y_{(\ccS, \ccr)}} = \msize{Y_{(S - p^M, \ccr)}} + \begin{cases}
   -1 & \tif \ccr = \cct,\\
   1 & \tif \ccr \in \Set{\cct - 1, \cct + 1}, \\
   0 & \tif \ccr \in \ZZ_p \setminus \Set{\cct, \cct - 1, \cct + 1}.
 \end{cases}
\]
Thus \(x_N = \cct\).
By Lemma \ref{orgtarget51},
\(X\) has a \(p^*\)-option \(\alt{Y}\) such that 
\(\alt{Y} = X \cup \Set{\salt{x} - p^M} \setminus \Set{\salt{x}}\) for some \(\salt{x} \in X\) with
\(\pexp{\salt{x}}[<N + 1] = (S, t - 1)\).
It follows that \(\alt{Y} \equiv Y \pmod{p^N}\) and \(\tower{\alt{Y}_{\ccS}}[0] = 2\).
 
\end{proof}

\begin{lem}
 \comment{Lem. equal-one}
\label{sec:orgheadline111}
\label{orgtarget63}
Let \(X, Y, M, N\), and \(\ccS\) be as in Lemma \ref{orgtarget58}.
Let \(\Delta = \tower{Y}[N] - \tower{X}[N]\).
Suppose that \(\tower{Y_{\ccS}}[0] + \tower{Y_{\ccS - p^M}}[0] = 1\).
\begin{enumerate}
\renewcommand{\labelenumi}{\textnormal{(\arabic{enumi})}}
\item If $\Delta = 1$, then $X$ has a \(p^*\)-option $\alt{Y}$
 with $\alt{Y} \equiv Y \pmod{p^N}$ and $\bigl(\tower{\alt{Y}_\ccS}[0], \tower{\alt{Y}_{\ccS - p^M}}[0]\bigr) = \bigl(\tower{Y_{\ccS - p^M}}[0], \tower{Y_\ccS}[0]\bigr)$.
\item If $\Delta = 0$, then $\bigl(\tower{X_\ccS}[0], \tower{X_{\ccS - p^M}}[0]\bigr) = \bigl(\tower{Y_{\ccS - p^M}}[0], \tower{Y_\ccS}[0]\bigr)$.
\end{enumerate}
 
\end{lem}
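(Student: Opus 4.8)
The plan is to translate both parts into elementary statements about residue‑count vectors of positions. The key fact I use is the following description, immediate from (\ref{orgtarget15}) by computing $\lambda(W_{(p)})$: writing $\mathrm{st}(k)\in\NN^p$ for the vector of residue counts of $\Set{0,1,\ldots,k-1}$ (so $\mathrm{st}(k)=\mathrm{st}(k-1)+e_{(k-1)\bmod p}$, where $e_r$ is the $r$th unit vector), a position $W$ satisfies $\tower{W}[0]=0$ exactly when $(\size{W_r})_{r\in\ZZ_p}=\mathrm{st}(\size{W})$, and $\tower{W}[0]=1$ exactly when $(\size{W_r})_{r\in\ZZ_p}=\mathrm{st}_1(\size{W})$, where $\mathrm{st}_1(k):=\mathrm{st}(k-1)+e_{k\bmod p}$. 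As in the proof of Lemma \ref{orgtarget58}, I first replace $X$ by $\pshift{X}{p^M}$ if necessary so that $\pexp{x}[M]\neq 0$; then $\pexp{(x-p^M)}[<N]=\ccS-p^M$, and passing from $X$ to $Y$ merely moves the bead $\pexp{x}[\ge N]$ from runner $\ccS$ to runner $\ccS-p^M$. Writing $c=\pexp{x}[N]$, this says $\size{(Y_{\ccS})_{r}}=\size{(X_{\ccS})_{r}}$ and $\size{(Y_{\ccS-p^M})_{r}}=\size{(X_{\ccS-p^M})_{r}}$ for $r\neq c$, with $\size{(Y_{\ccS})_{c}}=\size{(X_{\ccS})_{c}}-1$, $\size{(Y_{\ccS-p^M})_{c}}=\size{(X_{\ccS-p^M})_{c}}+1$, and $Y_R=X_R$ for all other $R$. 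Since $\tower{X}[N]=\sum_{R\in\ZZ_p^N}\tower{X_R}[0]$ and likewise for $Y$ (by (\ref{orgtarget16})), and $\tower{Y}[N]-\tower{X}[N]=\Delta$, this gives $\tower{Y_{\ccS}}[0]+\tower{Y_{\ccS-p^M}}[0]=\tower{X_{\ccS}}[0]+\tower{X_{\ccS-p^M}}[0]+\Delta$. Hence in case (2) the pair $(\tower{X_{\ccS}}[0],\tower{X_{\ccS-p^M}}[0])$, like the $Y$‑pair, is a permutation of $(0,1)$, and in case (1) it is $(0,0)$.

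For part (2) it is then enough to rule out that the two pairs coincide. If they did, the entry $1$ would occur on the same side for $X$ and $Y$, say $\tower{X_{\ccS}}[0]=\tower{Y_{\ccS}}[0]=1$. Then $(\size{(X_{\ccS})_{r}})_r=\mathrm{st}_1(m)$ and $(\size{(Y_{\ccS})_{r}})_r=\mathrm{st}_1(m-1)$ with $m=\size{X_{\ccS}}$; since the latter is obtained from the former by subtracting $e_c$, this forces $e_c=\mathrm{st}_1(m)-\mathrm{st}_1(m-1)$. But $\mathrm{st}_1(m)-\mathrm{st}_1(m-1)=e_{(m-2)\bmod p}-e_{(m-1)\bmod p}+e_{m\bmod p}$ (with the evident collapses when $p=2$), which is never a single unit vector; the case with the $1$ on the $\ccS-p^M$ side is identical. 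This contradiction proves (2).

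For part (1) we have $\Delta=1$, so $(\size{(X_{\ccS})_{r}})_r=\mathrm{st}(m)$ and $(\size{(X_{\ccS-p^M})_{r}})_r=\mathrm{st}(m-2)$ with $m=\size{X_{\ccS}}=\size{X_{\ccS-p^M}}+2$; moreover $\tower{Y_{\ccS}}[0]+\tower{Y_{\ccS-p^M}}[0]=1$ forces $c\in\Set{(m-1)\bmod p,\,(m-2)\bmod p}$ (these being the only values of $c$ for which both $\mathrm{st}(m)-e_c$ and $\mathrm{st}(m-2)+e_c$ have $\tower{\cdot}[0]\le 1$). I treat $c=(m-2)\bmod p$, so $\tower{Y_{\ccS}}[0]=1$ and $\tower{Y_{\ccS-p^M}}[0]=0$; the other value is handled symmetrically. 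Put $r^*=(m-1)\bmod p\neq c$, so $\size{(X_{\ccS})_{r^*}}=\size{(X_{\ccS-p^M})_{r^*}}+1$. Using $\pexp{x}[M]\neq 0$, Lemma \ref{orgtarget48} reduces the hypothesis of Lemma \ref{orgtarget51} (applied with $H=M$ and the length‑$(N+1)$ prefix $(\ccS,r^*)$) to the inequality $\size{(X_{\ccS-p^M})_{r^*}}<\size{(X_{\ccS})_{r^*}}$, which holds; so $X$ has an option $\alt{Y}=X\cup\Set{\salt{x}-p^M}\setminus\Set{\salt{x}}$ with $\pexp{\salt{x}}[<N+1]=(\ccS,r^*)$ and $\tower{\alt{Y}}[\ge N+1]\succeq\tower{X}[\ge N+1]$. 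Since $\pexp{\salt{x}}[<N]=\ccS$, the move producing $\alt{Y}$ effects the same transfer at level $N$ as the move producing $Y$, so $\alt{Y}\equiv Y\pmod{p^N}$ and hence $\tower{\alt{Y}}[<N]=\tower{Y}[<N]$ by Lemma \ref{orgtarget38}. Lemma \ref{orgtarget48} further gives $\hook{\alt{Y}}[N]=\hook{X}[N]+1$ and $\hook{\alt{Y}}[L]-\hook{X}[L]=-p^{M-L}$ for $L\le M$, whence $\tower{\alt{Y}}[\ge N]\succ\tower{X}[\ge N]$ and $\tower{\alt{Y}}[M]\equiv\tower{X}[M]-1\pmod{p}$; together with $\tower{\alt{Y}}[<N]=\tower{Y}[<N]$ this yields $\tower{\alt{Y}}[\ge M+1]\succeq\tower{X}[\ge M+1]$, so $\alt{Y}$ is a $p^*$‑option of $X$. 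Finally the transfer through sub‑runner $r^*$ gives $(\size{(\alt{Y}_{\ccS})_{r}})_r=\mathrm{st}(m)-e_{r^*}=\mathrm{st}(m-1)$ and $(\size{(\alt{Y}_{\ccS-p^M})_{r}})_r=\mathrm{st}(m-2)+e_{r^*}=\mathrm{st}_1(m-1)$, so $(\tower{\alt{Y}_{\ccS}}[0],\tower{\alt{Y}_{\ccS-p^M}}[0])=(0,1)=(\tower{Y_{\ccS-p^M}}[0],\tower{Y_{\ccS}}[0])$, as required.

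The step I expect to be the main obstacle is the bookkeeping with $\mathrm{st}(m)$ and $\mathrm{st}_1(m)$ — tracking their mutual differences and the separate behaviour when $p=2$ (where several of the residues $(m-2)\bmod p$, $(m-1)\bmod p$, $m\bmod p$ collapse) — and, in part (1), checking that the option supplied by Lemma \ref{orgtarget51} really does satisfy both defining conditions of a $p^*$‑option; the verification $\tower{\alt{Y}}[\ge M+1]\succeq\tower{X}[\ge M+1]$ is exactly the argument already carried out, for $\Delta=0$, inside the proof of Lemma \ref{orgtarget58}.
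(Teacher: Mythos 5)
Your proposal is correct and follows essentially the same route as the paper: after the shift to ensure \(\pexp{x}[M]\neq 0\), you use Lemma \ref{orgtarget48} to get \(\size{X_\ccS}-\size{X_{\ccS-p^M}}=\Delta+1\), read off the residue-count (abacus) structure of \(X_\ccS\) and \(X_{\ccS-p^M}\), identify the two admissible values of \(\pexp{x}[N]\), and apply Lemma \ref{orgtarget51} at level \(N+1\) with the other digit, verifying the \(p^*\)-option conditions exactly as in the paper's Case 1 of Lemma \ref{orgtarget58}. For \(\Delta=0\), where the paper only says ``similar,'' your short contradiction argument (the pairs cannot coincide because \(\mathrm{st}_1(m)-\mathrm{st}_1(m-1)\) is never a unit vector) is a clean instance of the same residue-count bookkeeping.
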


\begin{proof}
 \comment{Proof.}
\label{sec:orgheadline112}
Suppose that \(\Delta = 1\).
Then \(\tower{X_\ccS}[0] = \tower{X_{\ccS - p^M}}[0] = 0\).
Replacing \(X\) by \(\pshift{X}{p^M}\), if necessary, we may assume that \(\pex{x}{M} \neq 0\).
Lemma \ref{orgtarget48} yields \(\ssize{X_\ccS} = \ssize{X_{\ccS - p^M}} + 2\). 
Let \(\cct = \pexp{\ssize{X_{\ccS - p^M}}}[0]\).
Since \((X_\ccS)_{(p)} = \pshift{\emptyset}{\ssize{X_\ccS}}\) and \((X_{\ccS - p^M})_{(p)} = \pshift{\emptyset}{\ssize{X_{\ccS - p^M}}}\),
we have
\[
 \msize{X_{(\ccS, \ccr)}} = \msize{X_{(\ccS - p^M, \ccr)}} + \begin{cases}
   1 & \tif \ccr \in \Set{\cct, \cct + 1},\\
  0 & \tif \ccr \in \ZZ_p \setminus \Set{\cct, \cct + 1}.\\
 \end{cases}
\]
If \(\pex{x}{N} = \cct\), then \(\tower{Y_{\ccS}}[0] = 1\).
If \(\pex{x}{N} = \cct + 1\), then \(\tower{Y_{\ccS - p^M}}[0] = 1\).
It follows from Lemma \ref{orgtarget51} that \(X\) has a desired \(p^*\)-option.

The proof for \(\Delta = 0\) is similar.
 
\end{proof}

\subsection{Proof of Lemma \ref{orgtarget41}}
\label{sec:orgheadline116}
\begin{proof}
 \comment{Proof.}
\label{sec:orgheadline114}
Let \(X, Y, Z, K\), and \((X^0, \ldots, X^\cch, \ldots, X^\ccn)\) be as in the beginning of this section.
Let \(W = X^{\cch - 1}\) and \(Z = W \cup \set{w - p^M} \setminus \set{w}\),
and let \(\ccS = \pexp{w}[<K]\).
Since \(\tower{W}[K] < \tower{Z}[K] = p\) and \(\ord(W) < \pk(W) = K\),
it follows from Lemma \ref{orgtarget58} that 
we may assume that \(\tower{Z_S}[0] + \tower{Z_{S - p^M}}[0] = 1\).
Hence \(\tower{W}[K] = p - 1\) and
\[
 \bigl(\tower{Z_{\ccS}}[0],\, \tower{Z_{\ccS - p^M}}[0]\bigr) \in \Set{(0, 1), (1, 0)}.
\]

By Lemma \ref{orgtarget63}, \(W\) has another \(p^*\)-option \(\alt{Z}\)
with \(\alt{Z} \equiv Z \pmod{p^K}\) such that
\begin{equation}
\label{orgtarget64}
\begin{split}
  \bigl(\tower{\alt{Z}_{\ccS}}[0],\, \tower{\alt{Z}_{\ccS - p^M}}[0]\bigr) &= \bigl(\tower{Z_{\ccS - p^M}}[0],\, \tower{Z_{\ccS}}[0]\bigr),\\
 \tower{\alt{Z}_{\ccR}}[0] &= \tower{Z_{\ccR}}[0] \quad \tforeach \ccR \in \ZZ_p^K \setminus \set{\ccS, \ccS - p^M}.
\end{split}
\end{equation}
Note that \(\tower{\alt{Z}}[K] = p\), and that \(\tower{\alt{Z}}[\ge K + 1] = \tower{X}[\ge K + 1]\) by Corollary \ref{orgtarget40}.
Let \(\alt{X}^\cch = \alt{Z}\).
Since \(\tower{X^i}[\ge K] = \tower{X^\cch}[\ge K]\) for \(\cch \le i \le \ccn\),
it follows from Lemma \ref{orgtarget55} that there exists a \(p^*\)-path \((\alt{X}^\cch, \ldots, \alt{X}^\ccn)\) congruent to \((X^\cch, \ldots, X^\ccn)\) modulo \(p^K\)
such that \(\ord(\alt{X}^i) \equiv \ord(X^i)\) for \(\cch \le i \le \ccn - 1\).

Let \(\alt{Y} = \alt{X}^\ccn\).
Then \(\alt{Y} \equiv Y \pmod{p^K}\) and \(\tower{\alt{Y}}[\ge K] = \tower{\alt{Z}}[\ge K]\).
Thus \(\tower{\alt{Y}}[K] = p,\; \ord(\alt{Y}) = K\), and \(\tower{\alt{Y}}[\ge K + 1] = \tower{X}[\ge K + 1]\).
It remains to show that \(\alt{Y}\) satisfies (P0).
For \(\cch \le i \le \ccn - 1\), let 
\[
 X^{i + 1} = X^i \cup \set{x^i - p^{M^{i}}} \setminus \set{x^i} \quad \tand \quad \alt{X}^{i + 1} = \alt{X}^i \cup \set{\salt{x}^i - p^{M^{i}}} \setminus \set{\salt{x}^i}.
\]
Then \(\pexp{x^i}[<K] = \pexp{\salt{x}^i}[<K]\).
Let \(\ccS^i = \pexp{x^i}[< K]\) for \(\cch \le i \le \ccn - 1\).
For each \(\ccR \in \ZZ_p^k \setminus \set{\ccS^i, \ccS^i - p^{M^i}}\),
we have \(X^{i+1}_\ccR = X^{i}_\ccR\) and \(\alt{X}^{i + 1}_\ccR = \alt{X}^i_\ccR\) .
By Lemma \ref{orgtarget58}, we may assume that
\[
 \bigl(\tower{X^{i}_{\ccS^i}}[0], \tower{X^{i}_{\ccS^i - p^{M^i}}}[0]\bigr),\, \bigl(\tower{\alt{X}^i_{\ccS^i}}[0], \tower{\alt{X}^i_{\ccS^i - p^{M^i}}}[0]\bigr) \in \Set{(0, 0), (0, 1), (1, 0)} \quad \text{for\ } \cch \le i \le \ccn - 1.
\]
It follows from Lemma \ref{orgtarget63} that 
\begin{equation}
\label{orgtarget65}
\begin{split}
 \bigl(\tower{X^{i + 1}_{\ccS^{i}}}[0],\, \tower{X^{i + 1}_{\ccS^i - p^{M^i}}}[0]\bigr) &= \bigl(\tower{X^{i}_{\ccS^i - p^{M^i}}}[0],\, \tower{X^{i}_{\ccS^i}}[0]\bigr),\\
 \bigl(\tower{\alt{X}^{i + 1}_{\ccS^i}}[0],\, \tower{\alt{X}^{i + 1}_{\ccS^i - p^{M^i}}}[0]\bigr) &= \bigl(\tower{\alt{X}^i_{\ccS^i - p^{M^i}}}[0],\, \tower{\alt{X}^i_{\ccS^i}}[0]\bigr).
\end{split}
\end{equation}
By (\ref{orgtarget64}) and (\ref{orgtarget65}),
\begin{align*}
 \bigl(\tower{\alt{Y}_\ccU}[0],\, \tower{\alt{Y}_{\ccV}}[0]\bigr) &= \bigl(\tower{Y_{\ccV}}[0],\, \tower{Y_{\ccU}}[0]\bigr) = (0, 1) \quad \tforsome \ccU, \ccV \in \ZZ_p^K, \\
 \tower{\alt{Y}_{\ccR}}[0]& = \tower{Y_{\ccR}}[0] \quad \tforeach \ccR \in \ZZ_p^K \setminus \Set{\ccU, \ccV}.
\end{align*}
Therefore \(\alt{Y}\) satisfies the condition (P0). 
 
\end{proof}

\comment{bibliography}
\label{sec:orgheadline115}
\bibliography{library}
\end{document}